\documentclass{article}

\usepackage{amsmath}
\usepackage{amssymb}
\usepackage{amsthm}
\usepackage{amsfonts}
\usepackage{mathtools}
\usepackage[english]{babel}
\usepackage{enumitem}
\usepackage{anysize}
\usepackage{natbib}
\usepackage[
colorlinks=true,
linkcolor=blue,
citecolor=blue,
urlcolor=blue
]{hyperref}

\theoremstyle{definition}
\newtheorem{definition}{Definition}[section]
\newtheorem{theorem}[definition]{Theorem}
\newtheorem{lemma}[definition]{Lemma}
\newtheorem{proposition}[definition]{Proposition}

\newtheorem{remark}[definition]{Remark}

\newtheorem{example}[definition]{Example}

\DeclareMathOperator*{\limess}{ess\,lim}
\DeclareMathOperator*{\blim}{\operatorname{B-lim}}

\newcommand{\R}{\mathbb{R}}

\newcommand{\C}[1]{C([0,1];\R^{#1})}
\newcommand{\AC}[1]{W^{1,1}([0,1];\R^{#1})}
\newcommand{\BV}[1]{\operatorname{BV}([0,1];\R^{#1})}

\newcommand{\Linf}[1][]{L^{\infty}([0,1];\R^{#1})}
\newcommand{\Lone}[1][]{L^{1}([0,1];\R^{#1})}
\newcommand{\leb}{m}

\newcommand{\ba}{\operatorname{ba}([0,1],\mathfrak{L},\leb)}
\newcommand{\mc}[1]{\mathcal{#1}}

\date{}

\title{Nondegeneracy Conditions for Control Problems with
	Nonregular Mixed Constraints }
\author{J.A. Becerril$^1$\quad K.L. Cortez$^2$\\
	\small{$^1$Instituto Tecnol\'ogico y de Estudios Superiores de Monterrey, Departamento de Ciencias e Ingenier\'ia,}\\ \small{Estado de M\'exico, M\'exico}, jorge.becerril@tec.mx\\
	\small{$^2$Universidad Nacional Aut\'onoma de México, Departamento de Física-Matem\'atica-IIMAS, Ciudad de M\'exico}\\
}

\begin{document}
	\maketitle
	\begin{abstract}
        We establish nondegeneracy and normality conditions for optimal control problems with a single nonregular mixed inequality constraint, where purely finitely additive set functions (charges) appear as multipliers. We first prove for a model problem nonregular at a single instant that degeneration is intrinsic and unavoidable: every admissible process satisfies the necessary conditions degenerately via a pure charge, and normalized multipliers of natural regular approximations admit no weak$^*$ limit in $L^\infty([0,1];\mathbb{R})^*$, yielding only pure charges under every generalized limit, in the sense of Banach limits. Motivated by this analysis, we propose four verifiable conditions that progressively guarantee nondegenerate multipliers, the vanishing of the pure charge, normality, and explicit bounds on charge mass. Furthermore, we show that at a nonregular terminal instant, a balance identity ties the charge mass to the cost multiplier, deciding between complete abnormality and normality. We provide examples confirming the realizability and discussing the limits of each condition.
	\end{abstract}
	
	\medskip
	\noindent\textbf{Keywords:} optimal control; mixed constraints; nonregularity; degeneracy; normality; Pontryagin maximum principle; finitely additive measures; Banach limits.
	
	\medskip
	\noindent\textbf{MSC 2020:} 49K15 (primary); 28A25, 46N10 (secondary).
	
	\section{Introduction}
	
	In optimal control problems with pure state constraints, the phenomenon of degeneration has been extensively studied due to its significant implications for deriving meaningful first-order necessary optimality conditions. Notable contributions in this area include \cite{a6,ferreira,fontes2013,fontes2015,R-V}. Degeneration typically arises when the initial point of the trajectory is restricted to lie on the boundary of the admissible region. In such cases, for any admissible process, it is possible to find nonzero yet degenerate multipliers that satisfy the Pontryagin Maximum Principle (PMP). This renders the necessary optimality conditions uninformative for identifying potential minimizers. To address this issue, numerous results in the literature provide conditions to ensure nondegeneracy, i.e., to guarantee that for a minimizer, it is possible to find nondegenerate multipliers that satisfy the necessary conditions, thereby ensuring their applicability. Other important studies in this context include \cite{lopes2011,R-V2}.
	
	When it comes to problems with mixed state/control constraints, most of the literature has traditionally imposed some kind of regularity conditions to derive necessary optimality conditions; see, for example, \cite{a5, b1, c1, dm2, l1}. Regularity assumptions are helpful in simplifying the analysis, but often at the expense of broad applicability. As explained in \cite{becerriletal2022}, the specific form of regularity depends on the problem context and can be framed as a constraint qualification or as a condition that applies to a given nominal process. For instance, common examples include the Mangasarian--Fromovitz constraint qualification and the regularity condition proposed in \cite{dm2} for cases involving smooth mixed constraints. In settings where non-smooth constraints are present, relevant assumptions are the bounded slope condition and the weak basic constraint qualification as defined in \cite{c1} and \cite{l1}, respectively. In the particular context of a fully convex problem, a Slater-type condition was used in \cite{behe2022} to derive first order optimality conditions, which proved to be less restrictive in some ways; for instance, it allows for costates of bounded variation, in contrast to other constraint qualifications (including the Mangasarian--Fromovitz constraint qualification and the bounded slope condition) that renders it absolutely continuous. 
	
	Recently, there has been a surge in research focusing on nonregular mixed constraints, expanding the applicability of optimality conditions to more general settings. Examples of this trend can be found in \cite{be2021, dmitruk2022, becerriletal2022, dmitruk2023, dmitry2023}. In \cite{be2021}, first-order optimality conditions were established in the form of a weak minimum principle without imposing regularity conditions. Instead, a closedness condition on the image of a functional associated with the equality constraints was sufficient. The term ``weak'' here indicates the absence of the Weierstrass condition (also referred to as the minimality condition) in the principle. Building on this, \cite{becerriletal2022} extended these results by deriving a minimum principle that includes the Weierstrass condition, under the assumptions of affine Lagrangian, dynamics, and constraints on the control variable and a full-rank condition on the equality constraints. Meanwhile, \cite{dmitry2023} addressed the case where a non-regular linear inequality mixed constraint is considered, establishing necessary conditions and proving that the Weierstrass condition holds on specific subsets. In \cite{dmitruk2022} and \cite{dmitruk2023}, a weak minimum principle was also formulated. However, in these works, measures appear as multipliers associated with the mixed constraints, contrasting with \cite{be2021}, where pure charges emerged. A different line, closer to numerical practice, is pursued in \cite{moreira2024}, where necessary conditions of \emph{asymptotic} type are provided in the form of a weak maximum principle which does not presuppose a constraint qualification, together with a method of multipliers realizing them numerically. This approach is in the spirit of the sequential optimality conditions of mathematical programming.
	
	However, despite their generality, these recent approaches are not without limitations. For instance, in \cite[Theorem 4.1]{be2021}, it has been noted that, under certain conditions unrelated to optimality, it is possible to identify degenerate multipliers that satisfy the weak minimum principle for some admissible processes (see Section \ref{sec:when}). This phenomenon, which we refer to as degeneration in the context of mixed constraints, bears strong similarities to the well-known degeneration observed in pure state constraints. In both cases, the existence of degenerate multipliers undermines the utility of the necessary conditions in effectively capturing optimality. For this reason, we adopt the same terminology to emphasize the parallel between these two scenarios.
	
	To the best of our knowledge, the question of how to preclude this phenomenon has not been addressed in the nonregular \emph{mixed-constrained} setting. For pure state constraints the corresponding theory is by now well developed and rests on inward-pointing constraint qualifications \cite{ferreira,fontes2013,fontes2015,nat-colombo-rampazzo,lopes2011,ledzewicz1993}; for mixed constraints, degeneracy and normality have been studied under regularity hypotheses or rank-type constraint qualifications, that is, precisely under the assumptions that \cite{be2021,dmitruk2022,becerriletal2022,dmitruk2023,dmitry2023} dispense with. The closest antecedent we are aware of is \cite{andreani2024}, where nondegenerate necessary optimality conditions are obtained for \emph{discrete-time} problems with mixed constraints under the constant rank of the subspace component constraint qualification. The conditions proposed below appear to be the first to guarantee nondegeneracy, or normality, in the presence of nonregular mixed constraints. Two features distinguish this regime from the state-constrained one. First, the multipliers live in $\Linf^*$, whose closed unit ball is not weak$^*$ \emph{sequentially} compact, so that the passage to the limit in the multipliers is itself delicate and call for either weak$^*$ subnets or Banach limits. Since Banach limits are closely related to charges, we favor them in this paper. Second, and more importantly, the onset of degeneration is governed by an \emph{integrability threshold} on $|\bar g_u|^{-1}$ near the nonregular instant; a mechanism with no counterpart in the pure state-constrained.

	The contributions of this paper are threefold. First, we analyze in depth an example taken from \cite{be2021} which is nonregular at a single instant, and we show that it is completely degenerate: every admissible process satisfies the necessary conditions of \cite[Theorem 4.1]{be2021} with multipliers whose only nonzero component is a pure charge concentrated at the nonregular instant. We further prove that this degeneration is intrinsic: when the problem is approximated by regular problems whose multipliers are unique up to scaling and normal, the corresponding normalized multipliers admit no weak$^*$ limit in $\Linf^*$, and every generalized limit, in the sense of Banach limits, is precisely a degenerate multiplier. This analysis identifies the mechanism behind the degeneration. Second, guided by the previous findings, we propose conditions with various degrees of restrictiveness that guarantee nondegeneracy, normality and the vanishing of the charge for a single mixed constraint. These conditions draw on established results for nondegeneracy in problems with pure state constraints (see \cite{ferreira, ledzewicz1993}), allowing us to extend these insights to the case of mixed constraints. Third, we exhibit an asymmetry between the two endpoints of the interval. At a nonregular \emph{terminal} instant the nonintegrability of $|\bar g_u|^{-1}$ does not produce complete degeneration; instead, a \emph{terminal balance identity} (Proposition \ref{prop:termbalance}) ties the mass of the charge multiplier around $t=1$ to the cost multiplier, and decides between complete abnormality and normality with a charge of prescribed mass. Theorem \ref{t5} connects the two sides; it covers processes that are nonregular at both endpoints simultaneously and secures a normal tuple whose charge is confined to the terminal instant, together with a bound on its mass. We illustrate the results with families of examples that show that every hypothesis is realizable and clearly delimit the reach of each condition.
	
	The paper is structured as follows. Section \ref{prelim} collects notation, the elements of the theory of finitely additive measures used throughout, Banach limits, the Yosida--Hewitt decomposition, the statement of the problem, and the nonregular weak maximum principle of \cite{be2021} on which our analysis rests. Section \ref{sec:when} describes the degeneracy phenomenon: it locates the degenerate tuples (Proposition \ref{prop:trivial}), dissects the case study (Subsection \ref{subsec:example}), and establishes the terminal balance identity that governs abnormality at a nonregular terminal instant. Section \ref{sec:nondeg} presents the nondegeneracy and normality conditions: first at a nonregular initial instant, by approximation with regular truncations; then via a linearized margin condition that dispenses with approximation altogether and applies to every tuple of multipliers; and finally in the mixed regime, where both endpoints are nonregular at once. Section \ref{sec:examples} illustrates each result, Section \ref{sec:proofs} collects the proofs, and Section \ref{sec:conclusion} concludes the paper.
	
	\section{Preliminaries}\label{prelim}
	
	This section gathers the fundamental material on which the rest of the paper rests. Subsection \ref{notation} fixes the notation and isolates the notion of \emph{essential limit} (Definition \ref{def:limess}), the convention under which every pointwise statement about a function in a Lebesgue space is to be read. Subsection \ref{fam} reviews the elements of the theory of charges required by the multipliers of the nonregular maximum principle and the Yosida--Hewitt decomposition \eqref{YH}, together with the Banach limits (Definition \ref{def:banachlimit}) that will supply generalized limits of multipliers where weak$^*$ sequential compactness fails. The last subsection states problem (P), transcribes the necessary conditions of \cite{be2021} in the form used throughout (Theorem \ref{pmp}), recalls the regularity notion of \cite{dm2}, and records a first consequence of the two: the pure charge can only live where regularity fails (Lemma \ref{lem:chargesupport}).
	
	\subsection{Notation}\label{notation}
	
	The Euclidean norm is represented as $|\cdot |$. Define $\R_+=\{x\in\R : x\geq 0\}$, and for a vector $b\in \R^n$, where $b=(b_1, \ldots, b_n)^T$, we write $b\leq 0$ if $b_i\leq 0$ for all $i=1,\ldots, n$. Similarly, for a vector-valued function $b(t)$, the integral $\int b(t) dt$ denotes the vector $\left(\int b_1(t) dt,\ldots,\int b_n(t) dt\right)^T$.
	
	For a real Banach space $X$, its \textit{dual space} (i.e., the set of bounded linear functionals defined on $X$) is denoted by $X^*$.
	
	Consider a function $h:[0,1]\times\R^n\times \R^m \to \R^q$, differentiable with respect to its last two arguments. The derivative of $h$ with respect to $x$ is denoted by $h_x(t,x,u)$, while its derivative with respect to $u$ is written as $h_u(t,x,u)$.
	
	The space of absolutely continuous functions from $[0,1]$ to $\R^n$ is denoted by $\AC{n}$, the space of integrable functions from $[0,1]$ to $\R^n$ is denoted by $\Lone[n]$, the space of essentially bounded functions is $\Linf[n]$, and $\BV{n}$ denotes the space of functions of bounded variation, which are right-continuous on $(0,1)$. Norms of function spaces are written with double bars and a subscript identifying the space: $\|\cdot\|_{L^1}$, $\|\cdot\|_{L^\infty}$, $\|\cdot\|_{C}$ and, for the total variation norm introduced below, $\|\cdot\|_{TV}$; the single bars $|\cdot|$ are reserved for the Euclidean norm on $\R^n$. The subspace $L_+^\infty([0,1]; \R^n)$ contains functions with nonnegative components, and $L_+^\infty([0,1]; \R^n)^*$ refers to the subspace of $\Linf[n]^*$ consisting of positive functionals, i.e., those mapping elements in $L_+^\infty([0,1]; \R^n)$ to nonnegative numbers. This notation applies analogously to other function spaces.
	
	Given a nominal pair $(\bar x, \bar u) \in \AC{n}\times\Linf[m]$ and a function $k$ depending on $(t,x,u)\in [0,1]\times \R^n\times \R^m$, $\bar k(t)$ denotes $k(t,\bar x(t),\bar u(t))$. 
	
	Given a measurable function $k:[0,1]\to\R$ and a constant $c\in\R$, we abbreviate the level and sublevel sets $\{t\in[0,1]:k(t)=c\}$, $\{t\in[0,1]:k(t)\geq c\}$, $\{t\in[0,1]:k(t)\leq c\}$, $\{t\in[0,1]:k(t)\neq c\}$ by $\{k=c\}$, $\{k\geq c\}$, $\{k\leq c\}$, $\{k\neq c\}$, respectively.
	
	The next concept plays a fundamental role in the sections to come:
	
	\begin{definition}\label{def:limess}
		Let $k:[0,1]\to\R$ be a Lebesgue measurable function and let $t\in[0,1]$. We say that $\alpha\in\R$ is the \emph{essential limit} (or \emph{approximate limit}) of $k$ at $t$, written
		\[
		\limess_{s\to t}k(s)=\alpha,
		\]
		if for every $\varepsilon>0$ there exists a neighborhood $V_\varepsilon$ of $t$ such that $|k(s)-\alpha|<\varepsilon$ for a.e. $s\in V_\varepsilon\cap[0,1]$. When it exists, such an $\alpha$ is unique, and we say that $k$ \emph{admits an essential limit at} $t$.
	\end{definition}
	
	In the sequel, any pointwise equality of the form $k(t)=\alpha$ with $k$ belonging to a Lebesgue space, is shorthand for ``the essential limit of $k$ at $t$ exists and equals $\alpha$.'' We use this notation without fear of ambiguity, as the functions under consideration are defined only up to Lebesgue-null sets.
	
	\subsection{Finitely additive measures and Banach limits}\label{fam}
	Let $\mathfrak{L}$ denote the $\sigma$-algebra of Lebesgue measurable subsets of $[0,1]$, and let $\mathfrak{B}$ be the Borel $\sigma$-algebra on $[0,1]$. A finitely additive set function $\zeta$ defined on $\mathfrak{L}$, such that $\zeta(\emptyset)=0$, is called a \textit{finitely additive measure}, or \textit{charge} for short (see \cite{rao1983}). While every measure on $\mathfrak{L}$ is a charge, the converse is not true, as there are charges that are not measures. We say a charge $\zeta$ is \emph{bounded} if $\sup\{|\zeta(E)| : E \in \mathfrak{L}\} < \infty$. The collection of all bounded charges is denoted by $\text{ba}([0,1],\mathfrak{L})$, and the subset consisting of bounded countably additive charges (i.e., bounded measures) is denoted by $\text{ca}([0,1],\mathfrak{L})$.
	
	We will now introduce some additional definitions regarding charges, which will be useful later (for further properties, see \cite{rao1983}, \cite{dun}, or \cite{yosida}).
	
	\begin{definition}\label{charges}
		Let $\zeta$ and $\lambda$ be charges.
		\begin{enumerate}[label=(\roman*)]
			\item The \emph{total variation} of a charge $\zeta$, denoted $|\zeta|$, is defined as
			\[|\zeta|(E) = \sup \sum_{i=1}^n |\zeta(F_i)|,\]
			where the supremum is taken over all finite partitions $\{F_i\}_{i=1}^n \subset \mathfrak{L}$ of $E$.
			\item We write $\lambda \leq \zeta$ if $\lambda(E) \leq \zeta(E)$ for all $E \in \mathfrak{L}$.
			\item A charge $\lambda$ is said to be \emph{weakly absolutely continuous} with respect to $\zeta$, written $\lambda \ll_w \zeta$, if $|\zeta|(E) = 0$ implies $\lambda(E) = 0$ for any $E \in \mathfrak{L}$.
			\item A positive charge $0 \leq \zeta$ is called \emph{pure} if the only measure $\mu$ such that $0 \leq \mu \leq \zeta$ is the trivial measure $\mu = 0$. A general charge $\zeta$ is called \emph{pure} if its total variation $|\zeta|$ is pure.
			\item A charge $\zeta$ is said to be \emph{right-concentrated at $t^*$} if $\zeta\bigl((t^*, t^*+\varepsilon)\bigr)=\zeta([0,1])$ for all $\varepsilon>0$. It is said to be \emph{left-concentrated at $t^*$} if $\zeta\bigl((t^*-\varepsilon, t^*)\bigr)=\zeta([0,1])$ for all $\varepsilon>0$. 
			\item For a Lebesgue measurable function $f : [0,1] \rightarrow \R$, we say $f = 0$ $\zeta$-\emph{a.e.} if, for every $\varepsilon > 0$,
			\begin{align*}
				|\zeta|(\{|f| > \varepsilon\}) = 0.
			\end{align*}
		\end{enumerate}
	\end{definition}
	
	The space $\text{ba}([0,1],\mathfrak{L})$ is equipped with the \emph{total variation norm}, defined as $\|\zeta\|_{TV} = |\zeta|([0,1])$. It is worth noting that part (vi) of Definition \ref{charges} implies that
	\[
	\zeta \in \text{ba}([0,1], \mathfrak{L}) \text{ and } |\zeta|(\{f \neq 0\}) = 0 ~\Longrightarrow~ f = 0~ \zeta\text{-a.e.},
	\]
	but the converse is not generally true, as shown in \cite[Proposition 4.2.7(ii)]{rao1983}. However, recall that if
	\[
	\mu \in \text{ca}([0,1], \mathfrak{L}) \text{ and } |\mu|(\{f \neq 0\}) = 0, ~\text{then}~ f = 0~ \mu\text{-a.e.}
	\]
	holds in both directions. 
	
	Throughout this paper, unless explicitly stated otherwise, when we write a.e. we refer to the Lebesgue measure, denoted by $\leb$. The space $\text{ba}([0,1],\mathfrak{L},\leb)$ denotes the set of all bounded \emph{weakly absolutely continuous charges} (see Definition \ref{charges}(iii)) with respect to $\leb$, while $\text{ca}([0,1],\mathfrak{L},\leb)$ is the subset of countably additive charges within $\text{ba}([0,1],\mathfrak{L},\leb)$. Since we always consider in this paper absolute continuity in the weak sense of \ref{charges}(iii), we drop the adjective \emph{weak} from now on.
	
	We shall use repeatedly the following two facts. First, the dual space $\Linf^*$ is isometrically isomorphic to $\text{ba}([0,1],\mathfrak{L},\leb)$ (see \cite[Theorem IV.8.16]{dun}). Second, by the Yosida--Hewitt decomposition \cite{yosida}, every $\omega\in\text{ba}([0,1],\mathfrak{L},\leb)$ admits a unique decomposition
	\begin{equation}\label{YH}
		d\omega = z\,d\leb + d\zeta,\qquad z\in \Lone,\qquad \zeta\ \text{a pure charge in } \text{ba}([0,1],\mathfrak{L},\leb),
	\end{equation}
	with $z\geq 0$ a.e. and $\zeta\geq 0$ whenever $\omega\geq 0$ (cf. also \cite[Chapter 10]{rao1983}).
	
	The integration of \emph{bounded measurable functions} with respect to \emph{weakly absolutely continuous charges} with respect to the Lebesgue measure is defined as follows: for a charge $\zeta$, the integral of simple functions is defined in the usual way, and for an arbitrary bounded measurable function $f$, one takes a sequence of simple functions $\psi_n$ converging uniformly a.e. to $f$ and sets
	\[
	\int_0^1 f(t) d\zeta \coloneqq  \lim_{n\rightarrow\infty} \int_0^1 \psi_n(t) d\zeta.
	\]
	For more details on this integral, see \cite[Section 19.3]{roy}, and for more general theories of integration involving charges, see \cite{rao1983} or \cite{dun}.
	
	Finally, regarding notation for integration with respect to a charge: for a function $f \in \Linf{}$ and a charge $\zeta \in \text{ba}([0,1],\mathfrak{L},\leb)$, the expression $d\omega = f d\zeta$ indicates that $\omega \in \text{ba}([0,1],\mathfrak{L},\leb)$ is the charge defined by
	\[
	\omega(E) = \int_E f(t) d\zeta \quad \text{for all} \ E \in \mathfrak{L}.
	\]
	
	Limits of sequences of charges will play a central role in this paper. Since $\Linf$ is not separable, the closed unit ball of its dual, although weak$^*$ compact by the Banach--Alaoglu theorem, is \emph{not} weak$^*$ sequentially compact: bounded sequences of charges need not contain weak$^*$ convergent subsequences (a concrete instance is exhibited in Proposition \ref{prop:limits} below). Compactness must therefore be invoked along subnets or, alternatively, generalized limits may be assigned by means of Banach limits.
	
	\begin{definition}\label{def:banachlimit}
		A \emph{Banach limit} is a linear functional $\blim:\ell^\infty(\mathbb R)\to\R$ satisfying, for every bounded sequence $(x_i)$:
		\begin{enumerate}[label=(\roman*)]
			\item (Positivity) $\blim x_i\geq 0$ whenever $x_i\geq 0$ for all $i$;
			\item (Normality) $\blim (1,1,\ldots)=1$;
			\item (Right-shift invariance) $\blim x_i=\blim y_i$ where $y_i=x_{i+1}$ for all $i$.
		\end{enumerate}
		Banach limits exist as a consequence of the Hahn--Banach Theorem; see, e.g., \cite[pp. 39--41]{rao1983} or \cite[pp. 84--86]{conway1990}. From the previous axioms, it is possible to prove that Banach limits satisfy the following properties:
		\begin{enumerate}[label=(\roman*), resume]
			\item $\liminf{x_i}\le\blim x_i\le\limsup{x_i}$;
			\item $\|\blim\|\coloneqq \sup\left\{ |\blim x_i| : \|(x_i)\|_{\ell^{\infty}}=1 \right\}=1$
		\end{enumerate}
		In particular, if $(x_i)$ is a convergent sequence of real numbers, then any Banach limit is consistent with the usual limit: $\blim x_i=\lim x_i$. On the other hand, Banach limits are not unique and attach a real-value even to nonconvergent sequences in $\ell^\infty(\R)$.
	\end{definition}
	
	\begin{remark}\label{rem:concentrated}
		The existence of concentrated charges can be proved using Banach limits; e.g., a
		concrete realization of a right-concentrated charge at $t^* < 1$ that is absolutely
		continuous with respect to $\leb$ is
		\begin{equation}\label{eq:ccharge}
			\zeta(E) \coloneqq  \blim_i\, i\,\leb\bigl(E \cap (t^*,\, t^* + \tfrac{1}{i})\bigr),
			\qquad E \in \mathfrak{L},
		\end{equation}
		for any Banach limit $\blim$. Finite additivity and positivity follow from the
		linearity and positivity of $\blim$, whereas weak absolute continuity is immediate.
		The fact that $\zeta\bigl((t^*, t^*+\varepsilon)\bigr) = \zeta([0,1]) = 1$ for every $\varepsilon > 0$
		is a consequence of the normality and right-shift invariance axioms. Concentrated charges are necessarily pure; this fact is verified as in the proof of Proposition~\ref{prop:Emult} below.
	\end{remark}
	
	\begin{remark}
		It is important not to confuse the Dirac measure concentrated at a point, say at $t^* = 0$, with a charge $\zeta$ right-concentrated at the same point.
		For the Dirac measure, the singleton $\{0\}$ carries the entire mass; by contrast,
		$\zeta$ satisfies $\zeta(\{0\}) = 0$, and the full charge $\zeta([0,1])$ is instead
		carried by every interval of the form $(0, \varepsilon)$.
	\end{remark}
	
	\subsection{Problem Statement and First Order Optimality Conditions}
	The problem of interest is as follows:
	\begin{equation*}
		\text{(P)}~\left\{	\begin{array}{lll}
			&\text{min}&J(x,u)=\displaystyle\int_0^1L(t,x(t),u(t))dt  \\
			& s.t.&(x,u)\in \AC{n}\times \Linf[m],\\
			&&\dot x(t)=f(t,x(t),u(t))\quad \text{a.e.},\\
			&&g(t,x(t),u(t))\leq 0\ \quad \text{a.e.},\\
			
			&&x(0)=x_0,
		\end{array}\right.
	\end{equation*}
	\noindent
	where $x_0\in\R^n$ is fixed and $(L,f,g):[0,1]\times\R^n\times\R^m\to\R\times\R^n\times\R$ are continuous with respect to $(t,x,u)$ and continuously differentiable with respect to $(x,u)$.
	
	We call any pair $(x,u)\in \AC{n}\times \Linf[m]$ satisfying the constraints of (P) an \emph{admissible process}. Throughout this paper we assume the existence of at least one \emph{weak local minimizer} of (P); i.e., an admissible process $(\bar x,\bar u)$ for which there exists a constant $\varepsilon>0$ such that the inequality $J(\bar x,\bar u)\leq J(x,u)$ holds for all admissible processes $(x,u)$ satisfying
	\[\|x-\bar x\|_C<\varepsilon\quad\text{and}\quad\|u-\bar u\|_{L^\infty}<\varepsilon.\]
	
	Due to the smoothness assumption, we can apply \cite[Theorem 4.1]{be2021} which establishes the following necessary conditions for optimality without assuming regularity:
	
	\begin{theorem}\label{pmp}
		Suppose that $(\bar x,\bar u)$ is a weak local minimizer for (P). Then, there exist a scalar $\lambda_0\geq 0$, a function $z\in \Lone$ and a pure charge $\zeta\in$ ba$([0,1],\mathfrak{L},\leb)$ with $z,\, \zeta\geq 0$, and functions $\lambda,\beta \in \BV{n}$ satisfying the following conditions:
		\begin{enumerate}[label=(\alph*)]
			\item the nontriviality condition
			\[\lambda_0+\|z\|_{L^1}+\zeta([0,1])>0;\]
			\item the complementary slackness conditions
			\[z(t)\bar g(t)=0\ \ \text{a.e.}\ \ \text{and}\ \ \bar g(t)=0\ \ \zeta\text{-a.e.;}\]
			\item the transversality conditions $\lambda(1)=\beta(1)=0$ and the costate equation
			\[-d\lambda=\left[\bar f_x(t)^T\lambda(t)+\lambda_0\bar L_x(t)^T+z(t)\bar g_{x}(t)^T\right]dt+d\beta\]
			where $\beta$ is given by
			\[\beta(0)=-\int_0^1\bar g_x(t)^T\,d\zeta\quad\text{and}\quad\beta(t)=-\lim_{k\to\infty}\int_{t+\frac{1}{k}}^1 \bar g_x(s)^T\, d\zeta \ \text{for}\ t\in (0,1);\]
			\item the stationarity conditions
			\begin{align*}
				0&=\bar f_{u}^T(t)\lambda(t)+\lambda_0\bar L_u^T(t)+\bar g_u^T(t)z(t)\quad\text{a.e.}.\\
				0&=\bar g_{u}^T\,d\zeta.
			\end{align*}
			
		\end{enumerate}
	\end{theorem}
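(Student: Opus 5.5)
The statement is the nonregular weak maximum principle of \cite[Theorem 4.1]{be2021}, transcribed for (P). My plan is therefore to specialize that theorem to the present smooth setting rather than rebuild it. For completeness, I would recall the structure of its proof, which is a Lagrange multiplier rule in Banach spaces.

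The first step is the abstract reformulation. I would set $X=\AC{n}\times\Linf[m]$ and define the equality operator
\[
F(x,u)=\Bigl(x(\cdot)-x_0-\int_0^{\cdot}f(s,x,u)\,ds\Bigr)\in \AC{n}.
\]
The inequality operator is $G(x,u)=g(\cdot,x,u)\in\Linf$, with cone $-L^\infty_+$, and the cost is $J$. By the standing $C^1$ hypotheses in $(x,u)$ and the boundedness of $\bar u$, the maps $F$, $G$ and $J$ are Fréchet differentiable at $(\bar x,\bar u)$.

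The second step is to justify a generalized Lagrange multiplier rule, of Dubovitskii--Milyutin or Fritz John type, without any Robinson-type regularity. The key point is that $F'(\bar x,\bar u)$ is onto $\AC{n}$, because the linearized ODE is uniquely solvable for any control perturbation. Hence the only possible failure of regularity comes from the mixed inequality, which is what \cite{be2021} handles through a closed-image argument. The rule yields $\lambda_0\ge0$, $p\in\AC{n}^*$ and $\omega\in L^\infty_+([0,1];\R)^*$, not all zero, such that
\[
\lambda_0J'+p\circ F'+\omega\circ G'=0, \qquad \langle\omega,\bar g\rangle=0 .
\]
Surjectivity of $F'$ shows that $p=0$ forces $\lambda_0=\omega=0$. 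Consequently nontriviality can be stated in terms of $(\lambda_0,\omega)$ alone.

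The third step is decoding the multiplier. By \cite[IV.8.16]{dun}, $\omega$ is identified with a positive element of $\ba$. The Yosida--Hewitt decomposition \eqref{YH} then gives $d\omega=z\,dm+d\zeta$ with $z\ge0$ and $\zeta$ a nonnegative pure charge. Condition (a) follows from $\|\omega\|=\|z\|_{L^1}+\zeta([0,1])$. For (b), the relation $\langle\omega,\bar g\rangle=0$ together with $\bar g\le0$ and positivity splits into $\int z\bar g=0$ and $\int\bar g\,d\zeta=0$. These give $z\bar g=0$ a.e. and $\zeta(\{\bar g<-\varepsilon\})=0$ for every $\varepsilon>0$, which is exactly $\bar g=0$ $\zeta$-a.e.

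Next I would test the stationarity identity against variations. Taking variations $(0,v)$ with $v$ supported where the costate representation is valid, and testing with $x$-variations, separates the identity into an $L^1$ part and a pure-charge part. Defining $\lambda$ as the solution of the adjoint equation driven by $\lambda_0\bar L_x+z\bar g_x$ plus the BV function $\beta$ obtained by integrating $\bar g_x$ against $\zeta$ over $[t,1]$ gives (c). The left limits are taken through $\int_{t+1/k}^1$, since $\zeta$ has no atoms and $\beta$ must be right-continuous. Pairing with $v$ then gives (d). The $L^1$ component yields the pointwise equation, and the pure component yields $\bar g_u\,d\zeta=0$, because a pure charge cannot be absorbed into an $L^1$ density.

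The main obstacle is this last separation. The $x$-equation involves $\bar x$ perturbations that are continuous, so $\zeta$ acts on $\bar g_x\cdot\delta x$ only through its ``limits from the right'' structure. Rigorously producing $\beta\in\BV{n}$, including the formula at $t=0$, requires the integration-by-parts argument of \cite{be2021}. I would simply cite it there.
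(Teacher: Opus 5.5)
Your approach is the paper's: the paper gives no independent proof of this statement and imports it from \cite[Theorem 4.1]{be2021}, citing the smoothness of $(L,f,g)$, exactly as you propose. One slip in your recalled sketch: surjectivity of $F'(\bar x,\bar u)$ shows that $\lambda_0=0$ and $\omega=0$ force $p=0$, not that $p=0$ forces $\lambda_0=\omega=0$ (which is false in general), and it is this direction that lets nontriviality be stated in terms of $(\lambda_0,\omega)$ alone.
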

	
	\begin{remark}\label{rem:interval}
		Theorem \ref{pmp} holds, mutatis mutandis, for problems posed on a subinterval $[a,1]\subset[0,1]$ with fixed initial condition $x(a)=x_a$; the multipliers then satisfy $z\in L^1([a,1];\R)$, $\zeta\in\mbox{ba}([a,1],\mathfrak{L},\leb)$, and $\lambda,\beta\in \mathrm{BV}([a,1];\R^n)$. We shall use this version repeatedly when dealing with the auxiliary problems of Sections \ref{sec:when} and \ref{sec:proofs}.
	\end{remark}
	
	Regularity plays a crucial role in optimal control theory, as it simplifies the analysis and strengthens results. As mentioned in the introduction, regularity conditions vary in form, often framed as constraint qualifications or as specific requirements on a nominal process. In this paper, we work with the regularity definition from \cite{dm2} which is as follows:
	
	\begin{definition}\label{reg}
		We say that an admissible process $(\bar x,\bar u)$ is called \emph{regular} if there exist $\varepsilon> 0$ and $v_0\in L^\infty([0, 1]; \R^m)$ such that 
		\[
		\bar g_{u}(t)v_0(t)\geq 1 \quad \text{a.e. in } A_{\varepsilon}\coloneqq \{\bar g\geq-\varepsilon\}.
		\]
	\end{definition}
	
	Under this condition, stronger results can be achieved (see, e.g., \cite{a5,b1,c1,dm2}). Perhaps the most notable advantage is that regularity guarantees the vanishing of the pure charge in Theorem \ref{pmp} and, consequently, the absolute continuity of the costate. In contrast, nonregularity may complicate the analysis due to the intricate structure of the charge space. The following lemma localizes this observation: whatever its global structure, the pure charge can only live where regularity fails. It will be used repeatedly in the sequel.
	
	\begin{lemma}\label{lem:chargesupport}
		Let $(\bar x,\bar u)$ be an admissible process and let $(\lambda,\beta,\lambda_0,z,\zeta)$ satisfy the conditions of Theorem \ref{pmp}. Suppose that $(\bar x,\bar u)$ is regular on a subinterval $I\subseteq[0,1]$; that is, there exist $\varepsilon>0$ and $v_0\in L^\infty(I;\R^m)$ such that $\bar g_u(t)v_0(t)\geq 1$ a.e. on $A_\varepsilon\cap I$. Then $\zeta(E)=0$ for every measurable $E\subseteq I$. In particular, the pure charge is supported on the set where $(\bar x,\bar u)$ fails to be regular.
	\end{lemma}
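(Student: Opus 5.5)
The argument will use only two conditions from Theorem \ref{pmp}: complementary slackness for $\zeta$, namely $\bar g=0$ $\zeta$-a.e., and the second stationarity condition $\bar g_u^T\,d\zeta=0$. Because $\zeta\geq 0$, it is enough to show $\zeta(I)=0$; the general case $E\subseteq I$ then follows from monotonicity, $0\le\zeta(E)\le\zeta(I)$. We split $I$ into two pieces, $I\cap A_\varepsilon$ and $I\setminus A_\varepsilon=I\cap\{\bar g<-\varepsilon\}$, and treat each separately.

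On the piece $I\setminus A_\varepsilon$ we have $|\bar g|>\varepsilon$. Complementary slackness in the sense of Definition \ref{charges}(vi) says $\zeta(\{|\bar g|>\varepsilon\})=0$ for every $\varepsilon>0$. Hence $\zeta(I\setminus A_\varepsilon)=0$.

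On the piece $I\cap A_\varepsilon$, define $v(t)=v_0(t)$ for $t\in I\cap A_\varepsilon$ and $v(t)=0$ elsewhere, extending $v_0$ by zero outside $I$. Then $v\in\Linf[m]$. Since $\bar g_u$ is bounded, each component of $\bar g_u v$ is a bounded measurable function, so it can be integrated against $\zeta$. The stationarity condition $\bar g_u^T d\zeta=0$ means the vector charge $E\mapsto\int_E \bar g_u^T d\zeta$ vanishes. Integrating it against $v$ gives $\int_0^1 \bar g_u(t)v(t)\,d\zeta=0$. To justify this step, approximate $v$ uniformly by simple functions: for a simple function $\sum_j c_j\chi_{E_j}$ the integral equals $\sum_j c_j^T\int_{E_j}\bar g_u^Td\zeta=0$, and the identity passes to the uniform limit because $\bar g_u$ is bounded.

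Finally, $\bar g_u v\geq \chi_{I\cap A_\varepsilon}$ holds $\leb$-a.e. Since $\zeta$ is weakly absolutely continuous with respect to $\leb$, $\leb$-null sets are $\zeta$-null, so by positivity of the integral against $\zeta\ge0$,
\[
0=\int_0^1 \bar g_u v\,d\zeta\;\geq\;\zeta(I\cap A_\varepsilon)\;\geq 0 .
\]
Adding the two pieces gives $\zeta(I)=0$. For the final claim, on the set where the process is regular (any subinterval of this kind) the charge therefore has no mass.

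The only delicate point is the stationarity step: reading $\bar g_u^T d\zeta=0$ as an identity of vector charges and pairing it with an $L^\infty$ function. This requires linearity and uniform continuity of the charge integral, together with the fact that inequalities holding $\leb$-a.e. also hold for integrals against $\zeta\ll_w\leb$. Everything else is bookkeeping.
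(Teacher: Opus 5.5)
Your proof is correct and follows essentially the same route as the paper. You extend $v_0$ by zero, pass from the charge-form stationarity $\int_E\bar g_u\,d\zeta=0$ to $\int\bar g_u v\,d\zeta=0$ by uniform approximation with simple functions, use complementary slackness to make $I\setminus A_\varepsilon$ $\zeta$-null, and conclude by positivity. The differences are only cosmetic: you truncate $v_0$ to $I\cap A_\varepsilon$, which makes the integrand nonnegative everywhere, and you prove $\zeta(I)=0$ before invoking monotonicity, whereas the paper fixes $E\subseteq I$ and discards the integral over $E\setminus A_\varepsilon$ directly.
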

	
	\begin{proof}
		Extend $v_0$ by zero to $[0,1]$, so that $v_0\in\Linf[m]$ and $\bar g_uv_0\in\Linf$. The second stationarity condition of Theorem \ref{pmp}(d) holds in the charge form established in \cite{be2021}, namely $\int_E\bar g_u\,d\zeta=0$ for every $E\in\mathfrak{L}$; approximating $v_0$ uniformly by simple functions then yields
		\[\int_E\bar g_u(t)v_0(t)\,d\zeta=0\qquad\text{for every }E\in\mathfrak{L}.\]
		Fix a measurable $E\subseteq I$. Since $\bar g\leq 0$ a.e., the set $E\setminus A_\varepsilon=E\cap\{\bar g<-\varepsilon\}$ is contained, up to a Lebesgue null set, in $\{|\bar g|>\varepsilon/2\}$, which is $\zeta$-null by the second complementary slackness condition of Theorem \ref{pmp}(b); as the positive charge $\zeta$ vanishes on Lebesgue null sets, $\zeta(E\setminus A_\varepsilon)=0$, and the integral over $E\setminus A_\varepsilon$ of any function in $\Linf$ vanishes. On $A_\varepsilon\cap I$, in turn, $\bar g_uv_0\geq 1$ holds off a Lebesgue null set, so the positivity and absolute continuity of $\zeta$, together with the monotonicity of the charge integral give
		\[0=\int_E\bar g_u(t)v_0(t)\,d\zeta=\int_{E\cap A_\varepsilon}\bar g_u(t)v_0(t)\,d\zeta\ \geq\ \zeta(E\cap A_\varepsilon)=\zeta(E)\ \geq\ 0,\]
		hence $\zeta(E)=0$.
	\end{proof}
	
	The next section examines specific cases where the lack of regularity results in admissible processes that satisfy the necessary conditions in a trivial manner or result in abnormal multipliers, the latter meaning that $\lambda_{0}=0$.

	\section{When may degeneracy and abnormality phenomena arise?}
	\label{sec:when}
	
	Necessary conditions are useful only insofar as they discriminate among admissible processes, and in the absence of regularity they may fail to do so in two distinct ways. This section identifies the mechanism behind each. We first make precise what a degenerate tuple of multipliers is (Definition \ref{def:trivial}) and exhibit two configurations of the nominal process in which such a tuple is always available (Proposition \ref{prop:trivial}). Subsection \ref{subsec:example} then dissects a model problem, nonregular at the single instant $t=0$, on which Theorem \ref{pmp} degenerates completely, and shows that the degeneration survives every limiting procedure applied to its regular approximations. The final subsection turns to the terminal instant, where nonregularity manifests itself differently: instead of degeneracy, a terminal balance identity (Proposition \ref{prop:termbalance}) tells us whether the problem is completely abnormal or ties the value of the cost multiplier with the value of the charge around the endpoint.
	
	\subsection{Degenerate multipliers}
	
	A key limitation of the PMP is the emergence of degeneracy and abnormality phenomena, which restricts its practical applicability. Degenerate multipliers fail to provide sufficient information to distinguish between admissible processes, while abnormal multipliers fail to relate the cost with the restrictions of the problem. 
	
	In the context of state constraints, a degenerate tuple of multipliers appears when every multiplier vanishes except for the measure associated with the state constraint, which reduces to a Dirac measure concentrated at a single point (typically the initial time $t=0$). The analogue for the general state-control constraints applicable to Theorem \ref{pmp} is formulated as follows:
	
	\begin{definition}\label{def:trivial}
		We say that $(\lambda, \beta, \lambda_0, z, \zeta)$ 
		is a tuple of \emph{degenerate multipliers} if 
		$\lambda_0=0$, $z\equiv 0$, and there exists 
		$t^*\in[0,1]$ such that
		\begin{equation}\label{trivial}
			\lambda(t)=-\beta(t)=\begin{cases}
				\alpha & \mbox{if } t<t^*\\ 0 & \mbox{if } t\ge t^* \end{cases}
		\end{equation}
		where $\zeta$ is a pure charge, absolutely 
		continuous with respect to the Lebesgue measure, 
		and concentrated at $t=t^*$ either from the 
		left or from the right. When $t^*=0$, the values of $\lambda$ and $\beta$ at the single instant $t=0$ are understood to be those prescribed by the formula of Theorem \ref{pmp}(c), namely $\lambda(0)=-\beta(0)=\int_0^1\bar g_x^T(t)\,d\zeta$, which need not vanish.
	\end{definition}
	
	We now address the following question: under what conditions can we find a tuple of degenerate multipliers $(\lambda, \beta, \lambda_0, z, \zeta)$ that satisfies all the conditions of Theorem \ref{pmp}? Proposition \ref{prop:trivial} below identifies two problematic cases when such a behavior occurs.
	
	Recall that from this point forward, whenever an equality of the form $h(t)=\alpha$ is stated for a measurable function $h$, it is understood that $\alpha$ is the essential limit of $h$ at $t$, in the sense of Definition \ref{def:limess}.
	
	The next result pinpoints two scenarios where degeneracy always occurs. We focus on these cases in the forthcoming sections.
	
	\begin{proposition}\label{prop:trivial}
		Suppose that for some $t^*\in [0,1]$, we have $\bar{g}(t^*)=0$ and $\bar{g}_u(t^*)=0$.
		Then, in each of the following cases, there exists a tuple of degenerate multipliers
		satisfying all conditions of Theorem~\ref{pmp}:
		\begin{enumerate}[label=(\roman*)]
			\item $t^*=0$;
			\item $\bar{g}_x(t^*)=0$.
		\end{enumerate}
	\end{proposition}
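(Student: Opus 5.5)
We construct the degenerate tuple explicitly and check each condition of Theorem~\ref{pmp} directly. Set $\lambda_0=0$ and $z\equiv 0$, and let $\zeta$ be a positive pure charge, absolutely continuous with respect to $\leb$, with total mass $1$, concentrated at $t^*$. In case (i) take the right-concentrated charge of Remark~\ref{rem:concentrated} with $t^*=0$. In case (ii), if $t^*<1$ take it right-concentrated at $t^*$, and if $t^*=1$ take the analogous left-concentrated charge
\[\zeta(E)=\blim_i\, i\,\leb\bigl(E\cap(1-\tfrac1i,1)\bigr).\]
Nontriviality (a) is then immediate, since $\zeta([0,1])=1$.

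The key tool is the following lemma. If $h\in\Linf$ has essential limit $\alpha$ at $t^*$ (in the sense of Definition~\ref{def:limess}) and $\zeta$ is concentrated at $t^*$ (one-sidedly) with mass $1$, then $\int_E h\,d\zeta=\alpha\,\zeta(E)$ for every $E\in\mathfrak{L}$. To prove it, fix $\varepsilon>0$ and choose $\delta$ with $|h-\alpha|<\varepsilon$ a.e. on the one-sided interval $V_\delta$. Since $\zeta(V_\delta)=\zeta([0,1])$ and $\zeta\ge0$, the complement of $V_\delta$ has $\zeta$-measure zero. Hence
\[\Bigl|\int_E h\,d\zeta-\alpha\zeta(E)\Bigr|\le\int_{E\cap V_\delta}|h-\alpha|\,d\zeta\le\varepsilon,\]
using monotonicity of the charge integral and the fact that $\zeta$ vanishes on $\leb$-null sets.

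Applying the lemma to $\bar g$ and $\bar g_u$ (whose essential limits at $t^*$ are $0$) gives the conditions involving $\zeta$:
\begin{itemize}[label={}]
\item Complementary slackness (b): $\{|\bar g|>\varepsilon\}$ misses $V_\delta$ up to a null set, so it is $\zeta$-null.
\item Stationarity (d): $\int_E\bar g_u\,d\zeta=0$ for every $E$. The first stationarity equation, $0=\bar f_u^T\lambda$ a.e., needs $\lambda=0$ a.e., which holds in both cases, as we now check.
\end{itemize}

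Next compute $\beta$. By the lemma applied to $\bar g_x$, with limit $\alpha_x:=\bar g_x(t^*)$, we have $\beta(0)=-\alpha_x$. For $t\in(0,1)$, $\beta(t)=-\lim_k\int_{t+1/k}^1\bar g_x^T\,d\zeta$, and this splits by position relative to $t^*$:
\begin{itemize}[label={}]
\item If $t\ge t^*$ (right-concentrated case), the intervals $(t+1/k,1]$ miss a right neighborhood of $t^*$, so the integrals are $0$.
\item If $t<t^*$, then $(t+1/k,1]$ eventually contains a neighborhood side of $t^*$ and the integral equals $\alpha_x^T$.
\end{itemize}
So $\beta=-\alpha_x\mathbf 1_{[0,t^*)}$, with $\beta(0)=-\alpha_x$. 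For the left-concentrated case at $t^*=1$, $\beta(t)=-\alpha_x$ for all $t<1$ and $\beta(1)=0$.

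Now set $\lambda=-\beta$. The costate equation reduces to $-d\lambda=\bar f_x^T\lambda\,dt+d\beta$, i.e. $\bar f_x^T\lambda=0$ as a measure.
\begin{itemize}[label={}]
\item Case (i), $t^*=0$: $\lambda$ vanishes on $(0,1]$, so $\lambda\,dt=0$ regardless of $\alpha_x$. The jump at $0$ is allowed by the convention of Definition~\ref{def:trivial}.
\item Case (ii): $\alpha_x=0$, so $\lambda\equiv\beta\equiv0$, and every equation holds trivially, for any $t^*$.
\end{itemize}
The transversality conditions $\lambda(1)=\beta(1)=0$ hold in all cases, and the tuple has the form \eqref{trivial}.

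The main obstacle I expect is the bookkeeping in the formula for $\beta(t)$ at the jump point and in the left-concentrated endpoint case. This includes making the essential-limit lemma rigorous for the charge integral of an $L^\infty$ function rather than a simple function; I would handle that by uniform approximation by simple functions, as in the proof of Lemma~\ref{lem:chargesupport}.
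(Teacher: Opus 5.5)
Your proof is correct and is essentially the paper's. You use the same degenerate tuple ($\lambda_0=0$, $z\equiv0$, a unit charge right-concentrated at $t^*$, or left-concentrated when $t^*=1$), evaluate $\beta$ from Theorem~\ref{pmp}(c) in the same way, and set $\lambda=-\beta$. Two presentational differences: you posit $\lambda=-\beta$ and verify it directly rather than deriving it by splitting the costate equation into absolutely continuous and singular parts, and you state explicitly the concentration lemma that the paper uses tacitly.

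One small correction. In case (i) nothing guarantees that $\bar g_x$ has an essential limit at $0$, so you cannot write $\beta(0)=-\bar g_x(0)$ via your lemma. Leave $\beta(0)=-\int_0^1\bar g_x^T\,d\zeta$ unevaluated, as the paper does. This is harmless, because your case (i) argument never uses that value.
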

	
	\begin{proof}
		Suppose first $t^*<1$, let $\zeta$ be a right-concentrated unit charge at $t^*$, and set $\lambda_0=0$ and
		$z\equiv 0$. By Theorem~\ref{pmp}(c), the multiplier $\beta$ must then be given by
		\[
		\beta(t) = \begin{cases}
			-\displaystyle\int_{t^*}^1\bar{g}_{x}^T(s)\,d\zeta & \text{if } t < t^*, \\[6pt]
			0 & \text{if } t \ge t^*,\ t\neq 0,
		\end{cases}
		\]
		together with $\beta(0)=-\int_0^1\bar g_x(s)^T\,d\zeta$ in the case $t^*=0$ (for $t^*>0$ the first branch already covers $t=0$).
		In particular, $\beta$ is a step function, so $d\beta$ is a purely atomic measure
		and hence singular with respect to Lebesgue measure.
		Decomposing the costate equation into its absolutely continuous and
		singular parts yields
		\[
		d\lambda^s = -d\beta, \qquad
		-\dot{\lambda}^a = \bar{f}_x(t)^T\lambda^a(t) \quad \text{a.e.}
		\]
		The transversality condition gives $\lambda(1) = \beta(1) = 0$.
		Since $\lambda^a$ satisfies a homogeneous linear ODE with terminal condition
		$\lambda^a(1) = 0$, uniqueness implies $\lambda^a \equiv 0$.
		Combined with $d\lambda^s = -d\beta$ and $\lambda^s(1) = 0$, this determines
		$\lambda$ uniquely as $\lambda = -\beta$.
		
		At this stage, all conditions of Theorem~\ref{pmp} are satisfied except possibly
		the stationarity condition, which reduces to
		\begin{equation}\label{statlambda}
			\bar{f}_u(t)^T\lambda(t) = 0 \quad \text{a.e. on } [0,1].
		\end{equation}
		It therefore suffices to verify that $\lambda = 0$ a.e. in each case, for then
		\eqref{statlambda} holds trivially.
		In case~(i), since $t^* = 0$, the first branch of the formula is vacuous and
		$\beta$ vanishes on $(0,1]$; the value $\beta(0)=-\int_0^1\bar g_x(s)^T\,d\zeta$
		need not vanish, but is carried by the single instant $t=0$: the atoms of
		$d\lambda$ and $d\beta$ there cancel, by $d\lambda^s=-d\beta$, and
		$\lambda=-\beta$ vanishes a.e., so \eqref{statlambda} holds.
		In case~(ii), since $\zeta$ is a unit charge concentrated at $t^*$ and
		$\bar{g}_x(t^*) = 0$, we have
		$\int_{t^*}^1\bar{g}_x(s)^T\,d\zeta = \bar{g}_x^T(t^*) = 0$,
		so $\beta \equiv 0$ and $\lambda\equiv 0$.
		
		It remains to treat $t^*=1$, which can occur only in case~(ii). A right-concentrated charge at $t^*=1$ does not exist; let instead $\zeta$ be a left-concentrated unit charge at $t^*=1$ (Definition \ref{charges}(v)), as furnished by the mirror image of \eqref{eq:ccharge}, and keep $\lambda_0=0$, $z\equiv 0$. For $t\in(0,1)$ the intervals $(t+\tfrac1k,1]$ eventually contain a left neighborhood of $1$, so the formula of Theorem \ref{pmp}$(c)$ gives $\beta(t)=-\lim_k\int_{t+1/k}^1\bar g_x^T\,d\zeta=-\bar g_x^T(t^*)=0$, and likewise $\beta(0)=-\int_0^1\bar g_x^T\,d\zeta=-\bar g_x^T(t^*)=0$; together with the transversality condition $\beta(1)=0$ this yields $\beta\equiv 0$. The costate argument above then gives $\lambda\equiv 0$, so \eqref{statlambda} holds, while the slackness condition $\bar g=0$ $\zeta$-a.e. and the charge stationarity $\bar g_u\,d\zeta=0$ follow, exactly as in the right-concentrated case, from the essential vanishing of $\bar g$ and $\bar g_u$ at $t^*$ and the concentration of $\zeta$.
		In all cases, the resulting tuple is degenerate in the sense of
		Definition \ref{def:trivial} and satisfies all conditions of Theorem~\ref{pmp}.
	\end{proof}
	
	Proposition \ref{prop:trivial} locates degenerate tuples at a nonregular initial instant, or at a nonregular instant where $\bar g_x$ vanishes. Outside these two cases the picture changes qualitatively. The following example, which we transcribe from \cite[Example 4.3]{be2021} for later use, exhibits a process that is nonregular only at the \emph{terminal} instant $t=1$, with $\bar g_x(1)\neq 0$: there, no choice of multipliers can dispose of the pure charge.
	
	\begin{example}\label{ex:be43}
		Consider the problem
		\begin{equation*}
			\text{(T)}~\left\{	\begin{array}{lll}
				&\text{min}&J(x,u)=\displaystyle\int_0^1 \bigl(u(t)-1\bigr)\,dt,\\
				& s.t.&(x,u)\in \AC{}\times \Linf,\\
				&&\dot x(t)=1-u(t) \quad \text{a.e.},\\
				&&x(t)+u(t)^2-1\leq 0 \quad \text{a.e.},\\
				&&x(0)=0.
			\end{array}\right.
		\end{equation*}
		The process $(\bar x(t),\bar u(t))=(t,0)$ is admissible and is a global minimizer: for any admissible pair $(x,u)$,
		\[\int_0^1\bigl(u(t)-1\bigr)\,dt=-\int_0^1\dot x(t)\,dt=-x(1)\geq -1,\]
		since the mixed constraint gives $x(t)\leq 1-u^2(t)\leq 1$ a.e.\ and hence, by continuity, $x(1)\leq 1$; and $(\bar x,\bar u)$ attains the minimum value $-\bar x(1)=-1$. Here $g(t,x,u)=x+u^2-1$, so that along $(\bar x,\bar u)$
		\[\bar g(t)=t-1,\qquad \bar g_x(t)\equiv 1,\qquad \bar g_u(t)=2\bar u(t)\equiv 0.\]
		The gradient $\bar g_u$ vanishes identically, so Definition \ref{reg} fails; the constraint is active only at the terminal instant, where $\bar g(1)=0$, and the process is nonregular precisely at $t^*=1$, with $\bar g_x(1)=1\neq 0$: neither case of Proposition \ref{prop:trivial} applies.
		
		We claim that \emph{every} tuple $(\lambda,\beta,\lambda_0,z,\zeta)$ satisfying the conditions of Theorem \ref{pmp} for $(\bar x,\bar u)$ has $\zeta\neq 0$, and that $\zeta$ is left-concentrated at $t=1$. Suppose indeed that $\zeta=0$; then $\beta\equiv 0$ by the defining formula in Theorem \ref{pmp}(c), and $\lambda$ is absolutely continuous. Since $\bar g(t)=t-1<0$ for $t<1$, the complementary slackness condition forces $z=0$ a.e. The first stationarity condition then reads
		\[0=\bar f_u(t)\lambda(t)+\lambda_0\bar L_u(t)=-\lambda(t)+\lambda_0\quad\text{a.e.},\]
		while the costate equation reduces to $-\dot\lambda=\bar f_x\lambda+\lambda_0\bar L_x=0$ a.e.; together with the transversality condition $\lambda(1)=0$ these give $\lambda\equiv 0$ and $\lambda_0=0$, so that $\lambda_0+\|z\|_{L^1}+\zeta([0,1])=0$, contradicting the nontriviality condition. Hence $\zeta([0,1])>0$. Moreover, for every $\varepsilon>0$ the set $\{|\bar g|>\varepsilon\}=[0,1-\varepsilon)$ is $\zeta$-null by the second complementary slackness condition, so that, $\zeta$ vanishing on Lebesgue null sets,
		\[\zeta\bigl((1-\varepsilon,1)\bigr)=\zeta([0,1])>0\qquad\text{for every }\varepsilon>0:\]
		the charge is left-concentrated at the terminal instant, in the sense of Definition \ref{charges}(v).
	\end{example}
	
	\begin{remark}\label{rem:be43}
		Example \ref{ex:be43} displays an asymmetry between the initial and the terminal instant which will matter later. First, the unavoidable charge does not entail degeneration. Let $\zeta$ be any positive pure charge in $\ba$, left-concentrated at $t=1$ and of mass $c>0$. Then the tuple
		\[\lambda_0=c,\qquad \lambda=c\,\chi_{[0,1)},\qquad z\equiv 0,\qquad \zeta,\qquad \beta=-c\,\chi_{[0,1)}\]
		satisfies all the conditions of Theorem \ref{pmp}: the formula in (c) gives $\beta(t)=-\zeta\bigl((t,1]\bigr)=-c$ for $t\in[0,1)$ and $\beta(1)=0$; the costate equation reduces, since $\bar f_x=\bar L_x\equiv 0$ and $z\equiv 0$, to the jump identity $-d\lambda=d\beta$ at $t=1$; the first stationarity condition reads $-\lambda(t)+\lambda_0=0$ a.e.\ on $[0,1)$; and the second holds because $\bar g_u\equiv 0$. These multipliers are \emph{normal}. Second, no degenerate tuple exists for (T): in a tuple of the form \eqref{trivial} the stationarity condition forces $\bar f_u(t)^T\lambda(t)=-\lambda(t)=0$ a.e., so $\alpha=0$ and $\lambda=\beta=0$ on $(0,1]$. If $t^*>0$, the formula in Theorem \ref{pmp}(c) then yields $\zeta([0,1])=-\beta(0)=0$; if $t^*=0$, the complementary slackness condition $\bar g=0$ $\zeta$-a.e. is itself incompatible with a nonzero charge right-concentrated at the origin, since $|\bar g(t)|=1-t\geq\tfrac12$ on $[0,\tfrac12]$ makes every set $(0,\varepsilon)$ with $\varepsilon<\tfrac12$ $\zeta$-null. Hence $\zeta=0$ by positivity, violating nontriviality. Thus, at a terminal nonregular instant with $\bar g_x(1)\neq 0$, Theorem \ref{pmp} remains informative even though the charge cannot be nullified, in sharp contrast with the complete degeneration at the initial instant exhibited in Subsection \ref{subsec:example} below. This asymmetry resurfaces in Remark \ref{rem:t4scope2}.
	\end{remark}
	
	The worst possible scenario in the application of Theorem~\ref{pmp} occurs when
	all associated multiplier tuples are degenerate. We formalize this situation in the
	following definition.
	
	\begin{definition}
		A problem is called \emph{completely degenerate} if every tuple of multipliers
		satisfying the conditions of Theorem~\ref{pmp} is degenerate.
	\end{definition}
	
	Before proposing conditions that preclude such situations, we examine in detail an
	example that is completely degenerate, in order to expose the mechanism that
	produces this strong degeneration.
	
	\subsection{Complete degeneration related to initial point nonregularity}
	\label{subsec:example}
	
	In this section we dissect \cite[Example 4.4]{be2021}. The problem is regular except at the single instant $t=0$; nevertheless, we will see that Theorem \ref{pmp} degenerates completely on it, and, more significantly, that the degeneration survives every reasonable limiting procedure applied to regular approximations of the problem. The analysis serves two purposes: it explicitly shows the mechanism responsible for the degeneration, thereby motivating the conditions of Section \ref{sec:nondeg}, and it issues a methodological warning concerning compactness arguments for multipliers in $\Linf^*$.
	
	\begin{example}\label{ex:E}
		Consider the problem
		\begin{equation*}
			\text{(E)}~\left\{	\begin{array}{lll}
				&\text{min}&J(x,u)=-\displaystyle\int_0^1 x(t)dt,\\
				& s.t.&(x,u)\in \AC{}\times \Linf,\\
				&&\dot x(t)=u(t) \quad \text{a.e.},\\
				&&tu(t)-t^2\leq 0 \quad \text{a.e.},\\
				&&x(0)=0.
			\end{array}\right.
		\end{equation*}
		Here $g(t,x,u)=tu-t^2$, so the mixed constraint amounts to $u(t)\leq t$ for a.e. $t\in(0,1]$, and imposes no restriction at $t=0$. Consequently, every admissible trajectory satisfies $x(t)=\int_0^tu(s)ds\leq t^2/2$, hence $J(x,u)\geq -1/6$, with equality if and only if $u(t)=t$ a.e. Thus $(\bar x,\bar u)=(t^2/2,\,t)$ is the unique global minimizer of (E). Along it,
		\[\bar g(t)\equiv 0,\qquad \bar g_u(t)=t,\qquad \bar g_x(t)\equiv 0,\]
		so the constraint is active on all of $[0,1]$ and $(\bar x,\bar u)$ is nonregular: $A_\varepsilon=[0,1]$ for every $\varepsilon>0$, and any $v_0$ with $t\,v_0(t)\geq 1$ a.e. is unbounded near the origin. Observe, however, that the nonregularity is confined to $t=0$: Definition \ref{reg} is satisfied on every subinterval $[a,1]$ with $a>0$.
	\end{example}
	
	Our first observation is that the multipliers of (E) can be computed exactly, and that they are degenerate in the strongest possible sense: all the multipliers satisfying the conditions of Theorem \ref{pmp} are degenerate.
	
	\begin{proposition}\label{prop:Emult}
		A tuple $(\lambda, \beta,\lambda_0, z, \zeta)$ satisfies conditions~(a)--(d)
		of Theorem~\ref{pmp} for $(\bar{x}, \bar{u})$ in Example~\ref{ex:E} if and
		only if it is a tuple of degenerate multipliers with $\lambda \equiv \beta \equiv 0$
		and $\zeta$ concentrated at the origin. In particular, Theorem~\ref{pmp} does
		not distinguish among admissible processes of~$(E)$: the problem is completely
		degenerate.
	\end{proposition}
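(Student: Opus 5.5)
The plan is to compute every multiplier directly from the structural data of (E). Along $(\bar x,\bar u)$ we have $\bar f_x=0$, $\bar f_u=1$, $\bar L_x=-1$, $\bar L_u=0$, $\bar g\equiv 0$, $\bar g_x\equiv 0$ and $\bar g_u(t)=t$.

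\emph{Step 1: $\beta$ and $\lambda$.} Since $\bar g_x\equiv 0$, the formula of Theorem \ref{pmp}(c) gives $\beta\equiv 0$, including at $t=0$, whatever $\zeta$ is. The costate equation then reads $-d\lambda=-\lambda_0\,dt$, so $\lambda$ is absolutely continuous. Together with $\lambda(1)=0$ this gives
\[\lambda(t)=\lambda_0(t-1).\]

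\emph{Step 2: $\lambda_0$ and $z$.} The first stationarity condition becomes $0=\lambda(t)+t\,z(t)$ a.e. Hence
\[z(t)=\lambda_0\,\frac{1-t}{t}\quad\text{a.e.}\]
This function is not integrable near $0$ unless $\lambda_0=0$. Since $z\in\Lone$ is required, we get $\lambda_0=0$, and then $z=0$ a.e. and $\lambda\equiv 0$. This integrability obstruction at $t=0$ is the heart of the argument; it is precisely the threshold on $|\bar g_u|^{-1}$ mentioned in the introduction. Nontriviality (a) now forces $\zeta([0,1])>0$.

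\emph{Step 3: localizing $\zeta$.} For each $a\in(0,1)$ the process is regular on $I=[a,1]$ with $v_0=1/a$. By Lemma \ref{lem:chargesupport}, $\zeta(E)=0$ for every measurable $E\subseteq[a,1]$. Alternatively one can argue directly from $\int_E t\,d\zeta=0$ and $t\ge a$ on $E$. Since $\zeta$ vanishes on the Lebesgue-null set $\{0\}$, finite additivity gives
\[\zeta\bigl((0,a)\bigr)=\zeta([0,1])\quad\text{for every } a>0,\]
so $\zeta$ is right-concentrated at $0$. It is pure by hypothesis, and in any case purity follows automatically: a measure $0\le\mu\le\zeta$ satisfies $\mu([a,1])=0$ for all $a>0$ and $\mu(\{0\})=0$, so $\mu=0$ by countable additivity. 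The tuple is therefore degenerate in the sense of Definition \ref{def:trivial}, with $t^*=0$, $\alpha=0$, $\lambda\equiv\beta\equiv0$ and $\lambda(0)=\int\bar g_x\,d\zeta=0$.

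\emph{Step 4: the converse.} Take $\lambda_0=0$, $z=0$, $\lambda\equiv\beta\equiv 0$ and any nonzero positive pure charge $\zeta$ right-concentrated at $0$ (for instance, a multiple of \eqref{eq:ccharge}). Then (a) holds because $\zeta([0,1])>0$. Condition (b) is trivial since $\bar g\equiv0$. Condition (c) holds because $\bar g_x\equiv0$ forces $\beta\equiv0$. For the charge stationarity, fix $E$ and $a>0$; then
\[0\le\int_E t\,d\zeta\le a\,\zeta(E\cap(0,a))+\int_{E\cap[a,1]}t\,d\zeta\le a\,\zeta([0,1]),\]
since $\zeta([a,1])=0$. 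Letting $a\to0$ gives $\int_E t\,d\zeta=0$.

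The only delicate points are Step 2, specifically recognizing that the nonintegrability of $(1-t)/t$ kills $\lambda_0$, and the handling of charge integrals in Steps 3 and 4; the latter are settled by monotonicity of the charge integral. Since degenerate tuples can be produced for every admissible process, Theorem \ref{pmp} does not distinguish among admissible processes of (E).
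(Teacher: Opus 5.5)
Your proof of the equivalence at $(\bar x,\bar u)$ is correct and follows the paper's own route. As there, $\bar g_x\equiv 0$ kills $\beta$, the costate is $\lambda_0(t-1)$, and stationarity forces $z=\lambda_0(1-t)/t$, which is not in $L^1$ unless $\lambda_0=0$. Then $\int_E t\,d\zeta=0$ (or Lemma~\ref{lem:chargesupport}) localizes $\zeta$ at the origin, and purity follows from countable additivity. Your Step~4 is slightly more complete than the paper. The paper calls concentration at the origin ``equivalent'' to $\int_E t\,d\zeta=0$ but argues only one direction, and your estimate $0\le\int_E t\,d\zeta\le a\,\zeta([0,1])$ supplies the other.

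The one unsupported step is your closing sentence. The clause ``Theorem~\ref{pmp} does not distinguish among admissible processes'' requires the degenerate tuple to satisfy (a)--(d) along an \emph{arbitrary} admissible $(x,u)$. There your justification of (b), ``trivial since $\bar g\equiv 0$'', no longer applies, because $g(t,x(t),u(t))=tu(t)-t^2$ need not vanish. Everything else transfers verbatim, since $g_x\equiv 0$, $g_u=t$ and $f_u=1$ do not depend on the process. What you still need is $g=0$ $\zeta$-a.e. This follows from $|tu(t)-t^2|\le t(\|u\|_{L^\infty}+1)$, which places $\{|g|>\varepsilon\}$ inside $(\varepsilon/(\|u\|_{L^\infty}+1),1]$, a $\zeta$-null set by the concentration of $\zeta$. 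Alternatively, you could invoke Proposition~\ref{prop:trivial}(i): along every admissible process, $g$ and $g_u$ have essential limit $0$ at $t=0$. This is exactly the check the paper carries out in its ``sufficiency along an arbitrary admissible process'' paragraph.
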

	
	\begin{proof}
		Since $g$ does not depend on $x$, we have $\bar{g}_x \equiv 0$ along any
		admissible process; hence $\beta \equiv 0$, and $\lambda$ is absolutely continuous.
		
		\emph{Necessity.} Here $f(t,x,u) = u$ and $L(t,x,u) = -x$, so
		$\bar{f}_x \equiv 0$ and $\bar{L}_x \equiv -1$. The costate equation reads
		$\dot{\lambda}(t) = \lambda_0$ a.e., which together with $\lambda(1) = 0$ gives
		$\lambda(t) = \lambda_0(t-1)$. The first stationarity condition in~(d) becomes
		$\lambda(t) + t\,z(t) = 0$ a.e., i.e., $z(t) = \lambda_0(1-t)/t$ a.e.
		If $\lambda_0 > 0$, then $z \notin \Lone$, which is impossible; hence
		$\lambda_0 = 0$, $\lambda \equiv 0$, and $z = 0$ a.e., and the nontriviality
		condition~(a) forces $\zeta([0,1]) > 0$. The second
		stationarity condition in~(d), $t\,d\zeta=0$, reads
		$\int_E t\,d\zeta = 0$ for all $E \in \mathfrak{L}$, and for a positive
		charge this is equivalent to concentration at the origin: choosing
		$E = [\varepsilon, 1]$ yields $0 = \int_E t\,d\zeta \geq \varepsilon\,\zeta(E)$,
		so $\zeta([\varepsilon, 1]) = 0$. Lastly, the complementary slackness
		conditions~(b) hold trivially since $\bar{g} \equiv 0$.
		
		\emph{Purity.} Let $\mu$ be a measure with $0 \leq \mu \leq \zeta$. Then
		$\mu\bigl((1/k, 1]\bigr) \leq \zeta\bigl((1/k, 1]\bigr) = 0$ for every $k$, and
		$\mu(\{0\}) \leq \zeta(\{0\}) = 0$ because $\zeta$ is weakly absolutely continuous
		and $\leb(\{0\}) = 0$. By countable additivity,
		$\mu([0,1]) = \mu(\{0\}) + \lim_k \mu\bigl((1/k,1]\bigr) = 0$,
		so $\mu = 0$ and $\zeta$ is pure.
		
		\emph{Sufficiency along an arbitrary admissible process $(x,u)$.} With
		$\lambda_0 = 0$, $z = 0$, and $\bar{g}_x \equiv 0$, condition~(c) yields
		$\lambda \equiv \beta \equiv 0$, and the first condition in~(d) holds trivially.
		The second condition in~(d), that $\int_E t\,d\zeta = 0$ for every
		$E\in\mathfrak L$, was verified above and does
		not involve $(x,u)$. As for~(b), the first condition holds trivially since
		$z \equiv 0$; for the second condition, observe that
		$|g(t,x(t),u(t))| = |tu(t) - t^2| \leq t(\|u\|_{L^\infty} + 1)$, so for every
		$\varepsilon > 0$ the set $\{t : |g(t,x(t),u(t))| > \varepsilon\}$ is contained in
		$\{t : t > \varepsilon/(\|u\|_{L^\infty}+1)\}$, which is $\zeta$-null; hence
		$g(t,x(t),u(t)) = 0$ $\zeta$-a.e.
	\end{proof}
	
	Since the nonregularity of (E) is confined to $t=0$, it is natural to attempt to recover informative multipliers by approximating (E) with the regular problems obtained by truncating the interval. For $n\geq 2$, consider
	\begin{equation*}
		\text{(E$_n$)}~\left\{	\begin{array}{lll}
			&\text{min}&\displaystyle-\int_{1/n}^1 x(t)dt,\\
			& s.t.&(x,u)\in W^{1,1}([\tfrac1n,1];\R)\times L^\infty([\tfrac1n,1];\R),\\
			&&\dot x(t)=u(t) \quad \text{a.e. on } [\tfrac1n,1],\\
			&&tu(t)-t^2\leq 0 \quad \text{a.e.},\\
			&&x(\tfrac1n)=\tfrac{1}{2n^2}.
		\end{array}\right.
	\end{equation*}
	The restriction of $(\bar x,\bar u)$ to $[1/n,1]$ is the unique global minimizer of (E$_n$), by the same argument as in Example \ref{ex:E}, and it is now \emph{regular}: the constant function $v_0\equiv n$ yields $\bar g_u(t)v_0=nt\geq 1$ on $[1/n,1]$. The next lemma shows that each (E$_n$) is as well behaved as one could wish: its multipliers are unique up to a positive scalar, normal, and free of charges.
	
	\begin{lemma}\label{lem:En}
		The tuples satisfying conditions (a)--(d) of Theorem \ref{pmp} (transcribed to $[1/n,1]$; see Remark \ref{rem:interval}) for (E$_n$) at $(\bar x,\bar u)|_{[1/n,1]}$ are exactly the positive scalar multiples of
		\[\lambda_0^n=1,\qquad \lambda_n(t)=t-1,\qquad z_n(t)=\frac{1-t}{t},\qquad \zeta_n=0,\qquad \beta_n\equiv 0.\]
	\end{lemma}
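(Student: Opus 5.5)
The plan is to prove both directions: first that every admissible tuple for (E$_n$) is a positive multiple of the displayed one, and then that the displayed tuple indeed satisfies (a)--(d) of Theorem \ref{pmp} on $[1/n,1]$.

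\emph{The charge vanishes.} On $[1/n,1]$ the restricted process is regular, since $\bar g_u(t)v_0=nt\geq 1$ with $v_0\equiv n$ and $A_\varepsilon=[1/n,1]$. Lemma \ref{lem:chargesupport}, in the interval version of Remark \ref{rem:interval}, then gives $\zeta(E)=0$ for every measurable $E\subseteq[1/n,1]$, so $\zeta=0$. Alternatively one can argue directly from $\int_E t\,d\zeta=0$ and $t\geq 1/n$. Since moreover $\bar g_x\equiv 0$, the formula in Theorem \ref{pmp}(c) yields $\beta\equiv 0$, and $\lambda$ is absolutely continuous.

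\emph{Solving for the remaining multipliers.} With $\bar f_x\equiv 0$, $\bar L_x\equiv -1$ and $\bar g_x\equiv 0$, the costate equation becomes $\dot\lambda=\lambda_0$. Together with $\lambda(1)=0$ this gives $\lambda(t)=\lambda_0(t-1)$. The first stationarity condition reads $\lambda(t)+t\,z(t)=0$, so $z(t)=\lambda_0(1-t)/t$ a.e. on $[1/n,1]$. This function is bounded by $n$ times a constant, hence lies in $L^1([1/n,1];\R)$, so no obstruction of the kind seen in Proposition \ref{prop:Emult} arises.

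\emph{Normality.} If $\lambda_0=0$, then $\lambda\equiv 0$, $z=0$ and $\zeta=0$, which contradicts nontriviality (a). Hence $\lambda_0>0$, and the whole tuple equals $\lambda_0$ times the displayed one.

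\emph{Sufficiency.} For any $c>0$, I will check each condition for the tuple $c\,(1,\ t-1,\ (1-t)/t,\ 0,\ 0)$:
\begin{itemize}
\item $z\geq 0$ on $[1/n,1]$, and $z\in L^1$.
\item Complementary slackness holds trivially, because $\bar g\equiv 0$ and $\zeta=0$.
\item Transversality holds, since $\lambda(1)=\beta(1)=0$.
\item The costate equation holds by construction.
\item Both stationarity conditions hold: the first by the definition of $z$, and the second because $\zeta=0$.
\end{itemize}

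No step is a genuine obstacle. The only point needing care is applying Lemma \ref{lem:chargesupport} on the truncated interval via Remark \ref{rem:interval}, and it is covered by the lemma as stated.
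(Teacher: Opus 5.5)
Your proof is correct and follows the paper's argument. The paper reaches the same steps in the same order: $\bar g_x\equiv 0$ kills $\beta$, and the charge vanishes on $[1/n,1]$. The paper gets the latter from the direct estimate $0=\int_E t\,d\zeta\geq \tfrac1n\zeta([\tfrac1n,1])$, your stated alternative, rather than through Lemma \ref{lem:chargesupport}, which amounts to the same thing. Then the costate equation and the first stationarity condition determine $\lambda$ and $z$ up to the factor $\lambda_0>0$.

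Your explicit sufficiency check, which the paper leaves implicit, is a welcome addition. Add the nontriviality condition to its list; it holds trivially since $\lambda_0=c>0$.
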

	
	\begin{proof}
		As before, $\bar g_x\equiv 0$ forces $\beta\equiv 0$ and $\lambda$ absolutely continuous. The second stationarity condition gives $\int_E t\,d\zeta_n=0$ for all measurable $E\subseteq[1/n,1]$; taking $E=[1/n,1]$ and using $t\geq 1/n$ there, $0=\int_E t\,d\zeta_n\geq \tfrac1n\zeta_n([\tfrac1n,1])$, hence $\zeta_n=0$. The costate equation and $\lambda(1)=0$ give $\lambda(t)=\lambda_0(t-1)$, and the first stationarity condition gives $z(t)=\lambda_0(1-t)/t$, which now belongs to $L^1([1/n,1];\R)$. If $\lambda_0=0$, then $\lambda\equiv 0$ and $z\equiv 0$, violating nontriviality; hence $\lambda_0>0$ and we may scale $\lambda_0=1$.
	\end{proof}
	
	The pathology emerges only in the limit $n\to\infty$. Normalize the multipliers of (E$_n$) without relabeling, so that
	\begin{equation}\label{normEn}
		\lambda_0^n+\|z_n\|_{L^1}=1,
	\end{equation}
	that is, divide the tuple of Lemma \ref{lem:En} by $1+a_n$, where
	\[a_n\coloneqq \int_{1/n}^1\frac{1-t}{t}\,dt=\ln n-1+\frac1n.\]
	Extend $z_n$ by zero to $[0,1]$ and let $\mu_n\in\text{ca}_+([0,1],\mathfrak{L},\leb)$ denote the associated measure $d\mu_n = z_n(t)dt$.
	
	\begin{proposition}\label{prop:limits}
		With the normalization \eqref{normEn},
		\[\lambda_0^n=\Big(\ln n+\frac1n\Big)^{-1}\longrightarrow 0,\qquad \|\lambda_n\|_{L^\infty}=\lambda_0^n\Big(1-\frac1n\Big)\longrightarrow 0,\qquad \|z_n\|_{L^1}\longrightarrow 1,\]
		while, for every $\varepsilon\in(0,1)$, $\mu_n([\varepsilon,1])\leq \lambda_0^n\ln(1/\varepsilon)\to 0$: the mass of $z_n$ drifts towards the nonregular instant. Moreover:
		\begin{enumerate}[label=(\roman*)]
			\item no subsequence of $(z_n)$ converges weakly in $\Lone$;
			\item regarded as Borel measures, $\mu_n\rightharpoonup\delta_0$ in the weak$^*$ topology of $\C{}^*$, where $\delta_0$ is the Dirac measure at the origin; however, $\delta_0$ fails to be weakly absolutely continuous with respect to $\leb$ and is therefore not an admissible multiplier;
			\item the sequence $(\mu_n)$ has no limit in the weak$^*$ topology of $\Linf^*$: for $\psi(t)=\sin(\ln\ln(e/t))$, $t\in(0,1]$, the sequence $\big(\int_0^1\psi\,d\mu_n\big)$ clusters at every point of $[-\tfrac{\sqrt2}{2},\tfrac{\sqrt2}{2}]$;
			\item for every Banach limit $\blim$, the set function
			\[\omega(E)\coloneqq \blim_n\ \mu_n(E),\qquad E\in\mathfrak{L},\]
			is a positive pure charge in $\mbox{\emph{ba}}([0,1],\mathfrak{L},\leb)$, concentrated at the origin and with $\omega([0,1])=1$. Every weak$^*$ cluster point of $(\mu_n)$ in $\Linf^*$ enjoys the same properties.
		\end{enumerate}
		Consequently, every generalized limit of the normalized multipliers of (E$_n$), whether a Banach limit or a weak$^*$ cluster point along a subnet, is a tuple of degenerate multipliers.
	\end{proposition}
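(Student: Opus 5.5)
The plan is to compute everything explicitly from Lemma \ref{lem:En}. After dividing by $1+a_n=\ln n+\tfrac1n$ we get
\[
\lambda_0^n=(\ln n+\tfrac1n)^{-1},\qquad \lambda_n(t)=\lambda_0^n(t-1),\qquad z_n(t)=\lambda_0^n\,\frac{1-t}{t}\ \text{on }[\tfrac1n,1].
\]
Hence $\|\lambda_n\|_{L^\infty}=\lambda_0^n(1-\tfrac1n)$ and $\|z_n\|_{L^1}=1-\lambda_0^n\to1$. For $\varepsilon\in(0,1)$ we have $\mu_n([\varepsilon,1])\le\lambda_0^n\int_\varepsilon^1 t^{-1}dt=\lambda_0^n\ln(1/\varepsilon)$. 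These are routine.

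For (i), I argue by contradiction. If a subsequence converged weakly in $\Lone$, its limit $z$ would satisfy $\int_E z=\lim\mu_{n_k}(E)=0$ for every $E\subseteq[\varepsilon,1]$, so $z=0$. But then $\int z=0$, whereas $\int z_{n_k}\to1$. (Equivalently, the sequence is not uniformly integrable, by Dunford--Pettis.)

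For (ii), take $\phi\in\C{}$. Split $\int\phi\,d\mu_n$ over $[0,\varepsilon]$ and $[\varepsilon,1]$. The second piece tends to zero by the drift estimate. The first lies within $\sup_{[0,\varepsilon]}|\phi-\phi(0)|$ of $\phi(0)\mu_n([0,\varepsilon])$, and $\mu_n([0,\varepsilon])\to1$. Since $\delta_0(\{0\})=1$ while $\leb(\{0\})=0$, $\delta_0$ is not weakly absolutely continuous.

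For (iii), substitute $s=\ln\ln(e/t)$. Then $\tfrac{dt}{t}=-e^{s}\,ds$ up to the factor $\ln(e/t)=e^s$, and $t=1/n$ corresponds to $S_n=\ln(1+\ln n)$. Dropping the harmless factor $(1-t)$ (it contributes $O(\lambda_0^n)$) gives
\[
\int\psi\,d\mu_n\approx\lambda_0^n\int_0^{S_n}\sin(s)\,e^{s}\,ds=\lambda_0^n\,\tfrac12\,e^{S_n}\bigl(\sin S_n-\cos S_n\bigr)+o(1).
\]
Since $\lambda_0^n e^{S_n}=(1+\ln n)/(\ln n+\tfrac1n)\to1$, this equals $\tfrac{\sqrt2}{2}\sin(S_n-\tfrac\pi4)+o(1)$. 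Because $S_n\to\infty$ with increments $S_{n+1}-S_n\to0$, the values $\sin(S_n-\tfrac\pi4)$ are dense in $[-1,1]$ along subsequences. So the cluster set is all of $[-\tfrac{\sqrt2}{2},\tfrac{\sqrt2}{2}]$, and no weak$^*$ limit exists, since $\psi\in\Linf$. This slowly-varying-phase computation is the main obstacle. The care needed is in controlling the error terms uniformly and in justifying density of the cluster set from $S_n\to\infty$ together with $S_{n+1}-S_n\to0$.

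For (iv), set $\omega(E)=\blim\mu_n(E)$. Linearity and positivity of $\blim$ give finite additivity and $\omega\ge0$. If $\leb(E)=0$, then $\mu_n(E)=0$, so $\omega(E)=0$, which gives absolute continuity. Also $\omega([0,1])=\blim(1-\lambda_0^n)=1$ by property (iv) of Banach limits, and $\omega([\varepsilon,1])=0$ by the drift estimate. Hence $\omega((0,\varepsilon))=1$, so $\omega$ is concentrated at the origin, and purity follows exactly as in the purity step of Proposition \ref{prop:Emult}.

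A weak$^*$ cluster point $\omega'$ along a subnet has the same properties. Testing against $\chi_E$ gives positivity, absolute continuity and total mass $1$. Testing against $\chi_{[\varepsilon,1]}$ gives $\omega'([\varepsilon,1])=0$.

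For the final claim, I use that $\lambda_0^n\to0$ and $\|\lambda_n\|_{L^\infty}\to0$, so the limits of these components vanish. The limit of the $z$-component is the charge $\omega$, which is pure. By the Yosida--Hewitt decomposition \eqref{YH}, it therefore contributes only to the pure-charge slot $\zeta$, with $z\equiv0$. Since $\bar g_x\equiv0$ we get $\beta\equiv0$, so the limiting tuple $(0,0,0,0,\omega)$ is degenerate in the sense of Definition \ref{def:trivial}, consistent with Proposition \ref{prop:Emult}.
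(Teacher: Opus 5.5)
Your proposal is correct and follows the paper's proof essentially step by step: the explicit normalization, testing against $\chi_{[\varepsilon,1]}$ and $\mathbf 1$ in (i), splitting near the origin in (ii), consistency of Banach limits plus the purity argument of Proposition \ref{prop:Emult} in (iv), and the identification of every generalized limit as a tuple whose only nonzero slot is the charge $\omega$. The only differences are cosmetic. In (iii) your substitution $s=\ln\ln(e/t)$ produces $\int_0^{S_n}e^s\sin s\,ds$, whereas the paper's $s=\ln(e/t)$ produces $\int_1^{1+\ln n}\sin(\ln s)\,ds$; this is the same integral with the same phase $S_n=\theta_n=\ln(1+\ln n)$. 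Also, your parenthetical in (i) should say that no \emph{subsequence} is uniformly integrable, not merely the sequence.
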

	
	\begin{proof}
		The normalization \eqref{normEn} gives $\lambda_0^n(1+a_n)=1$, i.e., $\lambda_0^n=(\ln n+1/n)^{-1}$; the expressions for $\|\lambda_n\|_{L^\infty}$ and $\|z_n\|_{L^1}=1-\lambda_0^n$ are immediate, and for $n>1/\varepsilon$,
		\[\mu_n([\varepsilon,1])=\lambda_0^n\int_\varepsilon^1\frac{1-t}{t}dt\leq\lambda_0^n\int_\varepsilon^1\frac{dt}{t}=\lambda_0^n\ln\frac1\varepsilon.\]
		
		(i) Suppose $z_{n_k}\rightharpoonup w$ weakly in $L^1$. Testing against $\chi_{[\varepsilon,1]}\in L^\infty$ yields $\int_\varepsilon^1w\,dt=\lim_k\mu_{n_k}([\varepsilon,1])=0$ for every $\varepsilon>0$, so $w=0$ a.e.; testing against $\mathbf 1$ yields $\int_0^1 w\,dt=\lim_k\|z_{n_k}\|_{L^1}=1$, a contradiction.
		
		(ii) Let $\varphi\in \C{}$ and $\delta>0$. Then
		\[\Big|\int_0^1\varphi\,d\mu_n-\varphi(0)\|z_n\|_{L^1}\Big|\leq\sup_{[0,\delta]}|\varphi-\varphi(0)|\,\|z_n\|_{L^1}+2\|\varphi\|_{L^\infty}\,\mu_n([\delta,1]).\]
		Letting $n\to\infty$ and then $\delta\to 0$, and using $\|z_n\|_{L^1}\to1$, we get $\int_0^1\varphi\,d\mu_n\to\varphi(0)$.
		
		(iii) Clearly $\psi\in L^\infty$ with $\|\psi\|_{L^\infty}\leq 1$. Write
		\[\int_0^1\psi\,d\mu_n=\lambda_0^n\int_{1/n}^1\psi(t)\frac{dt}{t}-\lambda_0^n\int_{1/n}^1\psi(t)\,dt,\]
		and note that the second term is $O(\lambda_0^n)$. The substitution $s=\ln(e/t)$ turns the first integral into
		\[\int_1^{1+\ln n}\sin(\ln s)\,ds=\Big[\frac{s}{2}\big(\sin\ln s-\cos\ln s\big)\Big]_1^{1+\ln n}=\frac{(1+\ln n)\sqrt2}{2}\,\sin\Big(\theta_n-\frac\pi4\Big)+\frac12,\]
		where $\theta_n\coloneqq \ln(1+\ln n)$. Since $\lambda_0^n(1+\ln n)\to 1$, we obtain
		\[\int_0^1\psi\,d\mu_n=\frac{\sqrt2}{2}\sin\Big(\theta_n-\frac\pi4\Big)+o(1).\]
		The sequence $(\theta_n)$ increases to infinity with $\theta_{n+1}-\theta_n\to 0$, so its image modulo $2\pi$ is dense in $[0,2\pi]$; the cluster set of $\big(\int\psi\,d\mu_n\big)$ is therefore the full interval $[-\tfrac{\sqrt2}{2},\tfrac{\sqrt2}{2}]$, and the sequence does not converge.
		
		(iv) Finite additivity of $\omega$ follows from the linearity of $\blim$ and the additivity of each $\mu_n$, with $\omega(\emptyset)=0$; positivity and the bound $0\leq\omega(E)\leq\sup_n\|z_n\|_{L^1}\leq 1$ follow from the positivity and consistency of $\blim$. If $\leb(E)=0$, then $\mu_n(E)=0$ for all $n$ and $\omega(E)=0$, so $\omega\in\mbox{ba}([0,1],\mathfrak{L},\leb)$. Consistency implies as well that $\omega([\varepsilon, 1])=\blim_n\mu_n([\varepsilon,1])=0$ for every $\varepsilon>0$ and $\omega([0,1])=\blim_n(1-\lambda_0^n)=1$. Purity now follows exactly as in the proof of Proposition \ref{prop:Emult}.
		
		Finally, if $\omega'$ is a weak$^*$ cluster point of $(\mu_n)$ along a subnet, testing against $\mathbf 1$, $\chi_{[\varepsilon,1]}$ and $\chi_E$ with $\leb(E)=0$ shows that $\omega'$ has total mass 1, is concentrated at the origin and is weakly absolutely continuous, hence pure by the same argument.
		
		For the final claim, recall that $\lambda_0^n\to 0$, $\lambda_n\to 0$ uniformly, and $\zeta_n=0$, $\beta_n\equiv 0$ for every $n$; thus every generalized limit of the tuple $(\lambda_0^n,z_n,\zeta_n,\lambda_n,\beta_n)$ is of the form $(0,0,\omega,0,0)$ with $\omega$ as above, concluding the proof.
	\end{proof}
	
	\begin{remark}\label{rem:mech}
		The computations above expose the mechanism of degeneration. The stationarity
		condition pins the integrable multiplier to
		$z_n(t) = \lambda_0^n(1-t)/\bar{g}_u(t)$ with $\bar{g}_u(t) = t$, and
		$1/\bar{g}_u$ is \emph{not} integrable near the nonregular instant. Any
		normalization must therefore crush the cost multiplier at a logarithmic rate and
		transfer the entire mass to $z_n$, pushing it toward $t = 0$. In the limit, this
		mass can only be represented by a pure charge sitting ``immediately to the right
		of'' the origin: an atom at $\{0\}$ itself is forbidden by weak absolute
		continuity, in contrast with the state-constraint setting, where degeneration
		materializes as an atom of the measure multiplier at the initial time
		\cite{ferreira,R-V}. Note that the limiting charge is invisible to every test
		function vanishing near the origin: $\blim_n\int\psi\,d\mu_n = 0$ whenever
		$\psi \in L^\infty$ vanishes a.e.\ on a neighborhood of $0$; the only surviving
		feature is the total mass, lodged at the nonregular instant, which carries no
		information about the minimizer. This location of the mass is an instance of
		the support principle of Lemma \ref{lem:chargesupport}: the process being
		regular on every $[\delta,1]$, any surviving pure charge must vanish there,
		and can therefore only sit at the nonregular instant $t=0$.
		
		Two structural features of~$(E)$ make this escape of mass possible: (a) the
		constraint is active on a full neighborhood of the nonregular instant; and
		(b)~$\bar{g}_x \equiv 0$, so the costate equation exerts no control whatsoever
		on the accumulation of $z_n$ near $t = 0$.
	\end{remark}
	
	\begin{remark}\label{rem:method}
		Items~(i)--(iv) of Proposition~\ref{prop:limits} raise several points worth
		emphasizing for the proofs of Section~\ref{sec:nondeg}. Bounded sequences of
		multipliers in $\Linf^*$ need not contain weak$^*$ convergent
		subsequences; item~(iii) exhibits this concretely, so compactness must
		be invoked either through the Banach--Alaoglu theorem along subnets, or, as in
		item~(iv), through Banach limits, which assign to every bounded sequence a
		canonical generalized limit. The Yosida--Hewitt decomposition~\eqref{YH} then
		splits any such limit $\omega$ uniquely as $\omega = z\,d\leb + \zeta$ with
		$z \in L^1$ and $\zeta$ pure, recovering the structure of the multipliers of
		Theorem~\ref{pmp}. Furthermore, the example shows that approximating a problem
		by auxiliary problems, each of them regular in the sense of Definition \ref{reg} and having nondegenerate multipliers, is no guarantee of nondegeneracy in the limit. When, on the contrary, the approximating densities admit a fixed integrable majorant, even weak $L^1$ compactness (the Dunford--Pettis theorem) becomes unnecessary: the domination alone renders the limit charge countably additive and annihilates its pure part, as in the proofs of Theorems \ref{t1} and \ref{t2} below.
	\end{remark} 
	
	\subsection{Abnormality related to end point nonregularity}
	
	The previous study identifies the lack of integrability of $|\bar g_u|^{-1}$ near an initial nonregular instant as the main trigger for complete degeneracy. The next proposition evidences an asymmetry between initial and final nonregular instants; nonintegrability around the latter may trigger either \emph{complete abnormality}, or force every normal tuple to carry a \emph{nonvanishing pure charge}.
	\begin{proposition}\label{prop:termbalance}
		Let $(\bar x,\bar u)$ be a weak local minimizer for (P) such that, for some $\delta_0\in(0,1)$,
		\begin{equation}\label{nonintterm}
			\int_{1-\delta}^1\frac{dt}{|\bar g_u(t)|}=\infty\qquad\text{for every }\delta\in(0,\delta_0],
		\end{equation}
		with the convention $1/0=+\infty$, and suppose that each entry of $\bar f_u$, $\bar L_u$ and $\bar g_x$ admits an essential limit at $t=1$. Let $(\lambda,\beta,\lambda_0,z,\zeta)$ be any tuple satisfying the conditions of Theorem \ref{pmp}, and set
		\[\zeta_1\coloneqq\lim_{t\to 1^-}\zeta\bigl((t,1]\bigr)\in\bigl[0,\zeta([0,1])\bigr].\]
		Then the costate has left limit $\lambda(1^-)=\zeta_1\,\bar g_x(1)^T$, and the \emph{terminal balance identity}
		\begin{equation}\label{balance}
			\lambda_0\,\bar L_u(1)^T+\zeta_1\,\bar f_u(1)^T\bar g_x(1)^T=0
		\end{equation}
		holds. In particular:
		\begin{enumerate}[label=(\roman*)]
			\item if $\bar L_u(1)\neq 0$ and $\bar L_u(1)^T\neq-c\,\bar f_u(1)^T\bar g_x(1)^T$ for every $c>0$, then $\lambda_0=0$; if moreover $\bar f_u(1)^T\bar g_x(1)^T\neq 0$, then also $\zeta_1=0$;
			\item if $\bar L_u(1)^T=-c_0\,\bar f_u(1)^T\bar g_x(1)^T$ for some $c_0>0$ and $\bar f_u(1)^T\bar g_x(1)^T\neq 0$, then $\zeta_1=c_0\lambda_0$: every normal tuple carries a pure charge placing mass at least $c_0\lambda_0>0$ on every interval $(1-\varepsilon,1)$.
		\end{enumerate}
	\end{proposition}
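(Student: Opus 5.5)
The plan is to read off $\lambda(1^-)$ from the jump structure of the costate at $t=1$. Then I will show that the integrable multiplier $z$ cannot contribute at the terminal instant, by combining the first stationarity condition with the nonintegrability hypothesis \eqref{nonintterm}.

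\emph{Step 1 (left limit of the costate).} From Theorem \ref{pmp}(c), $\lambda(1)=\beta(1)=0$. For $t<1$ close to $1$, integrating the costate equation over $(t,1]$ gives
\[
\lambda(t)=\int_t^1\bigl[\bar f_x^T\lambda+\lambda_0\bar L_x^T+z\,\bar g_x^T\bigr]\,ds+\beta(1)-\beta(t).
\]
The integral tends to $0$ as $t\to1^-$, because its integrand lies in $L^1$: $\lambda$ is bounded and $z\bar g_x^T\in L^1$. It therefore suffices to compute $-\beta(1^-)=\lim_{t\to1^-}\lim_k\int_{t+1/k}^1\bar g_x^T\,d\zeta$. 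Write $\bar g_x=\bar g_x(1)+r$, where $|r|<\eta$ a.e. on $(1-\delta_\eta,1]$ by the essential limit hypothesis. Then
\[
\int_{(s,1]}\bar g_x^T\,d\zeta=\bar g_x(1)^T\zeta((s,1])+O\bigl(\eta\,\zeta((s,1])\bigr).
\]
The quantity $\zeta((s,1])$ is monotone in $s$ and bounded, so it has a limit. The double limit therefore exists and equals $\zeta_1\bar g_x(1)^T$, which gives $\lambda(1^-)=\zeta_1\bar g_x(1)^T$.

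\emph{Step 2 (killing the $z$-term near $1$).} This is the main obstacle. The first stationarity condition reads $\bar g_u^T z=-(\bar f_u^T\lambda+\lambda_0\bar L_u^T)=:-\Phi(t)$ a.e. Near $1$, $\Phi(t)\to\Phi_1:=\bar f_u(1)^T\zeta_1\bar g_x(1)^T+\lambda_0\bar L_u(1)^T$, using Step 1 together with the essential limits of $\bar f_u$ and $\bar L_u$. Suppose $\Phi_1\neq0$. Then $|\Phi(t)|\ge|\Phi_1|/2$ a.e. on some $(1-\delta,1)$ with $\delta\le\delta_0$. Since $|\bar g_u|\,z\ge|\bar g_u^T z|=|\Phi|$, we get $z\ge|\Phi_1|/(2|\bar g_u|)$ a.e. there, where $z\ge0$ and the convention $1/0=\infty$ covers points with $\bar g_u=0$ (there the equation would force $\Phi=0$, a contradiction unless that set is null). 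Hypothesis \eqref{nonintterm} then gives $\int_{1-\delta}^1 z=\infty$, contradicting $z\in L^1$. Hence $\Phi_1=0$, which is exactly the balance identity \eqref{balance}. The delicate points are the vector-valued form of $\bar g_u$ (handled by Cauchy--Schwarz, as above) and the use of essential rather than pointwise limits; both are harmless because every statement is required to hold only a.e.

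\emph{Step 3 (consequences).} For (i), suppose $\lambda_0>0$. Then \eqref{balance} gives $\bar L_u(1)^T=-(\zeta_1/\lambda_0)\bar f_u(1)^T\bar g_x(1)^T$. If $\zeta_1>0$ this is excluded by hypothesis; if $\zeta_1=0$ it forces $\bar L_u(1)=0$, again excluded. Therefore $\lambda_0=0$, and \eqref{balance} reduces to $\zeta_1\bar f_u(1)^T\bar g_x(1)^T=0$, which yields $\zeta_1=0$ when that vector is nonzero. For (ii), substitute the hypothesis into \eqref{balance} to get $(\zeta_1-c_0\lambda_0)\bar f_u(1)^T\bar g_x(1)^T=0$, hence $\zeta_1=c_0\lambda_0$. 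Monotonicity then gives $\zeta((1-\varepsilon,1])\ge\zeta_1$. Finally, $\zeta(\{1\})=0$ by weak absolute continuity, so $\zeta((1-\varepsilon,1))\ge c_0\lambda_0$.
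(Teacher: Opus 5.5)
Your proposal is correct and follows essentially the paper's own argument: the paper likewise shows that the essential limit at $t=1$ of $h=\bar f_u^T\lambda+\lambda_0\bar L_u^T$ must vanish, since otherwise $z\geq\rho/(2|\bar g_u|)$ near $1$ would contradict \eqref{nonintterm}, and it obtains $\lambda(1^-)=\zeta_1\,\bar g_x(1)^T$ by freezing $\bar g_x$ at its essential limit inside $\int_{(t,1]}\bar g_x^T\,d\zeta$. The differences are cosmetic: you get $\lambda=p-\beta$ near $1$ by integrating the measure equation over $(t,1]$ and using the defining formula of $\beta$ directly, instead of the a.e.\ identification of Lemma \ref{lem:betacharge}; you do the two steps in the opposite order; and you also justify the final claim of (ii) via monotonicity and $\zeta(\{1\})=0$, which the paper leaves implicit.
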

	
	\begin{proof}
		Write $h(t)\coloneqq\bar f_u(t)^T\lambda(t)+\lambda_0\bar L_u(t)^T\in\R^m$. The first stationarity condition of Theorem \ref{pmp}(d) reads $h(t)=-\bar g_u(t)^Tz(t)$ a.e., and since $z\geq 0$ a.e.,
		\begin{equation}\label{hz}
			|h(t)|=z(t)\,|\bar g_u(t)|\qquad\text{a.e.}
		\end{equation}
		
		On the other hand, being of bounded variation, $\lambda$ possesses a left limit $\lambda(1^-)$ which coincides with its essential limit; together with the essential limits of $\bar f_u$ and $\bar L_u$ at $1$, this shows that $h$ admits the essential limit
		\[h(1)=\bar f_u(1)^T\lambda(1^-)+\lambda_0\,\bar L_u(1)^T\]
		at $t=1$ (componentwise, in the sense of Definition \ref{def:limess}). Suppose $\rho\coloneqq|h(1)|>0$. Then there is $\delta\in(0,\delta_0]$ with $|h(t)|\geq\rho/2$ a.e. on $(1-\delta,1)$; at almost every such $t$, \eqref{hz} forces $|\bar g_u(t)|>0$ together with
		\[z(t)=\frac{|h(t)|}{|\bar g_u(t)|}\geq\frac{\rho}{2\,|\bar g_u(t)|},\]
		so \eqref{nonintterm} gives $\int_{1-\delta}^1z\,dt=\infty$, contradicting $z\in\Lone$. Hence $h(1)=0$.
		
		In view of the fact that the function $t\mapsto\zeta((t,1])$ is non-increasing by the positivity and finite additivity of $\zeta$, the limit defining $\zeta_1$ exists. Moreover, $\lambda+\beta$ coincides a.e. with an absolutely continuous function $p$ with $p(1)=0$, while $\beta$ coincides a.e. with the function $t\mapsto-\int_{(t,1]}\bar g_x(s)^T\,d\zeta$, by Lemma \ref{lem:betacharge} below. It follows that
		\[\lambda(1^-)=p(1)-\lim_{t\to1^-}\Bigl(-\int_{(t,1]}\bar g_x(s)^T\,d\zeta\Bigr)=\lim_{t\to1^-}\int_{(t,1]}\bar g_x(s)^T\,d\zeta.\]
		Fix $\varepsilon>0$ and choose $\delta>0$ with $|\bar g_x(s)-\bar g_x(1)|\leq\varepsilon$ a.e. on $(1-\delta,1)$. Since $\zeta$ vanishes on Lebesgue null sets, uniform approximation by simple functions gives, for every $t\in(1-\delta,1)$,
		\[\Bigl|\int_{(t,1]}\bar g_x(s)^T\,d\zeta-\bar g_x(1)^T\,\zeta\bigl((t,1]\bigr)\Bigr|=\Bigl|\int_{(t,1]}\bigl(\bar g_x(s)-\bar g_x(1)\bigr)^T\,d\zeta\Bigr|\leq\varepsilon\,\zeta\bigl((t,1]\bigr)\leq\varepsilon\,\zeta([0,1]).\]
		Letting $t\to1^-$ and then $\varepsilon\to0^+$ yields $\lambda(1^-)=\zeta_1\,\bar g_x(1)^T$; substituting into $h(1)=0$ produces \eqref{balance}.
		
		Now, we examine the cases in (i) and (ii). In case (i), if $\lambda_0>0$, then \eqref{balance} gives $\zeta_1\,\bar f_u(1)^T\bar g_x(1)^T=-\lambda_0\,\bar L_u(1)^T\neq 0$, so $\zeta_1>0$ and $\bar L_u(1)^T=-(\zeta_1/\lambda_0)\,\bar f_u(1)^T\bar g_x(1)^T$, which is excluded; hence $\lambda_0=0$, and then $\zeta_1\,\bar f_u(1)^T\bar g_x(1)^T=0$. Furthermore, $\zeta_1=0$ whenever $\bar f_u(1)^T\bar g_x(1)^T\neq 0$. In case (ii), substituting $\bar L_u(1)^T=-c_0\,\bar f_u(1)^T\bar g_x(1)^T$ into \eqref{balance} yields $(\zeta_1-c_0\lambda_0)\,\bar f_u(1)^T\bar g_x(1)^T=0$, hence $\zeta_1=c_0\lambda_0$.
	\end{proof}
	
	\begin{remark}\label{rem:termbalancea}
		
		Proposition \ref{prop:termbalance} admits a variant free of any integrability requirement: if the constraint is inactive a.e. on some interval $(1-\eta,1)$, as in problem (T) in Example~\ref{ex:be43}, then the complementary slackness condition gives $z=0$ a.e. there, the stationarity condition yields $h=0$ a.e. near $1$ and the essential limit $h(1)=0$ holds without \eqref{nonintterm}. For (T) one has $\bar L_u\equiv 1$, $\bar f_u\equiv-1$ and $\bar g_x\equiv 1$ (here \eqref{nonintterm} does hold anyway, since $\bar g_u\equiv 0$), so case (ii) applies with $c_0=1$: \emph{every} tuple of (T) satisfies $\zeta_1=\lambda_0$. This sharpens the findings of Example \ref{ex:be43} provided in \cite{be2021}.
	\end{remark}
	
	\begin{remark}\label{rem:termbalanceb}	
		Hypothesis \eqref{nonintterm} cannot be dropped. In the integrable terminal regime of Example \ref{ex:terminalint} the tuples \eqref{Etermfamily} have $h(t)=\lambda(t)+2\lambda_0$, with essential limit $C=2\lambda_0-m$ at $t=1$: nonzero whenever $m<2\lambda_0$. Accordingly $\zeta_1=m$ sweeps the whole interval $[0,2\lambda_0]$ and no identity ties the terminal mass to the cost multiplier: the integrability of $|\bar g_u|^{-1}$ is exactly the condition that ensures the terminal mass decouples from $\lambda_0$.
	\end{remark}
	
	\section{Nondegeneracy and normality conditions}\label{sec:nondeg}
	
	In this section, we propose several conditions to ensure nondegeneracy and obtain further desirable properties of the multipliers. Conditions \textbf{(C1)}--\textbf{(C3)} address a nonregular \emph{initial} instant by approximation with regular truncations, and yield multipliers that are nondegenerate, free of charges and, under an inward-pointing requirement, normal (Theorems \ref{t1}, \ref{t2} and \ref{t3}). Condition \textbf{(C4)} dispenses with approximation altogether and forces \emph{every} tuple to be normal, together with a bound on the mass of its charge (Theorem \ref{t4}); by Remark \ref{rem:t4scope1} it is unavailable at the origin, so the two groups of results complement rather than subsume one another. The conditions in Subsection \ref{subsec:mixed} bridge these cases, covering processes that are nonregular at both endpoints at once (Theorem \ref{t5}).
	
	\subsection{Nonregularity at the initial instant under regular approximations}
	
	Throughout this subsection, we suppose that we can approximate by auxiliary \emph{regular} problems:
	
	\begin{itemize}
		\item[\textbf{(R)}] for every $\delta\in(0,1)$ the restriction of $(\bar x,\bar u)$ to $[\delta,1]$ is regular in the sense of Definition~ \ref{reg}
	\end{itemize}
	
	The conditions on the next result not only guarantee the existence of a normal tuple of multipliers; they ensure as well that the charge vanishes, which in turn implies the absolute continuity of the costate.
	
	\begin{theorem}\label{t1}
		Assume \textbf{(R)} and let $(\bar x,\bar u)$ be a weak local minimizer for (P) satisfying $\bar g(0)=0$ and $\bar g_u(0)=0$. Suppose the following condition holds:
		\begin{itemize}
			\item[\textbf{(C1)}] there exists $v\in\Lone[m]$ such that, for a.e. $t\in\{\bar g=0\}$,
			\[\bar f_u(t)\,v(t)=0\qquad\text{and}\qquad \langle\bar g_u(t),\, v(t)\rangle\ge 1.\]
		\end{itemize}
		Then there exists a tuple of nondegenerate multipliers $(\lambda,\beta,\lambda_0,z,\zeta)$ satisfying the conditions of Theorem \ref{pmp}  with $\zeta=0$, $\beta=0$, and whose cost multiplier is strictly positive; in fact,
		\[\lambda_0\ \ge\ \frac{1}{1+\|\bar L_u\|_{L^\infty}\,\|v\|_{L^1}}\ >\ 0.\]
		In particular, the costate $\lambda$ is absolutely continuous and after normalization one may take $\lambda_0=1$.
	\end{theorem}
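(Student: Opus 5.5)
The plan is to approximate (P) by truncated problems (P$_\delta$) posed on $[\delta,1]$, with initial condition $x(\delta)=\bar x(\delta)$, exactly as (E$_n$) approximates (E). The restriction of $(\bar x,\bar u)$ to $[\delta,1]$ is still a weak local minimizer of (P$_\delta$): splice any competitor on $[\delta,1]$ with $(\bar x,\bar u)$ on $[0,\delta]$. By \textbf{(R)} it is regular there, so Theorem \ref{pmp}, in the form of Remark \ref{rem:interval}, together with Lemma \ref{lem:chargesupport} applied with $I=[\delta,1]$, yields a tuple $(\lambda^\delta,\beta^\delta,\lambda_0^\delta,z_\delta,\zeta_\delta)$ with $\zeta_\delta=0$. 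Hence $\beta^\delta\equiv 0$ and $\lambda^\delta$ is absolutely continuous. I normalize by $\lambda_0^\delta+\|z_\delta\|_{L^1}=1$.

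The key step is a uniform lower bound for $\lambda_0^\delta$ obtained from (C1). Take the first stationarity condition on $[\delta,1]$,
\[0=\bar f_u^T\lambda^\delta+\lambda_0^\delta\bar L_u^T+\bar g_u^Tz_\delta ,\]
and pair it with $v(t)$. Since $z_\delta=0$ a.e. off $\{\bar g=0\}$, the set $\{\bar g=0\}$ carries all of $z_\delta$. On that set $\bar f_uv=0$ kills the costate term and $\langle\bar g_u,v\rangle\ge1$. Off that set the costate term is not killed directly, so I would first replace $v$ by $v\chi_{\{\bar g=0\}}$, which still satisfies (C1) and has no larger $L^1$ norm. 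After this replacement the costate term vanishes a.e. on $[\delta,1]$. Integrating gives
\[\|z_\delta\|_{L^1}\le\int z_\delta\langle\bar g_u,v\rangle\,dt=-\lambda_0^\delta\int\bar L_uv\,dt\le\lambda_0^\delta\|\bar L_u\|_{L^\infty}\|v\|_{L^1}.\]
Combining this with the normalization produces $\lambda_0^\delta\ge(1+\|\bar L_u\|_{L^\infty}\|v\|_{L^1})^{-1}$. This is the precise bound of the statement, and it holds uniformly in $\delta$.

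The second step is the passage to the limit $\delta=1/n\to0$, which I expect to be the main obstacle. Remark \ref{rem:method} warns that mass may escape to $t=0$, and a uniform $L^1$ bound does not prevent this. The same pairing, however, gives domination: $z_\delta\le z_\delta\langle\bar g_u,v\rangle=-\lambda_0^\delta\bar L_uv\le\|\bar L_u\|_{L^\infty}|v|$ a.e. on $\{\bar g=0\}$, and $z_\delta=0$ elsewhere. So the densities have the fixed integrable majorant $\|\bar L_u\|_{L^\infty}|v|$.

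I then take Banach limits. Set $\lambda_0=\blim\lambda_0^{n}$ and $\omega(E)=\blim\int_E z_{n}\,dt$, with $z_n$ extended by zero. Domination makes $\omega$ countably additive and absolutely continuous, because $\omega(E)\le\|\bar L_u\|_{L^\infty}\int_E|v|$. By Radon–Nikodym $\omega=z\,d\leb$ with $z\ge0$ and $z\le\|\bar L_u\|_{L^\infty}|v|$, and the Yosida–Hewitt pure part vanishes, so $\zeta=0$.

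For the costate I take $\lambda$ to be the solution of the linear ODE $-\dot\lambda=\bar f_x^T\lambda+\lambda_0\bar L_x^T+z\bar g_x^T$ with $\lambda(1)=0$. Variation of constants writes $\lambda^n(t)$ as an integral of $z_n$ and $\lambda_0^n$ against bounded kernels, so taking Banach limits shows $\lambda(t)=\blim\lambda^n(t)$ pointwise. The functionals involved are integrals of $z_n$ against $L^\infty$ functions, and $\omega$ extends to these by uniform approximation with simple functions.

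The same linearity passes stationarity to the limit. Test the first condition against $\chi_E w$ for $E\subseteq[\delta,1]$ and apply $\blim$; the terms with $n<1/\delta$ are irrelevant by shift invariance. This gives stationarity a.e. on every $[\delta,1]$, hence on $[0,1]$. Complementary slackness passes similarly, since $\int z_n|\bar g|=0$. Nontriviality and nondegeneracy follow from $\lambda_0\ge(1+\|\bar L_u\|\|v\|)^{-1}>0$, and rescaling makes $\lambda_0=1$.
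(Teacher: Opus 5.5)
Your proposal is correct. Its first half coincides with Steps 1--2 of the paper's proof: truncation to $[\delta,1]$, the normalization $\lambda_0^\delta+\|z_\delta\|_{L^1}=1$, and pairing the stationarity condition with $v$ on the active set to obtain both the lower bound on $\lambda_0^\delta$ and the majorant $\|\bar L_u\|_{L^\infty}|v|$. Your appeal to Lemma~\ref{lem:chargesupport} with $I=[\delta,1]$ is a more self-contained justification of $\zeta_\delta=0$ than the paper's citation of the regular theory.

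The genuine difference is the passage to the limit.

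The paper extracts a subsequence. It proves a Gr\"onwall bound on $\|\lambda_i\|_{L^\infty}$ and a uniform bound on the variation. It then uses Bolzano--Weierstrass for $\lambda_{0i}$, Dunford--Pettis for $z_i$ (via the majorant) and Helly's theorem for $\lambda_i$. Finally it passes to the limit in the integrated costate equation and in the tested stationarity condition by ordinary convergence arguments.

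You instead take Banach limits of the whole sequence. You exploit the fact that, once $\zeta_n=0$, every relevant quantity is a linear functional of $(\lambda_0^n,z_n)$ with a bounded kernel. This buys three things:
\begin{itemize}
\item domination makes $\omega=\blim z_n\,dt$ countably additive directly, so weak $L^1$ compactness is not needed; this is exactly the shortcut announced in Remark~\ref{rem:method};
\item Radon--Nikodym then produces $z$ with the same majorant;
\item the costate is rebuilt from the ODE rather than obtained as a limit, so no bounds on $\lambda^n$ or on its variation are required.
\end{itemize}

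The one step that needs care is ``the same linearity passes stationarity to the limit.'' Banach limits do not commute with Lebesgue integration of pointwise Banach limits in general, so the identity $\lambda(t)=\blim\lambda^n(t)$ alone does not give $\blim\int_E(\lambda^n)^T\bar f_u w\,dt=\int_E\lambda^T\bar f_u w\,dt$. You must first use variation of constants and Fubini to write, for $E\subseteq[\delta,1]$ and $n>1/\delta$,
\[\int_E(\lambda^n)^T\bar f_u w\,dt=\lambda_0^n c_E+\int_0^1 z_n(s)K_E(s)\,ds,\qquad K_E\in\Linf,\]
and only then apply $\blim$ through Lemma~\ref{lem:blimcommute}. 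This is presumably what you intend, and with it made explicit the argument is complete.

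In short, your route is more economical and avoids subsequence extraction and all costate estimates. The paper's route yields genuine subsequential convergence of the multipliers using only standard compactness theorems.
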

	
	\begin{remark}\label{rem:c1active}
		In Theorem \ref{t1} it suffices to impose \textbf{(C1)} on the active set $\{\bar g=0\}$, rather than on the larger near-active set $A_\varepsilon$ of Definition \ref{reg}: by the complementary slackness condition, the integrable multipliers of the auxiliary problems appearing in the proof are supported on $\{\bar g=0\}$, so the pointwise bound \eqref{star} is unaffected, while the near-active regularity is supplied separately by hypothesis \textbf{(R)}, which remains in force. The analogous weakening is \emph{not} available in Theorem \ref{t2} below: there the lower bound of \textbf{(C2)} on all of $A_\varepsilon$ is what renders each auxiliary problem regular. Were the bound required only on $\{\bar g=0\}$, the gradient $\bar g_u$ could degenerate on $A_\varepsilon\setminus\{\bar g=0\}$, the auxiliary problems could fail to be regular, and their multipliers could carry pure charges which the domination argument does not remove.
	\end{remark}
	
	\begin{remark}
		In \cite[Example 4.3]{be2021} (transcribed as Example \ref{ex:be43} above) it is shown that, if the constraint and its gradient with respect to \( u \) vanish simultaneously only at the final point \( t = 1 \) instead of the initial point \( t = 0 \), as assumed earlier, then the nullification of the charge is generally not possible, even if the constraint is not activated at any other point within the interval. As Remark \ref{rem:be43} shows, normality is nevertheless retained there.
	\end{remark}
	
	Next we relax the hypotheses of Theorem \ref{t1}, expecting, of course, a weaker conclusion: nondegenerate, rather than normal, multipliers. We do so by replacing \textbf{(C1)} with an integrable lower bound on $|\bar g_u|$ near the nonregular instant; in particular, we drop the requirement $\bar f_u v=0$ from \textbf{(C1)}. The price we pay is precisely the loss of the uniform lower bound on the cost multiplier. Nonetheless, the annihilation of the charge is preserved.
	
	\begin{theorem}\label{t2}
		Let $(\bar x, \bar u)$ be a weak local minimizer for (P) such that $\bar g(0)=0$ and $\bar g_u(0)=0$. If the following condition holds:
		\begin{itemize}
			\item[\textbf{(C2)}] there exist $\varepsilon>0$ and a non-decreasing function $\omega:(0,1]\to(0,\infty)$ with
			\[\int_0^1\frac{dt}{\omega(t)}<\infty\]
			such that, for a.e. $t\in \{\bar g\geq-\varepsilon\}$
			\[|\bar g_u(t)|\geq\omega(t),\]
		\end{itemize}
		then there exists a tuple of multipliers $(\lambda,\beta,\lambda_0,z,\zeta)$ satisfying the conditions of Theorem \ref{pmp} with $\zeta=0$ and $\beta\equiv 0$; in particular, the multipliers are nondegenerate.
	\end{theorem}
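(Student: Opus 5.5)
The plan is to approximate (P) by truncated problems on $[\delta,1]$, which are regular, and then pass to the limit $\delta\to 0$. The domination supplied by \textbf{(C2)} will keep the integrable multipliers from concentrating at the origin.

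For $\delta_k\downarrow 0$, consider the auxiliary problem (P$_k$) on $[\delta_k,1]$ with the same data and the fixed initial condition $x(\delta_k)=\bar x(\delta_k)$. The restriction of $(\bar x,\bar u)$ remains a weak local minimizer of (P$_k$): concatenating any competitor with $(\bar x,\bar u)|_{[0,\delta_k]}$ gives a competitor for (P) that is close in the required norms.

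\emph{Regularity of the truncations.} By \textbf{(C2)}, on $A_\varepsilon\cap[\delta_k,1]$ we have $|\bar g_u(t)|\ge\omega(t)\ge\omega(\delta_k)>0$, since $\omega$ is non-decreasing. Hence $v_0=\bar g_u^T/|\bar g_u|^2$, set to zero off $A_\varepsilon$, is bounded there and gives $\bar g_uv_0=1$. So (P$_k$) is regular. Theorem \ref{pmp} (Remark \ref{rem:interval}) therefore yields multipliers, and by Lemma \ref{lem:chargesupport} applied on the whole of $[\delta_k,1]$ we get $\zeta_k=0$, $\beta_k=0$, and $\lambda_k$ absolutely continuous. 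Normalize so that $\lambda_0^k+\|z_k\|_{L^1}=1$ and extend $z_k$ by zero to $[0,1]$.

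\emph{Uniform domination.} This is the key step. The costate equation $-\dot\lambda_k=\bar f_x^T\lambda_k+\lambda_0^k\bar L_x^T+z_k\bar g_x^T$ with $\lambda_k(1)=0$ and Gronwall give $\|\lambda_k\|_{L^\infty}\le C$, uniformly in $k$. By complementary slackness $z_k$ vanishes off $\{\bar g=0\}\subset A_\varepsilon$. Stationarity gives $z_k|\bar g_u|=|\bar f_u^T\lambda_k+\lambda_0^k\bar L_u^T|\le C'$, so $0\le z_k(t)\le C'/\omega(t)$ a.e. This is a fixed $L^1$ majorant, precisely what fails in (E).

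\emph{Passage to the limit.} By Dunford--Pettis, a subsequence satisfies $z_k\rightharpoonup z$ weakly in $L^1$, with $z\ge0$. (Equivalently, one could take a Banach limit of $\int\cdot\,z_k$ and note that domination kills the pure part of the Yosida--Hewitt decomposition \eqref{YH}.) We also take $\lambda_0^k\to\lambda_0$. Nontriviality survives: $\lambda_0+\|z\|_{L^1}=\lim(\lambda_0^k+\int_0^1z_k)=1$, because testing against $\mathbf 1\in L^\infty$ commutes with weak convergence. The costates $\lambda_k$, solutions of a linear ODE with weakly converging forcing, converge uniformly, by equicontinuity and Arzel\`a--Ascoli from the $L^1$-equiintegrable right-hand sides, to the solution $\lambda$ of the limit costate equation with $\beta=0$. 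Stationarity passes to the limit after testing against $L^\infty$ functions, and so does $z\bar g=0$ a.e.; the conditions involving $\zeta$ hold trivially with $\zeta=0$. Since $\zeta=0$ and $(\lambda_0,z)\neq0$, the tuple is not degenerate.

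I expect the main obstacle to be the uniform bound on $\lambda_k$ coupled with the domination: the Gronwall estimate needs $\|z_k\|_{L^1}\le1$, which the normalization provides, so this should close. The remaining work is checking that the limit of the stationarity products $\bar g_u^Tz_k$ is handled by weak $L^1$ convergence, since $\bar g_u\in L^\infty$.
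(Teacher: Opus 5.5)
Your proposal is correct and follows essentially the same route as the paper: truncation to $[\delta_k,1]$, regularity of each truncation via $v_0=\bar g_u^T/|\bar g_u|^2$ and the monotonicity of $\omega$, the normalization $\lambda_0^k+\|z_k\|_{L^1}=1$, the Gr\"onwall bound, the fixed majorant $K/\omega$ obtained from stationarity and complementary slackness, and Dunford--Pettis followed by testing against $\mathbf 1$ for nontriviality. The differences are minor technical choices. You get $\zeta_k=0$ from Lemma \ref{lem:chargesupport} on $[\delta_k,1]$, which in fact shows every tuple of the truncated problem is charge-free, whereas the paper invokes the known multiplier rule for regular processes. You pass to the limit in the costate by Arzel\`a--Ascoli using the integrable majorant, whereas the paper uses Helly's theorem with a variation bound. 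When testing stationarity, note that the extended multipliers do not satisfy it on $[0,\delta_k)$: this leaves an $O(\delta_k)$ remainder, or you can test only against $\phi$ supported in $[\delta,1]$.
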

	
	\begin{remark}
		Note that, although we are not explicitly assuming \textbf{(R)}, this condition is implied by \textbf{(C2)} (see the details in Section~\ref{subsec:prooft2}). Moreover, this condition excludes, for instance, lower bounds of the form $|\bar g_u(t)|\geq c\,t$ (which would force $\omega(t)\leq ct$ and hence $\int_0^1\omega^{-1}=\infty$), precisely the regime in which the degeneracy phenomenon persists; see for example Subsection \ref{subsec:example}. It does, however, admit Hölder-type irregularities $|\bar g_u(t)|\geq c\,t^{\alpha}$ with $0\le\alpha<1$.
	\end{remark}
	
	\begin{remark}
		The essential requirement in Theorem \ref{t2} is the nondecreasing integrable lower bound $|\bar g_u(t)|\geq\omega(t)$ near $t=0$ with $1/\omega\in L^1$: it is this integrability that confines the mass of the approximating multipliers $z_i$, preventing its concentration at the nonregular instant. If $1/|\bar g_u|$ fails to be integrable near that instant, like in the example of Subsection \ref{subsec:example}, the mass of $z_i$ may concentrate at $t=0$ and only the degenerate charge of \eqref{trivial} survives, even when $\bar g_u(t)\neq 0$ for every $t\in(0,1]$.
	\end{remark}
	
	We can regain the existence of a normal tuple of multipliers by a different route: imposing the existence of a solution of the equation of variation associated with the dynamics of the problem, pointing into the admissible region on the set where the constraint is active.
	
	\begin{theorem}\label{t3}
		Assume \textbf{(C2)} and let $(\bar x,\bar u)$ be a weak local minimizer for (P) with $\bar g(0)=0$ and $\bar g_u(0)=0$. Suppose the following condition holds:
		\begin{itemize}
			\item[\textbf{(C3)}] there exists $(y,v)\in\AC{n}\times\Linf[m]$ solving the linear system
			\[\dot y(t)=\bar f_x(t)y(t)+\bar f_u(t)v(t),\qquad y(0)=0,\]
			and satisfying
			\[\bar g_x(t)y(t)+\bar g_u(t)v(t)<0\qquad\text{for a.e. }t\in\{\bar g=0\}.\]
		\end{itemize}
		Then there exists a tuple of multipliers $(\lambda,\beta,\lambda_0,z,\zeta)$ satisfying the conditions of Theorem \ref{pmp} with $\zeta=0$, $\beta\equiv 0$ and $\lambda_0>0$; in particular, after normalization one may take $\lambda_0=1$.
	\end{theorem}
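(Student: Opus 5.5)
The plan is to start from Theorem \ref{t2}, which under \textbf{(C2)} already supplies a tuple $(\lambda,\beta,\lambda_0,z,\zeta)$ satisfying the conditions of Theorem \ref{pmp} with $\zeta=0$ and $\beta\equiv 0$. The remaining task is to show that \textbf{(C3)} forces $\lambda_0>0$ for this tuple. In fact the argument should give more: it should apply to every tuple with $\zeta=0$. I will argue by contradiction, assuming $\lambda_0=0$. Then $\lambda$ is absolutely continuous and satisfies
\[
-\dot\lambda=\bar f_x^T\lambda+z\,\bar g_x^T,\qquad \lambda(1)=0,
\]
together with $0=\bar f_u^T\lambda+\bar g_u^Tz$ a.e. and $z\bar g=0$ a.e.

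The key step is a duality pairing with the inward direction $(y,v)$ from \textbf{(C3)}. I will differentiate $\langle\lambda,y\rangle$ and integrate over $[0,1]$, using $y(0)=0$ and $\lambda(1)=0$:
\[
0=\int_0^1\frac{d}{dt}\langle\lambda,y\rangle\,dt=\int_0^1\bigl(-\lambda^T\bar f_xy-z\bar g_xy+\lambda^T\bar f_xy+\lambda^T\bar f_uv\bigr)dt=-\int_0^1 z\bigl(\bar g_xy+\bar g_uv\bigr)dt.
\]
The last equality uses stationarity in the form $\lambda^T\bar f_uv=-z\bar g_uv$. Integrability of each term is clear, since $z\in L^1$ and $y,v,\lambda$ are bounded, so the product rule for absolutely continuous functions applies. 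By complementary slackness, $z$ vanishes a.e. off $\{\bar g=0\}$. On that set, \textbf{(C3)} gives $\bar g_xy+\bar g_uv<0$ a.e. Since $z\geq 0$, the integrand $z(\bar g_xy+\bar g_uv)$ is $\leq 0$ a.e. and integrates to zero, so $z=0$ a.e.

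With $\lambda_0=0$ and $z=0$, the costate solves a homogeneous linear ODE with $\lambda(1)=0$, so $\lambda\equiv 0$. Nontriviality (a) then requires $\lambda_0+\|z\|_{L^1}+\zeta([0,1])>0$, but all three terms vanish because $\zeta=0$. This contradiction gives $\lambda_0>0$, and dividing the tuple by $\lambda_0$ normalizes it to $\lambda_0=1$.

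The main obstacle I expect is making sure the pairing is legitimate. It relies on $\zeta=0$: otherwise $\beta$ would contribute a term $\int y\,d\beta$ and a charge term $\int(\bar g_xy+\bar g_uv)\,d\zeta$ whose sign cannot be controlled. In particular, the strict inequality of \textbf{(C3)} gives no information on $\zeta$-null or Lebesgue-null sets, which is where a pure charge concentrated at $0$ would live. This is exactly why the argument needs Theorem \ref{t2}'s tuple with $\zeta=0$, rather than an arbitrary tuple.
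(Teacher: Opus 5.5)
Your argument is correct and is essentially the paper's proof. You take the charge-free tuple of Theorem \ref{t2}, pair $\lambda$ with the \textbf{(C3)} direction to get $\int_0^1 z\,(\bar g_xy+\bar g_uv)\,dt=0$ when $\lambda_0=0$, and conclude $z=0$ a.e., which contradicts nontriviality. The paper instead invokes the normalization $\lambda_0+\|z\|_{L^1}=1$ inherited from the construction, which is equivalent here, and your step $\lambda\equiv 0$ is not needed.

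One correction to your closing remark: a pure charge absolutely continuous with respect to Lebesgue measure vanishes on Lebesgue-null sets, so it cannot live there. The real obstruction when $\zeta\neq 0$ is different. The function $w=\bar g_xy+\bar g_uv$ tends essentially to $0$ at the origin, because $y(0)=0$ and $\bar g_u(0)=0$. Hence $\int w\,d\zeta=0$, with no strict sign, for a charge right-concentrated there.
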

	
	\subsection{Nondegeneracy and normality without regular approximations}
	
	Condition \textbf{(C3)} above, which together with \textbf{(C2)} yields normality under the approximability hypothesis \textbf{(R)}, can be strengthened so that \textbf{(C2)}, \textbf{(R)}, and approximations altogether are no longer needed, yielding the stronger conclusion that \emph{every} tuple of multipliers is normal. This strengthening, however, is not applicable when $t^*=0$ is a nonregular instant, so the result below should be viewed as complementary to Theorems \ref{t1}, \ref{t2}, and \ref{t3} rather than as subsuming them. For an in-depth discussion of the scope of Theorem \ref{t4} and how it compares to the earlier results, see Remarks \ref{rem:t4scope1} and \ref{rem:t4scope2}.
	
	\begin{theorem}\label{t4}
		Let $(\bar x,\bar u)$ be a weak local minimizer for (P) and suppose the following condition holds:
		\begin{itemize}
			\item[\textbf{(C4)}] there exist $\varepsilon>0$, $\delta>0$ and $(y,v)\in \AC{n}\times \Linf[m]$ solving the linear system
			\[\dot{y}(t)=\bar{f}_x(t)y(t)+\bar{f}_u(t)v(t),\quad y(0)=0,\]
			and satisfying
			\[\bar{g}_x(t)y(t)+\bar{g}_u(t)v(t)\leq-\delta\quad\mbox{for a.e. }t\in \{\bar{g}\geq-\varepsilon\}.\]
		\end{itemize}
		Then \emph{every} tuple of multipliers $(\lambda,\beta,\lambda_0,z,\zeta)$ satisfying the conditions of Theorem \ref{pmp} fulfills $\lambda_0>0$ together with the bound
		\begin{equation}\label{massbound}
			\|z\|_{L^1}+\zeta([0,1])\;\leq\;\frac{\lambda_0}{\delta}\int_0^1\bigl|\bar L_x(t)y(t)+\bar L_u(t)v(t)\bigr|\,dt .
		\end{equation}
		In particular, the set of all multipliers satisfying $\lambda_0=1$ is bounded.
	\end{theorem}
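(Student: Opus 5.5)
The plan is to test the adjoint relations of Theorem \ref{pmp} against the inward-pointing variation $(y,v)$ of \textbf{(C4)}. This yields a single scalar identity of the form
\[
\lambda_0\int_0^1(\bar L_xy+\bar L_uv)\,dt+\int_0^1(\bar g_xy+\bar g_uv)\,z\,dt+\int_0^1(\bar g_xy+\bar g_uv)\,d\zeta=0,
\]
and the sign information of \textbf{(C4)} then does the rest.

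To derive the identity, we compute $d\langle\lambda,y\rangle$ on $[0,1]$. We use the costate equation of Theorem \ref{pmp}(c), the linearized dynamics, $y(0)=0$ and $\lambda(1)=0$. This gives
\[
0=\langle\lambda(1),y(1)\rangle-\langle\lambda(0),y(0)\rangle=\int_0^1\langle\lambda,\bar f_uv\rangle\,dt-\int_0^1\bigl(\lambda_0\bar L_xy+z\bar g_xy\bigr)dt-\int_0^1 y\,d\beta .
\]
The first stationarity condition replaces $\bar f_u^T\lambda$ by $-\lambda_0\bar L_u^T-\bar g_u^Tz$. For the charge term, we need the identity $\int_0^1 y\,d\beta=\int_0^1\bar g_x y\,d\zeta$ for continuous $y$, which is the charge form of the formula for $\beta$. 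This is the step I expect to be the main obstacle. It requires an integration-by-parts formula between the BV function $\beta$, defined through limits of $\int_{t+1/k}^1\bar g_x\,d\zeta$, and the pure charge $\zeta$. I would prove it by approximating $y$ uniformly by step functions on partitions of $[0,1]$, using the representation of $\beta$ a.e. as $-\int_{(t,1]}\bar g_x^T\,d\zeta$ (Lemma \ref{lem:betacharge}) and the finite additivity of $\zeta$, and then passing to the limit using the uniform continuity of $y$ and $\|\zeta\|_{TV}<\infty$. The point at $t=0$ needs care, since $\beta(0)$ is defined by the full integral; here the convention $y(0)=0$ absorbs any jump at the origin. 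Finally, $\int\bar g_uv\,d\zeta$ vanishes by the charge form of the second stationarity condition used in Lemma \ref{lem:chargesupport}. So the charge term reduces to $\int(\bar g_xy+\bar g_uv)\,d\zeta$, as written above.

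With the identity in hand, we bound the two multiplier terms. The function $\bar g_xy+\bar g_uv$ is essentially bounded and is $\le-\delta$ a.e. on $A_\varepsilon=\{\bar g\ge-\varepsilon\}$. Complementary slackness gives $z=0$ a.e. off $\{\bar g=0\}\subseteq A_\varepsilon$, so $\int(\bar g_xy+\bar g_uv)z\,dt\le-\delta\|z\|_{L^1}$. As in the proof of Lemma \ref{lem:chargesupport}, the set $[0,1]\setminus A_\varepsilon$ is $\zeta$-null, because $\zeta$ vanishes on $\{|\bar g|>\varepsilon/2\}$. Positivity and monotonicity of the charge integral then give $\int(\bar g_xy+\bar g_uv)\,d\zeta\le-\delta\,\zeta([0,1])$. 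Substituting these bounds into the identity yields
\[
\delta\bigl(\|z\|_{L^1}+\zeta([0,1])\bigr)\le\lambda_0\int_0^1(\bar L_xy+\bar L_uv)\,dt\le\lambda_0\int_0^1|\bar L_xy+\bar L_uv|\,dt,
\]
which is \eqref{massbound}.

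Normality follows at once. If $\lambda_0=0$, then \eqref{massbound} forces $z=0$ and $\zeta=0$, contradicting the nontriviality condition (a); hence $\lambda_0>0$. When $\lambda_0=1$, \eqref{massbound} bounds $z$ and $\zeta$ uniformly. The costate equation then bounds $\beta$ through $\|\bar g_x\|_{L^\infty}\zeta([0,1])$, and bounds $\lambda$ through Gronwall's inequality on the linear equation with terminal value $0$. Together these give boundedness of the set of normalized multipliers.
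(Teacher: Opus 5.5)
Your proposal is correct and follows essentially the paper's argument. The paper obtains the same identity \eqref{fundidch} by differentiating $p^Ty$ with $p=\lambda+\beta$ absolutely continuous, and then converts $\int_0^1\beta^T\dot y\,dt$ using Lemma~\ref{lem:betacharge}, whose proof is exactly your partition argument for $\int_{(0,1]}y^T\,d\beta=\int\bar g_xy\,d\zeta$; you instead apply the Stieltjes product rule to $\langle\lambda,y\rangle$ directly, and the localization on $A_\varepsilon$ and the normality step then match the paper. Your Gr\"onwall bound on $\lambda$ and the bound on $\beta$ through $\|\bar g_x\|_{L^\infty}\zeta([0,1])$ usefully fill in the final ``in particular'' claim, which the paper leaves implicit.
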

	
	\begin{remark}\label{rem:t4scope1}
		The standing assumptions $\bar g(0)=0$ and $\bar g_u(0)=0$ of the previous results were deliberately dropped from Theorem \ref{t4}: under them, condition \textbf{(C4)} is unsatisfiable. Indeed, $\bar g(0)=0$ forces $A_\varepsilon$ to contain almost all of a neighborhood of the origin, while $y(0)=0$, the essential boundedness of $\bar g_x$ and the essential vanishing of $\bar g_u$ at $t=0$ give $\bar g_x y+\bar g_u v\to 0$ essentially as $t\to 0^+$ for every pair $(y,v)$ admissible in \textbf{(C4)}, so no uniform margin can hold near the origin. This is exactly as it must be: in that scenario Proposition \ref{prop:trivial}(i) exhibits a degenerate tuple satisfying all conditions of Theorem \ref{pmp}, so no condition on $(\bar x,\bar u)$ can yield the strong conclusion of Theorem \ref{t4}. The same obstruction arises at a nonregular instant $t^*\in(0,1]$ with $\bar g_x(t^*)=0$, in accordance with Proposition \ref{prop:trivial}(ii).
	\end{remark}
	
	\begin{remark}\label{rem:t4scope2}
		When the nonregularities are located at instants $t^*\in(0,1]$ such that $\bar g_x(t^*)\neq 0$, then the strict margin in \textbf{(C4)} becomes attainable whenever the linearized system is suitably controllable: one may steer $y$ into the half-space $\bar g_x(t^*)y<0$, choosing for instance $v$ to vanish on the near-active set, so that there $\bar g_xy+\bar g_uv=\bar g_xy$. In particular this covers nonregularity at the terminal instant $t=1$ where, by \cite[Example 4.3]{be2021} (see Example \ref{ex:be43}), the pure charge cannot in general be nullified: Theorem \ref{t4} does not nullify it either, but ensures that every tuple of multipliers is normal and that the total mass of the charge obeys \eqref{massbound}. Theorem \ref{t4} thus complements, rather than supersedes, Theorems \ref{t1}--\ref{t3}: those results treat the nonregular initial instant by approximation with regular problems, while Theorem \ref{t4} treats nonregular instants elsewhere and dispenses with approximation altogether.
	\end{remark}

	\subsection{Normality with nonregularity at both endpoints}\label{subsec:mixed}
	
	Theorems \ref{t1}--\ref{t3} treat a nonregular \emph{initial} instant, whereas Theorem \ref{t4} treats nonregular instants elsewhere and dispenses with approximation, at the cost of the standing assumptions $\bar g(0)=0$, $\bar g_u(0)=0$ which, by Remark \ref{rem:t4scope1}, render the margin condition in \textbf{(C4)} unsatisfiable at the origin. Neither framework covers, on its own, a process that is nonregular \emph{simultaneously} at the initial instant of the type governed by Theorems \ref{t1}--\ref{t3}, and at the terminal instant beyond the reach of those results. The present subsection provides the missing bridge. Before stating the result, let us formulate the assumptions required: 
	
	\begin{itemize}
		\item[\textbf{(R-mid)}] For every $0<a<b<1$ the restriction of $(\bar x,\bar u)$ to $[a,b]$ is regular in the sense of Definition \ref{reg}.
		\item[\textbf{(C2-loc)}] there exist $\varepsilon_0\in(0,1)$, $\varepsilon>0$ and a non-decreasing $\omega:(0,\varepsilon_0]\to(0,\infty)$ with 
		\[
		\int_0^{\varepsilon_0}\omega(t)^{-1}\,dt<\infty\qquad\text{and}\qquad |\bar g_u(t)|\ge\omega(t) \text{ a.e. on } A_\varepsilon\cap[0,\varepsilon_0]
		\]
		\item[\textbf{(C4$\setminus$0)}] there exist $\varepsilon>0$ and $(y,v)\in\AC{n}\times\Linf[m]$ solving the linear system
		\[\dot y(t)=\bar f_x(t)y(t)+\bar f_u(t)v(t),\qquad y(0)=0.\]
		such that, with $w:=\bar g_xy+\bar g_uv$, one has $w\le 0$ a.e. on $A_\varepsilon$, and
		\[\text{for every }\eta\in(0,1)\text{ there is }\delta_\eta>0\text{ with }w(t)\le-\delta_\eta\ \text{ a.e. on }A_\varepsilon\cap[\eta,1].\]
	\end{itemize}
	
	\begin{remark}
		Notice that these conditions are modified versions to the ones used previously: \textbf{(R-mid)} is the regularity of each interval not containing the endpoints; \textbf{(C2-loc)} is just \textbf{(C2)} localized around the origin; and \textbf{(C4$\setminus$0)} is \textbf{(C4)} with the uniform margin allowed to die at the origin (where Remark \ref{rem:t4scope1} shows no such margin can survive) and imposing a weaker nonstrict inequality $w\le 0$ on the whole near-active set, but strengthening it to a strict $\eta$-dependent margin away from the origin. As $\eta\to0^+$, the margin $\delta_\eta$ in \textbf{(C4$\setminus$0)} degenerates to zero, on account of the equalities $\bar g(0)=\bar g_u(0)=0$.
	\end{remark}
	
	We can now state the result.
	
	\begin{theorem}\label{t5}
		Let $(\bar x,\bar u)$ be a weak local minimizer for (P) with $\bar g(0)=0$ and $\bar g_u(0)=0$, and assume \textbf{(R-mid)},  \textbf{(C2-loc)}, and \textbf{(C4$\setminus$0)}. Then there exists a tuple of multipliers $(\lambda,\beta,\lambda_0,z,\zeta)$ satisfying the conditions of Theorem \ref{pmp} with $\lambda_0>0$ whose pure charge either vanishes or is left-concentrated at $t=1$. Moreover, for every $\eta\in(0,1)$,
		\begin{equation}\label{massboundmixed}
			\zeta([0,1])+\int_{[\eta,1]}z(t)\,dt\;\le\;\frac{\lambda_0}{\delta_\eta}\int_0^1\bigl|\bar L_x(t)y(t)+\bar L_u(t)v(t)\bigr|\,dt.
		\end{equation}
	\end{theorem}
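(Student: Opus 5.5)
We will proceed by approximation with truncated problems, as in the proofs of Theorems \ref{t2} and \ref{t3}, and combine the domination argument near the origin with the margin argument of Theorem \ref{t4} away from it. For $i$ large, consider the auxiliary problem (P$_i$) posed on $[1/i,1]$ with initial condition $x(1/i)=\bar x(1/i)$. The restriction of $(\bar x,\bar u)$ to $[1/i,1]$ is a weak local minimizer of (P$_i$). It is regular on every $[1/i,b]$ with $b<1$: on $[1/i,\varepsilon_0]$ this follows from \textbf{(C2-loc)}, since $\omega\ge\omega(1/i)>0$ lets us take $v_0=\bar g_u^T/|\bar g_u|^2\cdot\omega(1/i)^{-1}$ suitably scaled, and on $[\varepsilon_0/2,b]$ it follows from \textbf{(R-mid)}. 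Theorem \ref{pmp} (Remark \ref{rem:interval}) then yields tuples $(\lambda^i,\beta^i,\lambda_0^i,z_i,\zeta_i)$. By Lemma \ref{lem:chargesupport}, each $\zeta_i$ vanishes on every $[1/i,b]$, and is therefore left-concentrated at $1$.

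The key step is a uniform mass estimate obtained by testing the multipliers of (P$_i$) against the restriction of the pair $(y,v)$ from \textbf{(C4$\setminus$0)}. Pairing the costate equation and the stationarity conditions with $(y,v)$, integrating by parts (using $\lambda^i(1)=0$ and the jump formula for $\beta^i$, exactly as in the proof of Theorem \ref{t4}), we expect the identity
\[\int_{1/i}^1 z_i w\,dt+\int w\,d\zeta_i=-\lambda_0^i\int_{1/i}^1(\bar L_xy+\bar L_uv)\,dt-\lambda^i(1/i)^Ty(1/i).\]
Since $w\le0$ on $A_\varepsilon$, which carries $z_i$ and $\zeta_i$ by complementary slackness, and $w\le-\delta_\eta$ on $A_\varepsilon\cap[\eta,1]$, the left side dominates $\delta_\eta\bigl(\int_{[\eta,1]}z_i+\zeta_i([0,1])\bigr)$. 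The boundary term $\lambda^i(1/i)^Ty(1/i)$ tends to zero because $y(0)=0$, provided $\lambda^i$ remains bounded. Controlling it is the main obstacle. We would normalize by $\lambda_0^i+\|z_i\|_{L^1}+\zeta_i([0,1])=1$. Near the origin we would use \textbf{(C2-loc)} and stationarity: this gives $z_i\le C|h_i|/\omega$ with $h_i$ bounded by $\|\lambda^i\|_\infty$ and $\lambda_0^i$, while $\lambda^i$ is bounded through Gronwall in terms of the total normalized mass. This shows that $\lambda^i$ stays bounded and that $z_i$ admits the fixed integrable majorant $C/\omega$ on $[0,\varepsilon_0]$, so no mass escapes to the origin.

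With these bounds in hand we pass to the limit along a Banach limit, as in Proposition \ref{prop:limits}(iv). On $[0,\varepsilon_0]$ the domination forces the limit of $z_i\,dt$ to be countably additive, with no pure part there. On $[\eta,1]$ the estimate above holds uniformly, and the Yosida--Hewitt decomposition \eqref{YH} yields $z$ and a pure $\zeta$. Since each $\zeta_i$ is null on $[0,b]$, $\zeta$ vanishes or is left-concentrated at $1$. The conditions of Theorem \ref{pmp} pass to the limit as in Theorems \ref{t2}--\ref{t3}.

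Next we prove $\lambda_0>0$. If $\lambda_0=0$, the limiting identity gives $\int z\,dt\cdot(\ldots)$ vanishing on each $[\eta,1]$ together with $\zeta=0$. The domination then gives $\int_0^\eta z\to0$, so all of the mass would vanish, contradicting the normalization that survives thanks to domination. Finally, \eqref{massboundmixed} follows by rescaling the limiting identity with $\lambda_0>0$, after discarding the nonpositive contributions on $[0,\eta)$.
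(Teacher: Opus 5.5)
Reordering the argument is fine: you test each truncated tuple against $(y,v)$ and pass the inequality to the limit, whereas the paper first builds a limit tuple satisfying Theorem~\ref{pmp} (Lemma~\ref{lem:limit}) and then applies \eqref{fundidch} to it. Two details check out. The boundary term is actually $+\lambda^i(t_i)^Ty(t_i)$, which is harmless because Gr\"onwall with your normalization bounds $\lambda^i$ uniformly and $y(t_i)\to 0$. Your normality argument also survives: domination near $0$, together with $\omega([\eta,1])=0$ for all $\eta$, already forces $\omega=0$.

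The gap is the inference ``since each $\zeta_i$ is null on $[0,b]$, $\zeta$ vanishes or is left-concentrated at $1$''. The limit $\zeta$ is the pure part of $\omega=\blim(z_i\,d\leb+\zeta_i)$, not a limit of the $\zeta_i$. The densities $z_i$ create a pure part wherever their mass concentrates, exactly as in Proposition~\ref{prop:limits}. You exclude this only on $[0,\varepsilon_0]$. On $[\varepsilon_0,1)$ your mass estimate gives just an $L^1$ bound on $z_i$, which still allows concentration at an interior instant.

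This matters twice. First, the location of the charge is part of the statement. Second, the Banach limit of your per-$i$ inequality only gives
$\delta_\eta\bigl(\int_\eta^1 z\,dt+\zeta([\eta,1])\bigr)\le\lambda_0\int_0^1|\bar L_xy+\bar L_uv|\,dt$,
so reaching \eqref{massboundmixed} requires $\zeta([0,\eta))=0$.

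The missing step, done in Step~2 of the paper's proof, is to apply \textbf{(R-mid)} to the densities and not only to the auxiliary charges. Pair stationarity with the bounded direction $v_\delta$ that \textbf{(R-mid)} provides on $[\varepsilon_0,1-\delta]$. This gives $0\le z_i\le K\|v_\delta\|_{L^\infty}$ there, where $K$ is a uniform bound on $|(\lambda^i)^T\bar f_u+\lambda_0^i\bar L_u|$. Hence $(z_i)$ is uniformly integrable on every $[0,1-\delta]$, and $\omega$ restricted to $[0,1-\delta]$ is $z\,d\leb$. Alternatively, apply Lemma~\ref{lem:chargesupport} with \textbf{(R-mid)} to the limit tuple, once you know it satisfies Theorem~\ref{pmp}.

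A second point: the conditions of Theorem~\ref{pmp} do not pass to the limit ``as in Theorems~\ref{t2}--\ref{t3}''. There $\zeta_i=0$ and $z_i$ converges weakly in $L^1$ on all of $[0,1]$. Here the mass escaping to $t=1$ must be handled at the level of charges, in three pieces.
\begin{enumerate}
\item Banach limits commute with integrals of bounded functions (Lemma~\ref{lem:blimcommute}).
\item The costate equation is recovered with $\beta$ built from the limit $\zeta$ (Lemma~\ref{lem:betacharge}).
\item It is not automatic that the escaped mass satisfies $\bar g_u\,d\zeta=0$, since each $z_i$ obeys $z_i\bar g_u=-\bigl((\lambda^i)^T\bar f_u+\lambda_0^i\bar L_u\bigr)$ rather than $z_i\bar g_u=0$.
\end{enumerate}
For the third piece, the limit yields
$\int_E\bar g_u\,d\zeta=-\int_E\bigl(\lambda^T\bar f_u+\lambda_0\bar L_u+z\bar g_u\bigr)\,dt$ for every $E\in\mathfrak L$.
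The left side is a pure charge and the right side is countably additive, so both vanish. This gives both stationarity conditions at once. That argument is the core of Lemma~\ref{lem:limit}, and your sketch needs it explicitly.
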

	
	\begin{remark}\label{rem:t5scope_a}
		Theorem \ref{t5} interpolates between these two families of results. The initial instant is treated as in Theorems \ref{t2}--\ref{t3}, using approximation and the integrable bound \textbf{(C2-loc)}. In contrast, the terminal instant is handled as in Theorem \ref{t4} via the linearized margin \textbf{(C4$\setminus$0)}, which coincides with \textbf{(C4)} away from the origin. A major distinction arises, however, in the proof of this result: the normalized multiplier tuples for the auxiliary problems on $[t_i, 1]$ may carry nonvanishing charges $\zeta_i$ concentrated at $t=1$, while mass from the functions $z_i$ may also escape at $t=1$ due to the lack of an integrability hypothesis on $|\bar g_u|^{-1}$ near that point. Consequently, we resort to Banach limits to handle these sequences with limits in the dual space of $L^\infty$.
	\end{remark}
	
	\begin{remark}\label{rem:t5scope_b}
		In contrast with Theorem \ref{t4}, however, the conclusion is not that \emph{every} tuple is normal: the standing hypotheses $\bar g(0)=\bar g_u(0)=0$ keep Proposition \ref{prop:trivial}(i) in force, so a degenerate tuple, right-concentrated at $t=0$, always coexists with the normal tuple that Theorem \ref{t5} constructs. What the theorem secures is the \emph{existence} of a nondegenerate normal tuple whose charge, if any, sits only at the terminal instant, together with the quantitative control \eqref{massboundmixed}. Example \ref{ex:mixed} exhibits precisely this configuration.
	\end{remark}
	
	\begin{remark}\label{rem:t5variant}
		The initial-side hypothesis \textbf{(C2-loc)} may be replaced by the localized form of \textbf{(C1)}: an integrable $v_1$ with $\bar f_uv_1=0$ and $\langle\bar g_u,v_1\rangle\ge 1$ a.e. on $\{\bar g=0\}\cap[0,\varepsilon_0]$. Both hypotheses serve the single purpose of supplying a fixed integrable majorant for the approximating densities near $t=0$, and either suffices: under the \textbf{(C1)}-localized variant the majorant is $\|\bar L_u\|_{L^\infty}|v_1|$, as in the proof of Theorem \ref{t1}, and the pointwise bound \eqref{star} forces the density of the limit tuple to vanish a.e. on $[0,\varepsilon_0]$ once $\lambda_0=0$, so that the nonstrict part of \textbf{(C4$\setminus$0)} is not needed on $A_\varepsilon\cap[0,\eta]$ in that variant. We state Theorem \ref{t5} under \textbf{(C2-loc)} because it is the form in Example \ref{ex:mixed}: at $m=1$ the kernel direction $\bar f_uv_1=0$ with $v_1\neq0$ demanded by \textbf{(C1)} is unavailable when the control enters the dynamics.
	\end{remark}
	
	\begin{remark}\label{rem:t5balance}
		Suppose, in addition, that $|\bar g_u|^{-1}$ fails to be integrable at the terminal instant in the sense of \eqref{nonintterm} and that the data admit essential limits at $t=1$. Then the terminal balance identity of Proposition \ref{prop:termbalance} applies to the tuple constructed by Theorem \ref{t5}. Since that tuple is normal, branch (i) of the proposition is excluded; hence either $\bar L_u(1)=0$, or the data lie on the ray of branch (ii) and the terminal mass is pinned to $\zeta_1=c_0\lambda_0$, with $c_0>0$ determined by $\bar L_u(1)^T=-c_0\,\bar f_u(1)^T\bar g_x(1)^T$. No separate compatibility hypothesis is therefore needed: the mere existence of the normal tuple forces the data into the non-abnormal branch. This is the configuration realized with $c_0=2$ by Example \ref{ex:mixed}.
	\end{remark}
	
	\section{Examples}\label{sec:examples}
	
	This section illustrates the results of Sections \ref{sec:when} and \ref{sec:nondeg} on problems whose multipliers can be computed in closed form, so that each condition can be seen at work and its reach delimited. Subsection \ref{subsec:exC2C3} exhibits, on a one-parameter family with a scalar control, the integrability threshold behind conditions \textbf{(C2)}--\textbf{(C3)}; Subsection \ref{subsec:exterminal} transports that family to the terminal instant and completes an initial/terminal, integrable/non-integrable classification of the model problems of the paper; Subsection \ref{subsec:exC1} realizes condition \textbf{(C1)} with a two-dimensional control, its defining direction $\bar f_uv=0$ with $v\neq 0$ being unavailable to a scalar control that enters the dynamics; Subsection \ref{subsec:exC4} verifies condition \textbf{(C4)} on the terminal problem (T) of Example \ref{ex:be43}; Subsection \ref{subsec:exbranches} delimits, with two further families, the two branches of the terminal balance identity of Proposition \ref{prop:termbalance}; and Subsection \ref{subsec:exmixed} exhibits a two-parameter problem, nonregular at \emph{both} endpoints, realizing the mixed regime of Theorem \ref{t5}.
	
	\subsection{The initial instant: the integrability threshold of \textbf{(C2)} and normality via \textbf{(C3)}}\label{subsec:exC2C3}
	
	The one-parameter family introduced next illustrates Theorems \ref{t2} and \ref{t3}, and shows the integrability requirement in \textbf{(C2)} to be sufficient but not necessary. For each $\alpha\in(0,1]$ consider
	\begin{equation*}
		\text{(E$_\alpha$)}~\left\{	\begin{array}{lll}
			&\text{min}&J(x,u)=\displaystyle\int_0^1 u^2(t)\,dt,\\
			& s.t.&(x,u)\in \AC{}\times \Linf,\\
			&&\dot x(t)=u(t)\quad \text{a.e.},\\
			&&t+t^\alpha-x(t)-t^\alpha u(t)\leq 0\quad \text{a.e.},\\
			&&x(0)=0,
		\end{array}\right.
	\end{equation*}
	so that $f(t,x,u)=u$, $L(t,x,u)=u^2$ and $g(t,x,u)=t+t^\alpha-x-t^\alpha u$. The constraint is affine in $(x,u)$ and the cost is convex, hence (E$_\alpha$) is a convex problem; consequently any admissible process satisfying the conditions of Theorem \ref{pmp} with $\lambda_0\neq 0$ is a (global, hence weak local) minimizer.
	
	The process $(\bar x(t),\bar u(t))=(t,1)$ is admissible: $\dot{\bar x}=1=\bar u$, $\bar x(0)=0$, and $\bar g(t)=t+t^\alpha-t-t^\alpha=0$, so the constraint is active throughout $[0,1]$. Along this process
	\[\bar g(t)\equiv 0,\qquad \bar g_x(t)\equiv -1,\qquad \bar g_u(t)=-t^\alpha,\]
	all continuous on $[0,1]$, with $\bar g(0)=0$ and $\bar g_u(0)=0$: the process is nonregular precisely at the initial instant, where the degeneracy phenomenon of Section \ref{sec:when} may arise.
	
	\begin{example}[The nondegenerate regime $\alpha\in(0,1)$]\label{ex:nondeg}
		Let $\alpha\in(0,1)$. Since $\bar g\equiv 0$ we have $A_\varepsilon=[0,1]$, and taking the non-decreasing function $\omega(t)=t^\alpha$ we obtain $|\bar g_u(t)|=t^\alpha\geq\omega(t)$ together with $\int_0^1\omega(t)^{-1}dt=\int_0^1 t^{-\alpha}dt=\tfrac{1}{1-\alpha}<\infty$. Hence condition \textbf{(C2)} holds, and Theorem \ref{t2} guarantees nondegenerate multipliers with vanishing charge. Moreover, the pair $(y,v)=(t,1)$ solves $\dot y=\bar f_x y+\bar f_u v=v$, $y(0)=0$, and satisfies $\bar g_x(t)y(t)+\bar g_u(t)v(t)=-t-t^\alpha<0$ for $t\in(0,1]$; hence \textbf{(C3)} holds as well, and Theorem \ref{t3} ensures a multiplier with $\lambda_0\neq 0$.
		
		These multipliers can be exhibited in closed form. Set $\beta\equiv 0$ and $\zeta=0$. The stationarity condition reads $0=\lambda+2-t^\alpha z$, while the costate equation reduces to $\dot\lambda=z$; together with the transversality condition $\lambda(1)=0$ these give $\lambda(t)=t^\alpha z(t)-2$ and the linear equation $\dot z=\big(t^{-\alpha}-\alpha t^{-1}\big)z$, whose solution with $\lambda(1)=0$ (equivalently $z(1)=2$) is
		\[z(t)=2\,t^{-\alpha}\exp\!\left(\frac{t^{1-\alpha}-1}{1-\alpha}\right),\qquad \lambda(t)=2\exp\!\left(\frac{t^{1-\alpha}-1}{1-\alpha}\right)-2.\]
		One checks directly that $\lambda_0=1$, $z\geq 0$, $\beta\equiv 0$, $\zeta=0$ and $\lambda\in \operatorname{BV}$ satisfy all the conditions of Theorem \ref{pmp}, so $(\bar x,\bar u)$ is the minimizer of (E$_\alpha$).
		
		It is instructive to note that $z(t)\sim 2e^{-1/(1-\alpha)}\,t^{-\alpha}$ as $t\to 0^+$, so the costate multiplier $z$ blows up at the nonregular instant yet remains integrable precisely because $\alpha<1$. This is exactly the behaviour predicted by the proof of Theorem \ref{t2}, where the uniform bound $0\leq z_i\leq K/\omega = K\,t^{-\alpha}$ confines the mass of the approximating multipliers and prevents its concentration at $t=0$.
	\end{example}
	
	\begin{example}[The borderline regime $\alpha=1$]\label{ex:borderline}
		For $\alpha=1$ the constraint becomes $2t-x-tu\leq 0$ and $\bar g_u(t)=-t$, so that
		\[\int_0^1\frac{dt}{|\bar g_u(t)|}=\int_0^1\frac{dt}{t}=\infty.\]
		No non-decreasing $\omega$ with $\int_0^1\omega^{-1}<\infty$ can satisfy $t=|\bar g_u(t)|\geq\omega(t)$, so the integrability requirement of condition \textbf{(C2)} fails. Taking the limit $\alpha\to 1^-$ in the formulas above (or solving the corresponding equations directly) yields the constant multiplier $z\equiv 2$, $\lambda(t)=2t-2$, $\lambda_0=1$, $\beta\equiv 0$, $\zeta=0$, which still satisfies Theorem \ref{pmp}: the problem remains nondegenerate. This shows that condition \textbf{(C2)} is sufficient but not necessary; the favourable outcome here is owed to $\bar g_x\equiv -1$ being bounded away from zero and to the cost depending on the control alone.
	\end{example}
	
	\begin{remark}
		The contrast between Examples \ref{ex:nondeg} and \ref{ex:borderline} delimits the reach of condition \textbf{(C2)}. When the integrable lower bound on $|\bar g_u|$ fails, nondegeneracy is no longer guaranteed by Theorem \ref{t2}, and whether it is actually lost depends on further structure of the problem. Example \ref{ex:borderline} remains nondegenerate; but if, in addition, $\bar g_x$ vanishes or the cost is driven by the state, the borderline rate $\bar g_u\sim t$ no longer admits any integrable costate: as established in the case study of Subsection \ref{subsec:example}, the mass of the multipliers is then forced to concentrate at $t=0$, and only the degenerate charge of \eqref{trivial} survives, so the problem degenerates completely. Conditions \textbf{(C2)}--\textbf{(C3)} are precisely tailored to exclude this scenario while still allowing the Hölder-type degenerations $|\bar g_u(t)|\geq c\,t^\alpha$, $0\leq\alpha<1$, of Example \ref{ex:nondeg}.
	\end{remark}
	
	\subsection{The terminal instant under integrability: an optional charge}\label{subsec:exterminal}
	
	The two regimes just described sit at the \emph{initial} instant. Their terminal counterparts behave differently, and the next example completes the picture by exhibiting, in closed form, the terminal mirror of the integrable regime of Example \ref{ex:nondeg}. It is also the example invoked in Remark \ref{rem:termbalanceb} to show that the nonintegrability hypothesis \eqref{nonintterm} of Proposition \ref{prop:termbalance} cannot be dropped.
	
	\begin{example}[The terminal integrable regime]\label{ex:terminalint}
		Let $\alpha\in(0,1)$ and consider
		\begin{equation*}
			\text{(E$^1_\alpha$)}~\left\{	\begin{array}{lll}
				&\text{min}&J(x,u)=\displaystyle\int_0^1 u^2(t)\,dt,\\
				& s.t.&(x,u)\in \AC{}\times \Linf,\\
				&&\dot x(t)=u(t)\quad \text{a.e.},\\
				&&t+(1-t)^\alpha-x(t)-(1-t)^\alpha u(t)\leq 0\quad \text{a.e.},\\
				&&x(0)=0,
			\end{array}\right.
		\end{equation*}
		obtained from (E$_\alpha$) by transporting the weight $t^\alpha$ to the terminal instant. The process $(\bar x(t),\bar u(t))=(t,1)$ is admissible and the constraint is active throughout: along it
		\[\bar g(t)\equiv 0,\qquad \bar g_x(t)\equiv -1,\qquad \bar g_u(t)=-(1-t)^\alpha,\]
		so the process is nonregular precisely at $t=1$, where $\bar g_u$ vanishes; since $\alpha<1$, the reciprocal $|\bar g_u|^{-1}=(1-t)^{-\alpha}$ is integrable near $t=1$, and $\bar g_x(1)=-1\neq 0$, so neither case of Proposition \ref{prop:trivial} applies. As with (E$_\alpha$), the problem is convex, so any tuple with $\lambda_0\neq 0$ certifies global optimality.
		
		\emph{All the multiplier tuples can be computed.} On every subinterval $[0,1-\delta]$, $\delta\in(0,1)$, the process is regular. Take $v_0(t)=-(1-t)^{-\alpha}$, bounded there, so Lemma \ref{lem:chargesupport} shows that the pure charge $\zeta$ of any tuple satisfying Theorem \ref{pmp} vanishes on the measurable subsets of $[0,1-\delta]$ for every $\delta$: either $\zeta=0$, or $\zeta$ is left-concentrated at $t=1$ in the sense of Definition \ref{charges}(v). Write $m\coloneqq\zeta([0,1])\geq 0$. The defining formula in Theorem \ref{pmp}(c), with $\bar g_x\equiv-1$, gives $\beta(t)=\zeta\bigl((t,1]\bigr)=m$ for $t\in[0,1)$ and $\beta(1)=0$. Since $\bar f_x\equiv 0$, $\bar L_x\equiv 0$, $\bar f_u\equiv 1$ and $\bar L_u\equiv 2$, the first stationarity condition reads $\lambda(t)=(1-t)^\alpha z(t)-2\lambda_0$ a.e., while the costate equation states that $p\coloneqq\lambda+\beta$ is absolutely continuous with $\dot p=z$ and $p(1)=0$. Eliminating $\lambda$ yields the linear equation $(1-t)^\alpha\dot z=\bigl[1+\alpha(1-t)^{\alpha-1}\bigr]z$ on $[0,1)$, whose solutions compatible with $p(1)=0$ form the one-parameter family, indexed by $m\in[0,2\lambda_0]$ with $C\coloneqq 2\lambda_0-m\geq 0$,
		\begin{equation}\label{Etermfamily}
			z(t)=C\,(1-t)^{-\alpha}\exp\!\left(-\frac{(1-t)^{1-\alpha}}{1-\alpha}\right),\qquad
			\lambda(t)=C\,\exp\!\left(-\frac{(1-t)^{1-\alpha}}{1-\alpha}\right)-2\lambda_0\quad\text{on }[0,1),
		\end{equation}
		with $\lambda(1)=0$; one has $z\geq 0$ if and only if $C\geq 0$, that is, $m\leq 2\lambda_0$, and $z\in\Lone$ for every such $C$ because $\alpha<1$. Three findings deserve emphasis.
		
		First, a \emph{charge-free normal tuple exists}: taking $m=0$ and $\lambda_0=1$ in \eqref{Etermfamily} gives $\zeta=0$, $\beta\equiv 0$ and an absolutely continuous costate; the density blows up like $(1-t)^{-\alpha}$ at the nonregular instant yet remains integrable, in exact mirror image of Example \ref{ex:nondeg}. Integrable terminal nonregularity therefore does \emph{not} force a charge.
		
		Second, \emph{charged normal tuples coexist} for every $m\in(0,2\lambda_0]$. At the extreme $m=2\lambda_0$, that is $C=0$, the density vanishes identically, $\lambda\equiv-2\lambda_0$ on $[0,1)$, and the tuple carries a pure charge of mass $2\lambda_0$ left-concentrated at $t=1$; it has the structure of the tuple exhibited for (T) in Remark \ref{rem:be43}. The charge is here optional: neither forced, as in Example \ref{ex:be43}, nor forbidden.
		
		Third, \emph{every tuple is normal}: if $\lambda_0=0$, then $C=-m\leq 0$, and $z\geq 0$ forces $C=0$; hence $m=0$ and $z\equiv 0$, contradicting the nontriviality condition. In particular no degenerate tuple exists and, by the first finding, the problem is as well behaved as the regular theory would predict, despite the terminal nonregularity.
	\end{example}
	
	\begin{remark}[The initial/terminal, integrable/non-integrable asymmetry]\label{rem:asym}
		Example \ref{ex:terminalint} fills the last cell of a two-by-two picture of the model problems of the paper, organized by the location of the isolated nonregular instant and by the integrability of $|\bar g_u|^{-1}$ near it:
		\begin{center}
			\begin{tabular}{l|l|l}
				& \emph{initial instant} $t^*=0$ & \emph{terminal instant} $t^*=1$\\
				\hline
				$|\bar g_u|^{-1}$ non-integrable & (E): completely degenerate; only a & (T): pure charge forced, yet every\\
				& pure charge survives (Subsection \ref{subsec:example}) & tuple is normal (Remark \ref{rem:be43})\\
				\hline
				$|\bar g_u|^{-1}$ integrable & (E$_\alpha$): normal charge-free tuple & (E$^1_\alpha$) $\alpha\in(0,1)$: every tuple normal; charge\\
				& (Example \ref{ex:nondeg}); the degenerate tuple & \emph{optional}: charge-free and charged\\
				& of Proposition \ref{prop:trivial}(i) coexists & tuples coexist (Example \ref{ex:terminalint})\\
			\end{tabular}
		\end{center}
		In (E), (E$_\alpha$) and (E$^1_\alpha$) the constraint is active throughout $[0,1]$, whereas in (T) it is active only at $t=1$. Two comments are in order. (a) The columns are not symmetric. At the initial instant, Proposition \ref{prop:trivial}(i) always furnishes a degenerate tuple (cf. Remark \ref{rem:t4scope1}); whether informative multipliers coexist with it depends on finer structure; compare (E), completely degenerate, with the borderline problem of Example \ref{ex:borderline}, equally non-integrable at $t=0$ yet nondegenerate. In the two terminal problems, where $\bar g_x(1)\neq 0$, no degenerate tuple exists at all, and the question becomes whether the pure charge is forced: it is in (T), and it is optional in (E$^1_\alpha$). The examples might suggest that both the integrability of $|\bar g_u|^{-1}$ and the local activation pattern of the constraint intervene in this alternative; Proposition \ref{prop:termbalance} shows that, whenever the data admit essential limits at $t=1$, the activation pattern is in fact immaterial, and the alternative is decided by the integrability of $|\bar g_u|^{-1}$ together with the position of $\bar L_u(1)^T$ relative to the ray spanned by $-\bar f_u(1)^T\bar g_x(1)^T$. (b) Condition \textbf{(C1)} interacts with the rows: by the Cauchy--Schwarz inequality, \textbf{(C1)} gives $1\leq\langle\bar g_u,v\rangle\leq|\bar g_u|\,|v|$, hence $|\bar g_u|^{-1}\leq|v|\in\Lone$, a.e. on the active set $\{\bar g=0\}$. Consequently, whenever \textbf{(C1)} holds, a nonregular instant around which the constraint is active on a set of full measure, as at $t=1$ in (E$^1_\alpha$) with $\alpha\in(0,1)$, necessarily falls in the integrable row, where the terminal example shows a charge-free tuple to be available. \textbf{(C1)} is silent, by contrast, on instants with thin activation, such as the terminal instant of (T), where the active set is Lebesgue null, \textbf{(C1)} holds vacuously, and the charge is nonetheless forced.
	\end{remark}
	
	\subsection{Condition \textbf{(C1)} and Theorem \ref{t1}: the role of the kernel of $\bar f_u$}\label{subsec:exC1}
	
	\begin{example}[A two-control problem illustrating \textbf{(C1)}]\label{ex:C1}
		Fix $\alpha\in(0,1)$ and consider the problem with scalar state and
		two-dimensional control
		\begin{equation*}
			\text{(M$_\alpha$)}~\left\{\begin{array}{lll}
				&\text{min}& J(x,u)=\displaystyle\int_0^1\Big(\tfrac12 u_1(t)^2+u_2(t)\Big)\,dt,\\
				& s.t.&(x,u)\in \AC{}\times \Linf[2],\\
				&&\dot x(t)=u_1(t)\quad\text{a.e.},\\
				&&t^\alpha\bigl(1-u_2(t)\bigr)\leq 0\quad\text{a.e.},\\
				&&x(0)=0,
			\end{array}\right.
		\end{equation*}
		so that $f(t,x,u)=u_1$, $L(t,x,u)=\tfrac12u_1^2+u_2$ and
		$g(t,x,u)=t^\alpha(1-u_2)$, with $g$ independent of $x$.
		
		The problem decouples: the mixed constraint reads $u_2\geq 1$ a.e., so
		minimizing $\int_0^1 u_2$ forces $\bar u_2\equiv 1$, while minimizing
		$\tfrac12\int_0^1 u_1^2$ with free right endpoint forces $\bar u_1\equiv 0$
		and $\bar x\equiv 0$. Hence $(\bar x,\bar u)=(0,(0,1))$ is the unique global
		minimizer. Along it
		\[\bar g\equiv 0,\qquad \bar g_x\equiv 0,\qquad \bar g_u=(0,-t^\alpha),\qquad
		\bar f_u=(1,0),\qquad \bar L_u=(0,1),\]
		so the constraint is active on all of $[0,1]$ and the process is nonregular
		precisely at $t=0$, where $\bar g_u(0)=0$; on each $[\delta,1]$ the bounded
		direction $v_0=(0,-\delta^{-\alpha})$ gives $\bar g_u v_0\geq 1$, so
		Definition \ref{reg} holds there and hypothesis \textbf{(R)} is satisfied.
		
		Condition \textbf{(C1)} holds with the \emph{integrable} direction
		$v=(0,-t^{-\alpha})\in\Lone[2]$: indeed $\bar f_u v=0$ and
		$\langle\bar g_u,v\rangle=(-t^\alpha)(-t^{-\alpha})=1$ on the active set $\{\bar g=0\}=[0,1]$,
		while $\|v\|_{L^1}=\int_0^1 t^{-\alpha}\,dt=\tfrac{1}{1-\alpha}<\infty$ because
		$\alpha<1$. Note that $v\notin\Linf[2]$; the bounded directions of
		Definition \ref{reg} do not suffice, and it is essential that \textbf{(C1)}
		admits $L^1$ directions.
		
		Theorem \ref{t1} therefore applies, and the multipliers can be exhibited in
		closed form. Since $\bar g_x\equiv 0$ we have $\beta\equiv 0$ and $\lambda$
		absolutely continuous; the costate equation $-\dot\lambda=0$ with
		$\lambda(1)=0$ gives $\lambda\equiv 0$, and the first stationarity condition
		$\lambda_0-t^\alpha z=0$ gives $z(t)=\lambda_0 t^{-\alpha}$, which lies in
		$\Lone$ exactly because $\alpha<1$. Taking $\zeta=0$ and normalizing
		$\lambda_0=1$,
		\[\lambda_0=1,\qquad \lambda\equiv 0,\qquad \beta\equiv 0,\qquad
		z(t)=t^{-\alpha},\qquad \zeta=0,\]
		satisfies all conditions of Theorem \ref{pmp}: the multipliers are normal,
		charge-free, and the costate is absolutely continuous, as Theorem \ref{t1}
		predicts. Moreover the lower bound of Theorem \ref{t1} is sharp here: with
		$\|\bar L_u\|_{L^\infty}=1$ and $\|v\|_{L^1}=\tfrac{1}{1-\alpha}$ it reads
		$\lambda_0\geq\tfrac{1-\alpha}{2-\alpha}$, and this value is attained under the
		normalization $\lambda_0+\|z\|_{L^1}=1$.
		
		Finally, $(M_\alpha)$ shares with Example \ref{ex:E} the structure
		$\bar g_x\equiv 0$ and a constraint active on a full neighborhood of the
		nonregular instant $t=0$, which rendered the \emph{scalar} problem (E)
		completely degenerate. Here the second control supplies an inward variation
		$v\in\ker\bar f_u$ that relaxes the constraint without disturbing the
		state and \textbf{(C1)} converts it into a normal multiplier. The problem
		still admits the degenerate tuple of Proposition \ref{prop:trivial}(i), since
		$t^*=0$; Theorem \ref{t1} does not remove it, but furnishes a nondegenerate
		normal tuple alongside it, so that, unlike (E), the problem is not completely
		degenerate.
	\end{example}
	
	\subsection{Condition \textbf{(C4)} and Theorem \ref{t4}: normality of every tuple}\label{subsec:exC4}
	
	\begin{example}[Condition \textbf{(C4)} at a terminal instant]\label{ex:C4}
		We revisit the terminal problem (T) of Example \ref{ex:be43} and verify that
		it satisfies condition \textbf{(C4)}, so that Theorem \ref{t4} applies. Along
		the minimizer $(\bar x,\bar u)=(t,0)$ one has $\bar f_x\equiv 0$,
		$\bar f_u\equiv -1$, $\bar g_x\equiv 1$, $\bar g_u\equiv 0$, $\bar L_x\equiv 0$
		and $\bar L_u\equiv 1$, and the constraint is active only at $t=1$, so
		$A_\varepsilon=\{\bar g\geq-\varepsilon\}=[1-\varepsilon,1]$. Take
		$v\equiv 1$ and let $y$ solve $\dot y=\bar f_x y+\bar f_u v=-1$, $y(0)=0$, that
		is $y(t)=-t$. Then
		\[\bar g_x(t)y(t)+\bar g_u(t)v(t)=y(t)=-t\leq-(1-\varepsilon)\qquad\text{on }A_\varepsilon,\]
		so \textbf{(C4)} holds with $\delta=1-\varepsilon$ for every
		$\varepsilon\in(0,1)$. By Theorem \ref{t4}, \emph{every} tuple of multipliers
		for (T) is normal, $\lambda_0>0$, and obeys the mass bound \eqref{massbound},
		which here reads
		\[\|z\|_{L^1}+\zeta([0,1])\ \leq\ \frac{\lambda_0}{1-\varepsilon}\int_0^1\bigl|\bar L_x(t)y(t)+\bar L_u(t)v(t)\bigr|\,dt=\frac{\lambda_0}{1-\varepsilon},\]
		since $\bar L_x y+\bar L_u v\equiv 1$. Optimizing over $\varepsilon\in(0,1)$
		gives $\|z\|_{L^1}+\zeta([0,1])\leq\lambda_0$, and this is sharp: the normal
		tuple of Remark \ref{rem:be43}, with $z\equiv 0$ and a charge $\zeta$ of mass
		$\lambda_0$ left-concentrated at $t=1$, attains it with equality. Thus,
		although the pure charge cannot be nullified at this terminal nonregular
		instant (Example \ref{ex:be43}), Theorem \ref{t4} keeps the maximum principle
		fully informative: no tuple is degenerate and the mass of the charge is
		controlled.
	\end{example}
	
	\subsection{The two branches of the terminal balance identity of Proposition \ref{prop:termbalance}}\label{subsec:exbranches}
	
	The terminal column of Remark \ref{rem:asym} exhibits two regimes: at the integrable instant of Example \ref{ex:terminalint} the pure charge is optional, its mass $m$ ranging freely over $[0,2\lambda_0]$, whereas in the non-integrable problem (T) the charge is forced (Example \ref{ex:be43}) and the sharp bound of Example \ref{ex:C4} is attained by a charge of mass exactly $\lambda_0$. Both findings are accounted for by the terminal balance identity of Proposition \ref{prop:termbalance}: at a terminal instant where $|\bar g_u|^{-1}$ fails to be integrable, the stationarity condition, the integrability of the density $z$ and the transversality condition combine into an exact balance between the cost multiplier and the mass that the charge deposits at $t=1$. Recall that its hypotheses bear on the problem data along the nominal process alone; no assumption is made on the multipliers, nor on the activation pattern of the constraint. The two branches of that proposition are both realized, and both under a constraint active on all of $[0,1]$; the extreme opposite of the ``thin'' activation of (T).

	\begin{example}[The terminal non-integrable regime under full activation]\label{ex:terminalforced}
		Let $\alpha\geq 1$ and consider the problem (E$^1_\alpha$) defined by the formulas of Example \ref{ex:terminalint}. This range for $\alpha$ now places the family of problems in the non-integrable cells of Remark \ref{rem:asym}. The process $(\bar x,\bar u)=(t,1)$ remains admissible, with
		\[\bar g\equiv 0,\qquad\bar g_x\equiv-1,\qquad\bar g_u(t)=-(1-t)^\alpha,\qquad\bar f_u\equiv 1,\qquad\bar L_u\equiv 2,\]
		so the constraint is active on all of $[0,1]$ and the process is nonregular precisely at $t=1$; but now $|\bar g_u|^{-1}=(1-t)^{-\alpha}$ fails to be integrable near $1$, so hypothesis \eqref{nonintterm} of Proposition \ref{prop:termbalance} holds, and the data, being continuous, admit essential limits everywhere. Since $\bar L_u(1)=2$ and $\bar f_u(1)^T\bar g_x(1)^T=-1$, case (ii) of the proposition applies with $c_0=2$: \emph{every} tuple must satisfy $\zeta_1=2\lambda_0$.
		
		The tuples can again be computed in full, and confirm the prediction. As in Example \ref{ex:terminalint}, Lemma \ref{lem:chargesupport} confines the pure charge to the terminal instant, and the elimination performed there produces the same linear equation $(1-t)^\alpha\dot z=\bigl[1+\alpha(1-t)^{\alpha-1}\bigr]z$ on $[0,1)$, whose nonnegative solutions are now
		\[z(t)=C\,(1-t)^{-\alpha}\exp\!\left(\frac{(1-t)^{1-\alpha}-1}{\alpha-1}\right)\ \ (\alpha>1),\qquad z(t)=C\,(1-t)^{-2}\ \ (\alpha=1),\qquad C\geq 0.\]
		For $C>0$ these fail to be integrable near $t=1$ (for $\alpha>1$ the exponent even diverges to $+\infty$), so $z\in\Lone$ forces $C=0$. Hence $z\equiv 0$ and $\lambda=-2\lambda_0$ a.e. on $[0,1)$, and the absolute continuity of $p=\lambda+\beta$ together with $p(1)=0$ forces $\beta=2\lambda_0$ on $[0,1)$, that is, $\zeta\bigl((t,1]\bigr)=2\lambda_0$ for every $t<1$. If $\lambda_0=0$ the tuple vanishes identically, contradicting nontriviality; therefore \emph{every tuple is normal} and, after the normalization $\lambda_0=1$, consists of $z\equiv 0$, $\lambda=-2\chi_{[0,1)}$, $\beta=2\chi_{[0,1)}$, and a pure charge of mass $2$ left-concentrated at $t=1$ (such charges exist; see Remark \ref{rem:be43}). One verifies directly, as in Example \ref{ex:terminalint}, that this tuple fulfills all the conditions of Theorem \ref{pmp}, and the convexity of the problem, which is unaffected by $\alpha$, certifies that $(\bar x,\bar u)$ is a global minimizer for every $\alpha\geq 1$.
		
		Two situations deserve emphasis. First, the outcome is qualitatively that of (T); charge forced, mass pinned to the cost multiplier, and every tuple normal, although the activation pattern is the extreme opposite: no hypothesis phrased in terms of the activation pattern alone can force abnormality at a non-integrable terminal instant. Second, the contrast with the initial instant is complete: the same non-integrable rate placed at $t=0$, with $\bar g_x\equiv 0$, produces the total collapse of Subsection \ref{subsec:example}, whereas here the costate absorbs the escaping mass as a terminal jump whose size the balance identity prescribes.
	\end{example}
	
	\begin{example}[Complete degeneration at the terminal instant]\label{ex:terminaldeg}
		Case (i) of Proposition \ref{prop:termbalance} also occurs, and in the strongest possible form. For $\alpha\geq 1$, consider the terminal mirror of the two-control problem of Example \ref{ex:C1}:
		\begin{equation*}
			\text{(M$^1_\alpha$)}~\left\{\begin{array}{lll}
				&\text{min}& J(x,u)=\displaystyle\int_0^1\Big(\tfrac12 u_1(t)^2+u_2(t)\Big)\,dt,\\
				& s.t.&(x,u)\in \AC{}\times \Linf[2],\\
				&&\dot x(t)=u_1(t)\quad\text{a.e.},\\
				&&(1-t)^\alpha\bigl(1-u_2(t)\bigr)\leq 0\quad\text{a.e.},\\
				&&x(0)=0.
			\end{array}\right.
		\end{equation*}
		The constraint amounts to $u_2(t)\geq 1$ a.e., so the problem decouples exactly as in Example \ref{ex:C1} and $(\bar x,\bar u)=(0,(0,1))$ is the unique global minimizer. Along it,
		\[\bar g\equiv 0,\qquad\bar g_x\equiv 0,\qquad\bar g_u(t)=\bigl(0,-(1-t)^\alpha\bigr),\qquad\bar f_u\equiv(1,0),\qquad\bar L_u\equiv(0,1):\]
		the constraint is active throughout, the process is nonregular precisely at $t=1$, and \eqref{nonintterm} holds for $\alpha\geq 1$. Here $\bar f_u(1)^T\bar g_x(1)^T=0$ while $\bar L_u(1)=(0,1)\neq 0$: case (i) of Proposition \ref{prop:termbalance} ensures $\lambda_0=0$ for \emph{every} tuple.
		
		Direct computation confirms this, and more. Since $\bar g_x\equiv 0$, the formula in Theorem \ref{pmp}(c) gives $\beta\equiv 0$ and $\lambda$ absolutely continuous; the costate equation $-\dot\lambda=0$ with $\lambda(1)=0$ yields $\lambda\equiv 0$, and the first stationarity condition reduces to its second component, $\lambda_0=(1-t)^\alpha z(t)$ a.e. For $\lambda_0>0$ this would force $z(t)=\lambda_0(1-t)^{-\alpha}\notin\Lone$; hence $\lambda_0=0$, $z\equiv 0$, and the nontriviality condition demands $\zeta([0,1])>0$. On every interval $[0,1-\delta]$ the process is regular with $v_0=(0,-\delta^{-\alpha})$, so Lemma \ref{lem:chargesupport} forces $\zeta$ to be left-concentrated at $t=1$; conversely, every pure charge $0\neq\zeta\geq 0$ in $\ba$ left-concentrated at $t=1$ satisfies the remaining conditions of Theorem \ref{pmp}: the charge stationarity holds because $\bigl|\int_E(1-t)^\alpha\,d\zeta\bigr|\leq\varepsilon^\alpha\,\zeta([0,1])$ for every $\varepsilon>0$ and every $E\in\mathfrak{L}$, and the slackness conditions are trivial since $\bar g\equiv 0$. The tuples are therefore exactly the degenerate tuples of Definition \ref{def:trivial} with $t^*=1$ and $\lambda\equiv\beta\equiv 0$: \emph{the problem is completely degenerate}, now at the terminal instant. This realizes, for all tuples at once, the configuration of Proposition \ref{prop:trivial}(ii), and accords with Remark \ref{rem:t4scope1}: since $\bar g_x(1)=0$, no margin of the type \textbf{(C4)} can hold near $t=1$.
		
		The mechanism behind the two branches is the following. With $\bar g_x\equiv 0$ the terminal charge is invisible to the costate, so nothing can offset $\lambda_0\,\bar L_u(1)^T$ in \eqref{balance} and the cost multiplier must vanish. When instead $\bar f_u(1)^T\bar g_x(1)^T$ opposes $\bar L_u(1)^T$, the terminal jump of the costate balances the cost term, and normality survives with the pinned mass $\zeta_1=c_0\lambda_0$. Within the non-integrable terminal cell of Remark \ref{rem:asym}, then, (T) and (E$^1_\alpha$) with $\alpha\geq 1$ realize case (ii) while (M$^1_\alpha$) realizes case (i): the cell splits along the data ray of Proposition \ref{prop:termbalance}, not along the activation pattern. Finally, for $\alpha\in(0,1)$ the mirrored computation of Example \ref{ex:C1} furnishes the normal charge-free tuple $\lambda_0=1$, $\lambda\equiv 0$, $z(t)=(1-t)^{-\alpha}$, $\beta\equiv 0$, $\zeta=0$: the family (M$^1_\alpha$) crosses from fully informative to completely degenerate exactly at the integrability threshold, in mirror image of the initial-instant families of this section.
	\end{example}
	
	\subsection{Nonregularity at both endpoints: the mixed regime of Theorem \ref{t5}}\label{subsec:exmixed}
	
	\begin{example}[Nonregularity at both endpoints]\label{ex:mixed}
		Fix $\alpha\in(0,1)$ and $\gamma\ge 1$, write $\varphi(t)=t^\alpha(1-t)^\gamma$, and consider
		\begin{equation*}
			\text{(G$_{\alpha,\gamma}$)}~\left\{\begin{array}{lll}
				&\text{min}&J(x,u)=\displaystyle\int_0^1 u^2(t)\,dt,\\
				& s.t.&(x,u)\in \AC{}\times \Linf,\\
				&&\dot x(t)=u(t)\quad\text{a.e.},\\
				&&t+\varphi(t)-x(t)-\varphi(t)u(t)\le 0\quad\text{a.e.},\\
				&&x(0)=0.
			\end{array}\right.
		\end{equation*}
		The process $(\bar x,\bar u)=(t,1)$ is admissible, and the constraint is active throughout: along it
		\[\bar g\equiv 0,\qquad\bar g_x\equiv-1,\qquad\bar g_u=-\varphi,\qquad\bar f_u\equiv 1,\qquad\bar L_u\equiv 2,\qquad\bar f_x=\bar L_x\equiv 0.\]
		Since $\varphi$ vanishes only at $t=0$ and $t=1$, the process is nonregular exactly at the two endpoints, and $|\bar g_u|^{-1}=\varphi^{-1}$ is integrable at $0$ (because $\alpha<1$) but not at $1$ (because $\gamma\ge1$).
		
		For any admissible $(x,u)$ one has $\int_0^1u^2=\int_0^1(u-1)^2+2x(1)-1$, since $\int_0^1(2u-1)\,dt=2x(1)-1$. The constraint gives $x(t)\ge t+\varphi(t)\bigl(1-u(t)\bigr)$ a.e.; as $\varphi(t)\to0$ when $t\to1^-$ and $x$ is continuous, this forces $x(1)\ge 1$. Hence $J\ge 1=J(\bar x,\bar u)$, with equality if and only if $u\equiv1$: $(\bar x,\bar u)$ is the unique global minimizer.
		
		On every $[a,b]\subset(0,1)$ the bounded direction $v_0=-\varphi^{-1}$ gives $\bar g_uv_0\equiv1$, so \textbf{(R-mid)} holds. For \textbf{(C2-loc)}, take $\varepsilon_0=\tfrac12$ and $\omega(t)=2^{-\gamma}t^\alpha$: since $(1-t)^\gamma\ge2^{-\gamma}$ on $[0,\tfrac12]$, one has $|\bar g_u(t)|=t^\alpha(1-t)^\gamma\ge\omega(t)$ there, $\omega$ is non-decreasing, and $\int_0^{1/2}\omega^{-1}<\infty$ because $\alpha<1$. For \textbf{(C4$\setminus$0)}, take $(y,v)=(t,1)$, which solves $\dot y=v$, $y(0)=0$; then
		\[w=\bar g_xy+\bar g_uv=-t-\varphi(t)\le0\ \text{ on }[0,1],\qquad w\le-\eta\ \text{ on }[\eta,1],\]
		so $\delta_\eta=\eta$. Theorem \ref{t5} therefore applies, and (G$_{\alpha,\gamma}$) admits a normal tuple whose charge is confined to $t=1$.
		
		Condition \textbf{(C1)} is unavailable at $m=1$, since $\bar f_u\equiv1$ forces $v\equiv0$ in $\bar f_uv=0$, hence $\langle\bar g_u,v\rangle\equiv0<1$. The global condition \textbf{(C2)} fails, as a non-decreasing $\omega$ with integrable reciprocal cannot lie below $|\bar g_u|=\varphi\to0$ at $t=1$; and \textbf{(C3)}, which presupposes \textbf{(C2)}, fails with it. Finally \textbf{(C4)} fails near the origin exactly as in Remark \ref{rem:t4scope1}, because $\bar g(0)=\bar g_u(0)=0$. Only the mixed-regime Theorem \ref{t5} covers this process.
		
		The elimination proceeds as in Example \ref{ex:terminalint}. With $\bar g_x\equiv-1$ the charge deposits $\beta(t)=\zeta\bigl((t,1]\bigr)$; setting $p=\lambda+\beta$, the stationarity condition gives $\lambda=\varphi z-2\lambda_0$ while $\dot p=z$ and $p(1)=0$, hence $\varphi\dot z=(1-\varphi')z$ on $[0,1)$, with solutions
		\[z(t)=C\,\varphi(t)^{-1}\exp\!\left(\int_{1/2}^t\frac{ds}{\varphi(s)}\right),\qquad C\ge0.\]
		As $t\to1^-$ the exponent diverges to $+\infty$, like $(1-t)^{1-\gamma}/(\gamma-1)$ for $\gamma>1$ and logarithmically for $\gamma=1$, where $z\sim C(1-t)^{-2}$, so $z\notin\Lone$ unless $C=0$; at $t=0$ the exponent has a finite limit (as $\alpha<1$) and imposes nothing. Hence $C=0$: for \emph{every} tuple, $z\equiv0$, $\lambda\equiv-2\lambda_0$ on $(0,1)$, and $\zeta\bigl((t,1]\bigr)=2\lambda_0$ for every $t<1$, a pure charge with terminal mass at $t=1$ equal to $2\lambda_0$. This confirms Proposition \ref{prop:termbalance}, branch (ii) with $c_0=2$, since $\bar L_u(1)=2$ and $\bar f_u(1)^T\bar g_x(1)^T=-1$, in agreement with Remark \ref{rem:t5balance}.
		
		Taking $\lambda_0=1$ yields the normal tuple predicted by Theorem \ref{t5}: $z\equiv0$, $\lambda=-2\chi_{[0,1)}$, $\beta=2\chi_{[0,1)}$, and a charge of mass $2$ left-concentrated at $t=1$. But $\lambda_0=0$ does not empty the multiplier set: because $\bar g(0)=\bar g_u(0)=0$, Proposition \ref{prop:trivial}(i) furnishes a degenerate tuple with $\lambda_0=0$, $z\equiv0$, $\lambda\equiv0$, and a pure charge right-concentrated at $t=0$ (the charge stationarity $\int_E\bar g_u\,d\zeta=0$ holds because $\varphi\to0$ at the origin). The two coexist, exactly as Remark \ref{rem:t5scope_b} describes: unlike the terminal problems (T) and (E$^1_\alpha$) of Examples \ref{ex:be43} and \ref{ex:C4} and $(E_\alpha^1)$ of Examples \ref{ex:terminalint} and \ref{ex:terminalforced}, where the initial instant is regular and \emph{every} tuple is normal, here the initial nonregularity keeps a degenerate tuple alive alongside the normal one. Theorem \ref{t5} is sharp in claiming existence rather than universality.
	\end{example}
	
	\section{Proofs}\label{sec:proofs}
	
	This section collects the proofs of Theorems \ref{t1}--\ref{t5}. Those of Theorems \ref{t1} and \ref{t2} (Subsections \ref{subsec:prooft1} and \ref{subsec:prooft2}) share a single scheme: multipliers are produced for the regular truncations of the problem, conditions \textbf{(C1)} and \textbf{(C2)} supply a fixed integrable majorant for their densities, and the Dunford--Pettis theorem delivers a limit whose pure part vanishes; Theorem \ref{t3} is then obtained in Subsection \ref{subsec:prooft3} by testing that limit against the inward direction of \textbf{(C3)}. Theorem \ref{t4} requires no approximation at all: Subsection \ref{subsec:prooft4} derives a multiplier identity valid in the presence of a charge, for which the action of $\beta$ on absolutely continuous functions must first be identified (Lemma \ref{lem:betacharge}). Subsection \ref{subsec:prooft5}, finally, combines both routes: the initial instant is handled by truncation, whereas the terminal one forces the auxiliary problems to carry charges of their own, so that the limit must be taken in $\Linf^*$.
	
	\subsection{Proof of Theorem \ref{t1}}\label{subsec:prooft1}
	
	\begin{proof}[Proof of Theorem \ref{t1}]
		We begin by following the approximation scheme of \cite{ferreira}. Choose a sequence $\{t_i\}\subset(0,1)$ with $t_i\to0^+$ and, for each $i$, consider the auxiliary problem
		\begin{equation*}
			\text{(P$_i$)}~\left\{\begin{array}{lll}
				&\text{min}\quad  J_i(x,u)=\displaystyle\int_{t_i}^1L(t,x(t),u(t))dt  \\
				& s.t.\quad (x,u)\in W^{1,1}([t_i,1];\R^n)\times L^{\infty}([t_i,1];\R^m),\\
				&\dot x(t)=f(t,x(t),u(t))\quad \text{a.e.},\\
				&g(t,x(t),u(t))\leq 0\ \quad \text{a.e.},\\
				&x(t_i)=\bar x(t_i).
			\end{array}\right.
		\end{equation*}
		Since (P$_i$) shares the constraints of (P), the restriction of $(\bar x,\bar u)$ to $[t_i,1]$ is a weak local minimizer for (P$_i$). Moreover, by hypothesis \textbf{(R)} applied with $\delta=t_i$, this restriction is regular in the sense of Definition \ref{reg}; hence, as recalled after that definition (see \cite{dm2}), the multipliers provided by Theorem \ref{pmp} (in the form of Remark \ref{rem:interval}) can be taken with vanishing pure charge. That is, there exist $\lambda_{0i}\geq 0$, an integrable function $\tilde z_i\geq 0$ and $\tilde\lambda_i\in W^{1,1}([t_i,1];\R^n)$ (so that $\tilde\zeta_i=0$ and $\tilde\beta_i\equiv 0$) satisfying:
		\begin{enumerate}
			\item the nontriviality condition
			\begin{equation}\label{NT}\lambda_{0i}+\|\tilde z_i\|_{L^1}>0;\end{equation}
			\item the complementary slackness condition
			\begin{equation}\label{cs}\tilde z_i(t)\bar g(t)=0\quad \text{a.e.;}\end{equation}
			\item the transversality condition $\tilde\lambda_i(1)=0$ and the costate equation
			\begin{equation}\label{ce}
				-\dot{\tilde{\lambda}}_i=\bar f_x^T\tilde\lambda_i+\lambda_{0i}\bar L_x^T+\bar g_{x}^T\tilde z_i\quad\text{a.e.;}
			\end{equation}
			\item the stationarity conditions
			\begin{align}
				0&=\bar{f}_{u}^T(t)\tilde\lambda_{i}(t)+\lambda_{0i}\bar L_u^T(t)+\bar g_u^T(t)\tilde z_i(t)\quad\text{a.e.,} \label{scu}\\
				0&=\bar g_u^T\,d\tilde\zeta_i, \label{scc}
			\end{align}
			the second condition holding trivially because $\tilde\zeta_i=0$.
		\end{enumerate}
		We extend the multipliers to $[0,1]$. The pure charge is trivially extended by $\zeta_i(A)=0$ for all $A\in\mathfrak{L}$, hence also $\beta_i\equiv 0$; the remaining multipliers are extended by
		\begin{equation}\label{ext}
			(\lambda_i, z_i)(t)=
			\begin{cases}
				(\tilde\lambda_{i}(t_i),\,0), & t\in [0,t_i),\\
				(\tilde\lambda_{i}(t),\, \tilde z_i(t)), & t\in [t_i,1],
			\end{cases}
		\end{equation}
		so that $(\lambda_i,\beta_i,\lambda_{0i},z_i,\zeta_i)$ belongs to the space
		\[\mc{Y}=\BV{n}\times \BV{n}\times\R_+\times L_+^1([0,1];\R)\times L^\infty_+([0,1];\R)^*.\]
		
		Now, we break down the proof in several steps:
		
		\emph{Step 1: Pointwise bound on $z_i$.} Let $v$ be as in \textbf{(C1)}. Taking in \eqref{scu} the inner product with $v(t)\in\R^m$ and using $\langle \bar f_u^T\tilde\lambda_i, v\rangle=\langle\tilde\lambda_i,\bar f_u v\rangle=0$, valid a.e. on the active set $\{\bar g=0\}$ because $\bar f_u v=0$ there, we obtain
		\[\tilde z_i(t)\,\bar g_u(t)v(t)=-\lambda_{0i}\,\bar L_u(t)v(t)\qquad\text{a.e. on }\{\bar g=0\}\cap[t_i,1].\]
		Since $\tilde z_i\geq 0$ and $\bar g_uv\geq 1$ a.e. on $\{\bar g=0\}$, taking absolute values yields, a.e. on $\{\bar g=0\}\cap[t_i,1]$,
		\[\tilde z_i(t)\ \leq\ \tilde z_i(t)\,\bar g_u(t)v(t)\ =\ \lambda_{0i}\,|\bar L_u(t)v(t)|\ \leq\ \lambda_{0i}\,\|\bar L_u\|_{L^\infty}\,|v(t)|.\]
		By the complementary slackness condition \eqref{cs}, $\tilde z_i$ vanishes a.e. off $\{\bar g=0\}$, so the preceding bound extends to a.e. $t\in[t_i,1]$; hence the extension \eqref{ext} satisfies
		\begin{equation}\label{star}
			0\ \leq\ z_i(t)\ \leq\ \lambda_{0i}\,\|\bar L_u\|_{L^\infty}\,|v(t)|\qquad\text{for a.e. }t\in[0,1].
		\end{equation}
		Integrating \eqref{star} over $[0,1]$ and using $v\in\Lone[m]$,
		\begin{equation}\label{zbound}
			\|z_i\|_{L^1}=\|\tilde z_i\|_{L^1}\ \leq\ \lambda_{0i}\,\|\bar L_u\|_{L^\infty}\,\|v\|_{L^1}=:\lambda_{0i}\,M .
		\end{equation}
		
		\emph{Step 2: Normalization and lower bound for $\lambda_{0i}$.} Because $\tilde\zeta_i=0$, the nontriviality condition \eqref{NT} reduces to $\lambda_{0i}+\|z_i\|_{L^1}>0$, so we may divide the tuple by $a_i:=\lambda_{0i}+\|z_i\|_{L^1}>0$ and assume, without relabeling,
		\[\lambda_{0i}+\|z_i\|_{L^1}=1,\qquad\text{hence in particular }\ \lambda_{0i}\leq 1 .\]
		In view of the fact that \eqref{zbound} holds for the normalized multipliers,
		\[1=\lambda_{0i}+\|z_i\|_{L^1}\leq \lambda_{0i}+\lambda_{0i}M=\lambda_{0i}(1+M),\qquad\text{that is,}\qquad \lambda_{0i}\ \geq\ \frac{1}{1+M}.\]
		Moreover, since now $\lambda_{0i}\leq 1$, the bound \eqref{star} gives the fixed integrable majorant
		\begin{equation}\label{domination}
			0\ \leq\ z_i(t)\ \leq\ h(t):=\|\bar L_u\|_{L^\infty}\,|v(t)|\qquad\text{for a.e. }t\in[0,1],\qquad h\in\Lone,
		\end{equation}
		independent of $i$.
		
		\emph{Step 3: Uniform bound on the costate.} With $\tilde\beta_i\equiv 0$, integrating \eqref{ce} from $t$ to $1$ and using $\tilde\lambda_i(1)=0$ gives
		\[\tilde\lambda_i(t)=\int_t^1\bigl(\bar f_x(s)^T\tilde\lambda_i(s)+\lambda_{0i}\bar L_x(s)^T+\bar g_x(s)^T\tilde z_i(s)\bigr)\,ds,\]
		so that, with $a:=\|\bar f_x\|_{L^\infty}$ and the normalization $\lambda_{0i}+\|\tilde z_i\|_{L^1}=1$,
		\[|\tilde\lambda_i(t)|\leq \|\bar L_x\|_{L^\infty}+\|\bar g_x\|_{L^\infty}\|\tilde z_i\|_{L^1}+a\int_t^1|\tilde\lambda_i(s)|\,ds\leq \bigl(\|\bar L_x\|_{L^\infty}+\|\bar g_x\|_{L^\infty}\bigr)+a\int_t^1|\tilde\lambda_i(s)|\,ds.\]
		By Gr\"onwall's inequality,
		\begin{equation}\label{lambdabound}
			\|\tilde\lambda_i\|_{L^\infty}\ \leq\ \bigl(\|\bar L_x\|_{L^\infty}+\|\bar g_x\|_{L^\infty}\bigr)e^{a}=:C_0,
		\end{equation}
		a bound independent of $i$.
		
		\emph{Step 4: Passage to the limit.} We now justify the existence of limits when letting $i\to\infty$. No appeal to Banach limits or subnets is needed here thanks to the regularity of the auxiliary problems and the bounds of the previous steps. We extract a single subsequence, not relabelled, along which the three nontrivial components ($\lambda_{0i}$, $z_i$, $\lambda_i$) converge as described in \emph{(a)}--\emph{(c)} below; recall that $\beta_i\equiv 0$ and $\zeta_i=0$ for every $i$.
		
		\emph{(a) The cost multiplier.} After the normalization of Step 2, $\{\lambda_{0i}\}\subset[0,1]$ is a bounded sequence of real numbers; by Bolzano--Weierstrass a subsequence converges, $\lambda_{0i}\to\lambda_0$, and passing to the limit in the bound of Step 2 gives
		\[\lambda_0\ \geq\ \frac{1}{1+M}\ >\ 0 .\]
		
		\emph{(b) The integrable multiplier, and the vanishing of the charge.} By \eqref{domination}, $0\leq z_i\leq h$ with a fixed $h\in\Lone$ independent of $i$; hence $\{z_i\}$ is uniformly integrable and bounded in $L^1$. By the Dunford--Pettis theorem it is relatively weakly compact in $\Lone$, so along a further subsequence $z_i\rightharpoonup z$ weakly in $\Lone$ for some $z\in\Lone$, with $z\geq 0$ a.e.\ because the nonnegative cone of $L^1$ is convex and norm-closed, hence weakly closed. Since the weak limit of $\{z_i\}$ already lies in $L^1$, no purely finitely additive part can be generated in the limit. Indeed, regarding $z_i$ as elements of $\Linf^*$, weak $L^1$ convergence is precisely weak$^*$ convergence to the absolutely continuous charge $z\,d\leb$, whose pure component in the Yosida--Hewitt decomposition \eqref{YH} is zero. We therefore obtain
		\[\zeta=0,\qquad\text{and hence}\qquad \beta\equiv 0\]
		by the defining formula for $\beta$ in Theorem \ref{pmp}(c); the limiting costate will accordingly be absolutely continuous.
		
		\emph{(c) The costate.} By \eqref{lambdabound} the functions $\lambda_i$ are uniformly bounded, $\|\lambda_i\|_{L^\infty}\leq C_0$; being dominated by a constant they are uniformly integrable, so the Dunford--Pettis theorem again yields a weakly-$\Lone$ convergent subsequence. A costate in Theorem \ref{pmp} must, however, be a function of \emph{bounded variation}, and weak $L^1$ convergence alone does not deliver such a limit; we require, in addition, a uniform bound on the variation. Since $\lambda_i$ is constant on $[0,t_i)$ and, on $[t_i,1]$, solves \eqref{ce} with $\tilde\beta_i\equiv 0$, we have for a.e.\ $t$
		\[|\dot\lambda_i(t)|\ \leq\ \|\bar f_x\|_{L^\infty}\,|\lambda_i(t)|+\lambda_{0i}\,|\bar L_x(t)|+|\bar g_x(t)|\,z_i(t),\]
		so that, using \eqref{lambdabound}, $\lambda_{0i}\leq 1$ and $\|z_i\|_{L^1}\leq 1$,
		\[\operatorname{Var}_{[0,1]}(\lambda_i)=\int_0^1|\dot\lambda_i(t)|\,dt\ \leq\ a\,C_0+\|\bar L_x\|_{L^\infty}+\|\bar g_x\|_{L^\infty}\ =:\ V_0,\]
		a bound independent of $i$. Together with $\|\lambda_i\|_{L^\infty}\leq C_0$ this makes $\{\lambda_i\}$ a bounded sequence in $\BV{n}$. By Helly's selection theorem a further subsequence converges pointwise on $[0,1]$ to some $\lambda\in\BV{n}$, with $\operatorname{Var}_{[0,1]}(\lambda)\leq V_0$ by lower semicontinuity of the variation. Since $|\lambda_i|\leq C_0$, dominated convergence upgrades this to convergence $\lambda_i\to\lambda$ in $\Lone$; in particular the weak-$\Lone$ limit of $\{\lambda_i\}$ is this same $\lambda$, now identified as a function of bounded variation; precisely the regularity required of a costate in Theorem \ref{pmp}.
		
		\emph{Step 5: The limit tuple is a multiplier.} It remains to verify that $(\lambda,\beta,\lambda_0,z,\zeta)=(\lambda,0,\lambda_0,z,0)\in\mc Y$ satisfies conditions (a)--(d) of Theorem \ref{pmp}. 
		
		\emph{(a) Costate equation.} Fix $t\in(0,1]$; for every $i$ with $t_i<t$ the integrated costate equation of Step 3 reads
		\[\lambda_i(t)=\int_t^1\bigl(\bar f_x(s)^T\lambda_i(s)+\lambda_{0i}\bar L_x(s)^T+\bar g_x(s)^Tz_i(s)\bigr)\,ds.\]
		Letting $i\to\infty$ we pass to the limit termwise: $\lambda_i(t)\to\lambda(t)$; $\int_t^1\bar f_x^T\lambda_i\to\int_t^1\bar f_x^T\lambda$ since $\lambda_i\to\lambda$ in $\Lone$ and $\bar f_x\in\Linf$; $\lambda_{0i}\int_t^1\bar L_x^T\to\lambda_0\int_t^1\bar L_x^T$; and $\int_t^1\bar g_x z_i\to\int_t^1\bar g_x z$ because $z_i\rightharpoonup z$ weakly in $\Lone$ and $\bar g_x\,\chi_{[t,1]}\in\Linf$. Hence
		\[\lambda(t)=\int_t^1\bigl(\bar f_x(s)^T\lambda(s)+\lambda_0\bar L_x(s)^T+\bar g_x(s)^Tz(s)\bigr)\,ds,\qquad t\in(0,1].\]
		The value at $t=0$ follows by continuity. Thus $\lambda\in\AC{n}$, it satisfies the transversality condition $\lambda(1)=0$ and, upon differentiation, the costate equation in (c) with $d\beta=0$. 
		
		\emph{(b) Stationarity.} Equation \eqref{scu} holds a.e. on $[t_i,1]$, while on $[0,t_i)$ the extension \eqref{ext} gives $z_i=0$ and $\lambda_i\equiv\tilde\lambda_i(t_i)$; hence, for every $\phi\in\Linf[m]$ and every $i$,
		\[\int_0^1\left(\lambda_{i}^T(t)\bar f_u(t)+\lambda_{0i}\bar L_u(t)+z_{i}(t)\bar g_u(t)\right)\cdot \phi(t)\,dt=\int_0^{t_i}\left(\lambda_{i}^T(t)\bar f_u(t)+\lambda_{0i}\bar L_u(t)\right)\cdot \phi(t)\,dt,\]
		and, by \eqref{lambdabound} and $\lambda_{0i}\leq 1$, the right-hand side is bounded in absolute value by
		\[\bigl(C_0\,\|\bar f_u\|_{L^\infty}+\|\bar L_u\|_{L^\infty}\bigr)\,\|\phi\|_{L^\infty}\,t_i\ \longrightarrow\ 0.\]
		Therefore, fixing arbitrary $\phi\in\Linf[m]$, letting $i\to\infty$ and using that $\lambda_i\to\lambda$ in $\Lone$, $\lambda_{0i}\to\lambda_0$ and $z_i\rightharpoonup z$ weakly in $\Lone$, yields
		\[\int_0^1\left(\lambda^T(t)\bar f_u(t)+\lambda_{0}\bar L_u(t)+z(t)\bar g_u(t)\right)\cdot \phi(t)\,dt=0,\]
		and since $\phi$ is arbitrary, the first stationarity condition in (d) holds for $(\lambda, \lambda_0, z)$ a.e.; the second, $\int_{[0,1]}\bar g_u\,d\zeta=0$, is trivial since $\zeta=0$. 
		
		\emph{(c) Complementary slackness.} Each $z_i$ vanishes off $\{\bar g=0\}$, so $\int_E z_i=0$ for every measurable $E\subseteq\{\bar g\neq 0\}$, hence $\int_E z=0$; thus $z=0$ a.e.\ on $\{\bar g\neq 0\}$, i.e.\ $z(t)\bar g(t)=0$ a.e., while $\bar g=0$ holds $\zeta$-a.e.\ trivially. 
		
		\emph{(d) Nontriviality.} It is a direct consequence of the inequality $\lambda_0\geq 1/(1+M)>0$.
		
		Consequently $(\lambda_0,\lambda,0,z,0)$ is a tuple of multipliers for $(P)$ with $\zeta=0$, $\beta\equiv 0$ and $\lambda_0\geq 1/(1+M)>0$. Since $\lambda_0>0$, dividing the whole tuple by $\lambda_0$ normalizes the cost multiplier to $1$.
	\end{proof}
	
	\subsection{Proof of Theorem \ref{t2}}\label{subsec:prooft2}
	
	\begin{proof}[Proof of Theorem \ref{t2}]
		Take a sequence of times $\{t_i\}\subset(0,1]$ with $t_i\to0^+$ and the associated auxiliary problems $(P_i)$ exactly as in the proof of Theorem \ref{t1}.
		
		First, we claim that, under \textbf{(C2)}, the restriction of $(\bar x,\bar u)$ to $[t_i,1]$ is a regular minimizer for $(P_i)$ in the sense of Definition \ref{reg}. Indeed, set
		\[v_{0i}(t)=\begin{cases}\dfrac{\bar g_u(t)^T}{|\bar g_u(t)|^2}, & t\in A_\varepsilon\cap[t_i,1],\\[1ex] 0, & \text{otherwise.}\end{cases}\]
		Since $\omega$ is non-decreasing, on $A_\varepsilon\cap[t_i,1]$ we have $|\bar g_u(t)|\geq\omega(t)\geq\omega(t_i)>0$, so that $|v_{0i}(t)|\leq 1/\omega(t_i)$; hence $v_{0i}\in L^\infty([t_i,1];\R^m)$ and $\bar g_u(t)v_{0i}(t)=1$ for a.e. $t\in A_\varepsilon\cap[t_i,1]$. Thus $(P_i)$ is regular, and as recalled after Definition \ref{reg} (see \cite{dm2}), the multipliers $(\tilde\lambda_i, \tilde\beta_i, \lambda_{0i}, \tilde z_i, \tilde\zeta_i)$ of Theorem \ref{pmp} can be taken with vanishing pure charge $\tilde\zeta_i=0$, hence $\tilde\beta_i\equiv 0$ and $\tilde\lambda_i\in W^{1,1}([t_i,1];\R^n)$, satisfying conditions \eqref{NT}--\eqref{scc}.
		
		Now, extend each tuple to $(\lambda_i, \beta_i, \lambda_{0i}, z_i, \zeta_i)$ defined on $[0,1]$ as in the proof of Theorem \ref{t1} so that $\zeta_i=0$, $\beta_i\equiv 0$ on $[0, 1]$ and $z_i(t)=0$ and $\lambda_i(t)=\tilde\lambda_i(t_i)$ on $[0,t_i)$. Since $\zeta_i=0$, the nontriviality condition \eqref{NT} reduces to $\lambda_{0i}+\|\tilde z_i\|_{L^1}>0$, and we normalize the multipliers so that
		\[\lambda_{0i}+\|z_i\|_{L^1}=1\qquad\text{for every }i;\]
		in particular $\lambda_{0i}\le 1$. Proceeding exactly as in the proof of Theorem \ref{t1}, \eqref{lambdabound} holds in the present situation as well; that is,
		\[\|\tilde\lambda_i\|_{L^\infty}\leq \big(\|\bar L_x\|_{L^\infty}+\|\bar g_x\|_{L^\infty}\big)e^{a}=:C_0,\]
		where $a=\|\bar f_x\|_{L^\infty}$, a bound independent of $i$. 
		
		On the other hand, by complementary slackness \eqref{cs}, $\tilde z_i$ is supported on $\{\bar g=0\}\subseteq A_\varepsilon$. On this set $\bar g_u\neq 0$ by assumption \textbf{(C2)}, so we may solve the stationarity condition \eqref{scu} for $\tilde z_i$ by taking the inner product with $\bar g_u^T$:
		\[\tilde z_i(t)=-\frac{\big(\tilde\lambda_i(t)^T\bar f_u(t)+\lambda_{0i}\bar L_u(t)\big)\bar g_u(t)^T}{|\bar g_u(t)|^2}.\]
		Thus, on account of the uniform bound of the costate and the inequality $\lambda_{0i}\le 1$,
		\[0\le \tilde z_i(t)\le \frac{|\tilde\lambda_i(t)|\,|\bar f_u(t)|+\lambda_{0i}|\bar L_u(t)|}{|\bar g_u(t)|}\le\frac{K}{|\bar g_u(t)|}\le\frac{K}{\omega(t)},\qquad K:=C_0\|\bar f_u\|_{L^\infty}+\|\bar L_u\|_{L^\infty},\]
		for a.e. $t\in\{\bar g=0\}$. Therefore the extensions satisfy
		\[0\le z_i(t)\le h(t):=\frac{K}{\omega(t)}\,\chi_{\{\bar g=0\}}(t)\qquad\text{for a.e. }t\in(0,1],\]
		and $h\in \Lone$ because $\int_0^1\omega^{-1}<\infty$. Crucially, this bound is uniform in $i$.
		
		The domination of $z_i$ above places us exactly in the situation of \eqref{domination} in the proof of Theorem \ref{t1}, with $h=K\omega^{-1}\chi_{\{\bar g=0\}}$ in place of $\|\bar L_u\|_{L^\infty}|v|$, and the arguments of Steps 4 and 5 of that proof apply verbatim, with a single exception: the lower bound on the cost multiplier, which we address below. Along a subsequence (not relabelled): $\lambda_{0i}\to\lambda_0\in[0,1]$ by Bolzano--Weierstrass; by the Dunford--Pettis theorem, $z_i\rightharpoonup z$ weakly in $\Lone$ with $z\geq 0$ a.e., so that the weak$^*$ limit of $\{z_i\}$ in $\Linf^*$ is the absolutely continuous charge $z\,d\leb$, whose pure component in the Yosida--Hewitt decomposition \eqref{YH} vanishes: $\zeta=0$ and hence $\beta\equiv 0$. By the uniform bounds on $\|\lambda_i\|_{L^\infty}$ and on $\operatorname{Var}_{[0,1]}(\lambda_i)$ together with Helly's selection theorem, $\lambda_i\to\lambda$ strongly in $\Lone[n]$ with $\lambda\in\BV{n}$. The verification that the limit tuple $(\lambda_0,\lambda,0,z,0)$ satisfies the costate equation, the transversality, stationarity and complementary slackness conditions of Theorem \ref{pmp} is identical to Step 5\,\emph{(a)}--\emph{(c)} of the proof of Theorem \ref{t1} and is not repeated.
		
		Finally, the nontriviality and nondegeneracy are direct consequences of the fact that $\zeta=0$: testing $z_i\rightharpoonup z$ against $\mathbf 1\in\Linf$ and using the normalization, we obtain
		\[\|z\|_{L^1}=\int_0^1 z(t)\,dt=\lim_i\int_0^1 z_i(t)\,dt=\lim_i\|z_i\|_{L^1}=1-\lambda_0,\]
		implying that $\|z\|_{L^1}$ and $\lambda_0$ cannot vanish simultaneously and that
		\[\lambda_0+\|z\|_{L^1}+\zeta([0,1])=1>0.\]
		Since degenerate tuples have $\lambda_0=0$ and $z\equiv 0$, the limit multipliers are nondegenerate.
	\end{proof}
	
	\subsection{Proof of Theorem \ref{t3}}\label{subsec:prooft3}
	
	\begin{proof}[Proof of Theorem \ref{t3}]
		Assume \textbf{(C2)} and let $(\lambda_0,\lambda,0,z,0)$ be the tuple constructed in the proof of Theorem \ref{t2}. Besides the conditions of Theorem \ref{pmp} with $\zeta=0$ and $\beta\equiv 0$, it satisfies the mass identity
		\[\lambda_0+\|z\|_{L^1}=1\]
		and its costate $\lambda$ is absolutely continuous with $\lambda(1)=0$.
		
		Let $(y,v)$ be as in \textbf{(C3)} and set $w:=\bar g_xy+\bar g_uv\in\Linf$. Since $\lambda$ and $y$ are absolutely continuous with $\lambda(1)=0$ and $y(0)=0$,
		\[0=\lambda(1)^Ty(1)-\lambda(0)^Ty(0)=\int_0^1\bigl(\dot\lambda(t)^Ty(t)+\lambda(t)^T\dot y(t)\bigr)\,dt.\]
		Omitting the $t$-dependence, the costate equation (with $d\beta=0$) gives $\dot\lambda^Ty=-\lambda^T\bar f_xy-\lambda_0\bar L_xy-z\,\bar g_xy$ a.e.; the variational equation of \textbf{(C3)} gives $\lambda^T\dot y=\lambda^T\bar f_xy+\lambda^T\bar f_uv$ a.e.; and the first stationarity condition of Theorem \ref{pmp}(d) gives $\lambda^T\bar f_uv=-\lambda_0\bar L_uv-z\,\bar g_uv$ a.e. Adding the three equations and integrating over $[0,1]$ yields the identity
		\begin{equation}\label{fundid}
			\lambda_0\int_0^1\bigl(\bar L_x(t)y(t)+\bar L_u(t)v(t)\bigr)\,dt+\int_0^1 z(t)\,w(t)\,dt=0.
		\end{equation}
		Suppose, by contradiction, that $\lambda_0=0$. Then \eqref{fundid} reduces to $\int_0^1 z\,w\,dt=0$. By the complementary slackness condition, $z$ vanishes a.e.\ off the active set $\{\bar g=0\}$, while on it $z\geq 0$ and $w<0$ a.e.\ by \textbf{(C3)}; hence $z\,w\leq 0$ a.e., and the vanishing of its integral forces $z\,w=0$ a.e., hence $z=0$ a.e. on the active set and therefore on all of $[0,1]$. This contradicts the nontriviality $\lambda_0+\|z\|_{L^1}=1$. Consequently $\lambda_0>0$.
	\end{proof}
	
	\subsection{Proof of Theorem \ref{t4}}\label{subsec:prooft4}
	
	We begin with a technical lemma:
	
	\begin{lemma}\label{lem:betacharge}
		Let $\zeta\geq 0$ be a charge in $\ba$ and let $\beta$ be given by the formula in Theorem \ref{pmp}(c), together with $\beta(1)=0$. Then $\beta$ coincides a.e. on $[0,1]$ with the function $t\mapsto-\int_{(t,1]}\bar g_x(s)^T\,d\zeta$ and, for every $y\in\AC{n}$ with $y(0)=0$,
		\begin{equation}\label{betaid}
			\int_0^1\beta(t)^T\dot y(t)\,dt\;=\;-\int_{[0,1]}\bar g_x(t)\,y(t)\,d\zeta .
		\end{equation}
	\end{lemma}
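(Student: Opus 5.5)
We split the proof into two parts: first identify $\beta$ pointwise off a countable set, then obtain \eqref{betaid} by a Fubini-type interchange, justified through simple-function approximation of $\bar g_x$.

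\emph{Step 1: identification of $\beta$.} Set $B(t):=-\int_{(t,1]}\bar g_x(s)^T\,d\zeta$ for $t\in[0,1)$ and $B(1)=0$. For $t\in(0,1)$ the formula of Theorem \ref{pmp}(c) gives
\[\beta(t)=-\lim_k\int_{(t+1/k,1]}\bar g_x^T\,d\zeta.\]
By finite additivity,
\[\int_{(t,1]}\bar g_x^T\,d\zeta-\int_{(t+1/k,1]}\bar g_x^T\,d\zeta=\int_{(t,t+1/k]}\bar g_x^T\,d\zeta,\]
which is bounded in norm by $\|\bar g_x\|_{L^\infty}\,\zeta((t,t+1/k])$. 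The function $t\mapsto\zeta((t,1])$ is nonincreasing and bounded, so it has at most countably many discontinuities. At every continuity point $t$ we have $\zeta((t,t+1/k])=\zeta((t,1])-\zeta((t+1/k,1])\to0$. Hence $\beta=B$ off a countable set, and in particular a.e. The same estimate shows that each component of $B$ has bounded variation, dominated by the total variation of the monotone function $t\mapsto\|\bar g_x\|_{L^\infty}\zeta((t,1])$.

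\emph{Step 2: the identity.} Since $y\in\AC{n}$ with $y(0)=0$, we have $y(t)=\int_0^t\dot y$ and $\int_0^1 B^T\dot y\,dt=-\int_0^1\int_{(t,1]}\bar g_x(s)\,d\zeta(s)\,\dot y(t)\,dt$. We first treat the case where $\bar g_x$ is replaced by a simple function $\sum_j c_j\chi_{E_j}$, $E_j\in\mathfrak L$. The right-hand side of \eqref{betaid} is then $-\sum_j c_j\int_{E_j}y\,d\zeta$, and the left-hand side is $-\sum_j c_j\int_0^1\zeta(E_j\cap(t,1])\,\dot y(t)\,dt$. So it suffices to prove the scalar identity
\[\int_0^1\zeta(E\cap(t,1])\,\dot y_\ell(t)\,dt=\int_E y_\ell\,d\zeta\]
for every component $y_\ell$ and every measurable $E$. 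For this we approximate $y_\ell$ uniformly by step functions $\sum_r y_\ell(s_{r})\chi_{(s_{r-1},s_r]}$ on fine partitions; this is possible because $y_\ell$ is continuous. For such a step function the left side becomes a Riemann–Stieltjes sum, and summation by parts gives exactly $\sum_r\bigl(y_\ell(s_r)-y_\ell(s_{r-1})\bigr)\zeta(E\cap(s_r,1])$ up to an error controlled by the oscillation of $y_\ell$ times $\zeta([0,1])$. Since both sides are continuous under uniform convergence of $y$ (with $\zeta$ bounded), the identity passes to the limit. The general case then follows by approximating $\bar g_x$ uniformly a.e. by simple functions: both sides are continuous in $\bar g_x$ for the $L^\infty$ norm, with constants $\zeta([0,1])\,\|\dot y\|_{L^1}$ and $\zeta([0,1])\,\|y\|_{C}$. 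Weak absolute continuity of $\zeta$ guarantees that the a.e.-modifications are harmless. Finally, we replace $B$ by $\beta$ using Step 1.

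\emph{Main obstacle.} The hardest step is the interchange of integration in Step 2, since Fubini is unavailable for the purely finitely additive $\zeta$. The plan above sidesteps it entirely via the discretization/summation-by-parts argument. The point to be checked carefully is the boundary term at $t=0$: it vanishes precisely because $y(0)=0$. Note that any mass of $\zeta$ near $0$, including mass right-concentrated at the origin, is still captured correctly by the sets $(t,1]$ with $t>0$ small.
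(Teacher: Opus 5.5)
Your proposal is correct and follows essentially the paper's route: $\beta$ is identified with $-\int_{(t,1]}\bar g_x^T\,d\zeta$ off the countable discontinuity set of a monotone function, and the identity comes from matching Riemann--Stieltjes sums on fine partitions with charge integrals of step functions converging uniformly to $y$. The differences are minor: you do the summation by parts by hand, with boundary terms killed by $y(0)=0$ and by $\zeta(E\cap(t,1])$ vanishing at $t=1$, where the paper cites the integration-by-parts formula for BV functions; and your reduction to simple $\bar g_x$ is unnecessary, since one can work directly with the vector charge $E\mapsto\int_E\bar g_x^T\,d\zeta$.

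One step should be phrased more carefully: the passage to the left side, which is not defined for step functions, so ``continuity under uniform convergence of $y$'' does not apply to it. Instead, set $\phi(t)=\zeta(E\cap(t,1])$ and $\phi_N:=\phi(s_r)$ on $(s_{r-1},s_r]$. Then $\sum_r\phi(s_r)\bigl(y_\ell(s_r)-y_\ell(s_{r-1})\bigr)=\int_0^1\phi_N\,\dot y_\ell\,dt\to\int_0^1\phi\,\dot y_\ell\,dt$ by dominated convergence, because $\phi_N\to\phi$ off the countable discontinuity set of the monotone $\phi$ and $|\phi_N|\le\zeta([0,1])$.
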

	
	\begin{proof}
		Write $\gamma(E)\coloneqq\int_E\bar g_x(s)^T\,d\zeta\in\R^n$ for $E\in\mathfrak{L}$, and set
		\[
		\tilde\beta(t)\coloneqq-\gamma\bigl((t,1]\bigr),\qquad t\in[0,1].
		\]
		Splitting each component of $\bar g_x$ into its positive and negative parts exhibits each component of $\tilde\beta$ as a difference of two bounded non-decreasing functions; hence $\tilde\beta\in\BV{n}$, it possesses one-sided limits at every point, and its set $D$ of discontinuity points is at most countable. Since $\zeta$ vanishes on Lebesgue null sets, $\zeta(\{s\})=0$ for every $s\in[0,1]$, so the integrals occurring in the formula of Theorem \ref{pmp}(c) are unambiguous and, for $t\in(0,1)$,
		\[
		\beta(t)=-\lim_{k\to\infty}\gamma\bigl((t+\tfrac1k,1]\bigr)=\lim_{k\to\infty}\tilde\beta\bigl(t+\tfrac1k\bigr)=\tilde\beta(t^+),
		\]
		while $\beta(0)=-\gamma([0,1])=-\gamma\bigl((0,1]\bigr)=\tilde\beta(0)$ and $\beta(1)=0=\tilde\beta(1)$. Therefore $\beta$ and $\tilde\beta$ agree on the set $G\coloneqq\{0,1\}\cup\bigl([0,1]\setminus D\bigr)$, whose complement is countable; in particular they agree a.e., which is the first assertion of the lemma. Consequently neither side of \eqref{betaid} changes if $\beta$ is replaced by $\tilde\beta$, so we assume from now on that $\beta=\tilde\beta$.
		
		The integration by parts formula for functions of bounded variation applied to $\beta$ and $y$ reads
		\[
		\beta(1)^Ty(1)-\beta(0)^Ty(0)=\int_{(0,1]}\beta(t)^T\,dy+\int_{(0,1]}y(t-)^T\,d\beta,
		\]
		both Stieltjes integrals being well defined because $y$ is continuous and $\beta$ is of bounded variation. Using the facts that $\beta(1)=y(0)=0$ and that $y$ is absolutely continuous, the above expression reduces to
		\[
		\int_{0}^1\beta(t)^T\dot y(t)\,dt=-\int_{(0,1]}y(t)^T\,d\beta,
		\]
		thus we prove the lemma by showing the validity of the equation
		\begin{equation}\label{eq:ybeta}
			\int_{(0,1]}y(t)^T\,d\beta(t)=\int_0^1\bar g_x(t)y(t)\,d\zeta.
		\end{equation}
		
		Take a partition $0=t_0<t_1<\cdots<t_{N-1}<t_N=1$ of $[0,1]$ whose points all lie in $G$ (which is possible because $[0,1]\setminus G$ is countable) and arbitrary elements $c_k\in(t_{k-1}, t_k)$ for $k=1,\ldots,N$. The finite additivity of $\gamma$ gives
		\[
		\beta(t_k)-\beta(t_{k-1})=\gamma\bigl((t_{k-1},t_k]\bigr)=\int_{t_{k-1}}^{t_k}\bar g_x(t)^T\,d\zeta,\qquad k=1,\ldots,N,
		\]
		so that the Riemann--Stieltjes sums associated with the integral on the left hand-side of \eqref{eq:ybeta} are
		\[
		\sum_{k=1}^{N}y(c_k)^T\left[\beta(t_{k})-\beta(t_{k-1})\right] = \sum_{k=1}^{N}\int_{t_{k-1}}^{t_k}\bar g_x(t)y(c_k)\,d\zeta .
		\] 
		Since $y$ is continuous and $\beta$ is of bounded variation, that integral is the limit of these sums, along \emph{every} choice of partitions and of intermediate points, when the ``norm'' of the partitions, defined by $\max\left\{ t_k-t_{k-1} \right\}$, tends to zero. Select then a sequence of partitions with points in $G$ and elements 
		\[
		t_0^j<t_1^j<\cdots<t_{N_j}^j\qquad\text{and}\qquad c_k^j\in(t_{k-1}^j, t_k^j) \text{ for } k=1,\ldots,N_j 
		\]
		with $\max\left\{ t^j_{k}-t^j_{k-1} \right\}\to 0$ as $j\to\infty$, so that
		\[
		\lim_{j\to\infty}\sum_{k=1}^{N_j}\int_{t_{k-1}^j}^{t_k^j}\bar g_x(t)y(c_k^j)\,d\zeta=\int_{(0,1]}y(t)^T\,d\beta(t).
		\]
		Observe that, by defining $\psi_j(t)$ as the step function $\psi_j(t):=\sum_{k=1}^{N_j}y(c_k^j)\chi_{(t_{k-1}^j, t_k^j]}(t)$, we have that $\psi_j\to y$ in the norm of $L^\infty$ on account of the uniform continuity of $y$ in $[0,1]$ and the norm of the partitions tending to zero. Furthermore,
		\[
		\lim_{j\to\infty}\sum_{k=1}^{N_j}\int_{t_{k-1}^j}^{t_k^j}\bar g_x(t)y(c_k^j)\,d\zeta=\lim_{j\to\infty}\int_0^1\bar g_x(t)\psi_j(t)\,d\zeta=\int_0^1\bar g_x(t)y(t)\,d\zeta,
		\]
		the first equality by the definition of the integral of the charge $\bar g_xd\zeta$ with respect to the simple function $\psi_j$, and the second being precisely the definition of the integral of $y$ with respect to the charge $\bar g_x^T\,d\zeta$, completing the proof of the lemma.
	\end{proof}
	
	Now we are ready to prove Theorem \ref{t4}:
	
	\begin{proof}[Proof of Theorem \ref{t4}]
		Let $(\varepsilon,\delta,y,v)$ be as in \textbf{(C4)}, set $w:=\bar g_xy+\bar g_uv\in\Linf$, and fix any tuple $(\lambda,\beta,\lambda_0,z,\zeta)$ satisfying the conditions of Theorem \ref{pmp}. We break down the proof in several steps:
		
		\emph{Step 1: Absolutely continuous reduction of the costate.} In the costate equation of Theorem \ref{pmp}(c) the multipliers $\lambda$ and $\beta$ enter only through their sum: $-d(\lambda+\beta)=\bigl[\bar f_x^T\lambda+\lambda_0\bar L_x^T+\bar g_x^Tz\bigr]dt$. Integrating from $t$ to $1$ and using the transversality condition $\lambda(1)=\beta(1)=0$, we conclude that $\lambda+\beta$ coincides a.e. on $[0,1]$ with the absolutely continuous function
		\[p(t):=\int_t^1\bigl(\bar f_x(s)^T\lambda(s)+\lambda_0\bar L_x(s)^T+\bar g_x(s)^Tz(s)\bigr)\,ds,\]
		which satisfies $p(1)=0$ and $-\dot p=\bar f_x^T\lambda+\lambda_0\bar L_x^T+\bar g_x^Tz$ a.e.; in particular $\lambda=p-\beta$ a.e.
		
		\emph{Step 2: Multiplier identity \eqref{fundid} in the presence of a charge.} Since $p$ and $y$ are absolutely continuous with $p(1)=0$ and $y(0)=0$,
		\[0=p(1)^Ty(1)-p(0)^Ty(0)=\int_0^1\bigl(\dot p(t)^Ty(t)+p(t)^T\dot y(t)\bigr)\,dt.\]
		Omitting the $t$-dependence, the variational equation in \textbf{(C4)} and $\lambda=p-\beta$ a.e. give
		\[p^T\dot y=(p-\beta)^T\bigl(\bar f_xy+\bar f_uv\bigr)+\beta^T\dot y=\lambda^T\bar f_xy+\lambda^T\bar f_uv+\beta^T\dot y\quad\text{a.e.},\]
		while Step 1 gives $\dot p^Ty=-\lambda^T\bar f_xy-\lambda_0\bar L_xy-z\,\bar g_xy$ a.e. By the first stationarity condition of Theorem \ref{pmp}(d), $\lambda^T\bar f_uv=-\lambda_0\bar L_uv-z\,\bar g_uv$ a.e.; combining the three previous equations yields
		\[\dot p^Ty+p^T\dot y=-\lambda_0\bigl(\bar L_xy+\bar L_uv\bigr)-z\,w+\beta^T\dot y\quad\text{a.e.}\]
		We integrate over $[0,1]$ and evaluate $\int_0^1\beta^T\dot y\,dt=-\int_{[0,1]}\bar g_xy\,d\zeta$ by Lemma \ref{lem:betacharge} to obtain
		\begin{equation}\label{fundidch}
			\lambda_0\int_0^1\bigl(\bar L_x(t)y(t)+\bar L_u(t)v(t)\bigr)\,dt+\int_0^1z(t)\,w(t)\,dt+\int_0^1w(t)\,d\zeta=0.
		\end{equation}
		Notice that in the last integral of \eqref{fundidch}, we are using the second stationarity condition in Theorem \ref{pmp}(d), namely, that $\int_E\bar g_u(t)\,d\zeta=0$ for all $E\in\mathfrak{L}$, to ensure the equality 
		\[
		\int_0^1\bar g_x(t)y(t)\,d\zeta=\int_0^1\left( \bar g_x(t)y(t)+\bar g_u(t)v(t) \right)\,d\zeta=\int_0^1w(t)\,d\zeta.
		\]
		
		\emph{Step 3: Localization on the near-active set.} By complementary slackness, $z\bar g=0$ a.e., so $z$ vanishes a.e. off $\{\bar g=0\}\subseteq A_\varepsilon$; since $w\leq-\delta$ a.e. on $A_\varepsilon$ and $z\geq 0$, it follows that $z\,w\leq-\delta\,z$ a.e., hence
		\[\int_0^1z(t)\,w(t)\,dt\leq-\delta\,\|z\|_{L^1}.\]
		As for the charge, $A_\varepsilon^c=\{\bar g<-\varepsilon\}\subseteq\{|\bar g|>\varepsilon/2\}$, and the condition $\bar g=0$ $\zeta$-a.e. gives $\zeta(A_\varepsilon^c)\leq\zeta(\{|\bar g|>\varepsilon/2\})=0$. Let $N\subseteq A_\varepsilon$ be the Lebesgue null set where $w>-\delta$; since $\zeta$ vanishes on Lebesgue null sets, $\zeta(N)=0$, so that $\zeta(A_\varepsilon\setminus N)=\zeta(A_\varepsilon)=\zeta([0,1])$ and
		\[\int_{[0,1]}w\,d\zeta=\int_{A_\varepsilon\setminus N}w\,d\zeta\leq-\delta\,\zeta(A_\varepsilon\setminus N)=-\delta\,\zeta([0,1]).\]
		
		\emph{Step 4: Normality and the mass bound.} Substituting the two estimates of Step 3 into \eqref{fundidch},
		\[\delta\bigl(\|z\|_{L^1}+\zeta([0,1])\bigr)\leq\lambda_0\int_0^1\bigl(\bar L_xy+\bar L_uv\bigr)\,dt\leq\lambda_0\int_0^1\bigl|\bar L_xy+\bar L_uv\bigr|\,dt,\]
		which is \eqref{massbound}. If $\lambda_0=0$, this forces $z=0$ a.e. and, by the positivity of $\zeta$, $\zeta=0$, so that $\lambda_0+\|z\|_{L^1}+\zeta([0,1])=0$, contradicting the nontriviality condition of Theorem \ref{pmp}(a). Hence $\lambda_0>0$, thereby proving the theorem.
	\end{proof}
	
	\subsection{Proof of Theorem \ref{t5}}\label{subsec:prooft5}
	
	The initial instant is treated by truncation, as in Theorems \ref{t1}--\ref{t2}; the terminal instant, being itself nonregular, forces the auxiliary problems to carry pure charges, so that the limit must be taken in $\Linf^*$ rather than in $\Lone$. We use Banach limits to handle the limits on the charge part and the additional charge that may appear at the final point $t=1$ due to the mass drift of the $z_i$. We will require the following two technical lemmas.
	
	\begin{lemma}\label{lem:blimcommute}
		Let $\blim$ be a Banach limit and let $(\omega_i)\subset\ba$ be positive charges with $M:=\sup_i\omega_i([0,1])<\infty$. Then, $\omega(E):=\blim\omega_i(E)$, $E\in\mathfrak L$, defines a positive charge $\omega\in\ba$ with $\omega([0,1])\le M$ and for every $f\in\Linf$,
		\begin{equation}\label{blimcommute}
			\int_0^1 f\,d\omega=\blim\int_0^1 f\,d\omega_i.
		\end{equation}
	\end{lemma}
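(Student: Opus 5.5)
The plan is to verify the three claims in order: that $\omega$ is a positive element of $\ba$, that its total mass is at most $M$, and then the identity \eqref{blimcommute}. Throughout, the only tools used are the axioms of Definition \ref{def:banachlimit} and the construction of the charge integral by uniform approximation with simple functions.

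First, for each $E\in\mathfrak L$ the sequence $(\omega_i(E))$ lies in $[0,M]$, since $\omega_i$ is positive and $\omega_i(E)\le\omega_i([0,1])\le M$. Hence it belongs to $\ell^\infty(\R)$, and $\omega(E)$ is well defined. The charge properties then follow directly from the axioms:
\begin{itemize}
\item finite additivity of $\omega$, and $\omega(\emptyset)=0$, come from the linearity of $\blim$ together with the finite additivity of each $\omega_i$;
\item positivity, $\omega(E)\ge 0$, and the bound $\omega(E)\le M$ come from positivity of $\blim$ and from property (iv), $\liminf\le\blim\le\limsup$;
\item weak absolute continuity holds because $\leb(E)=0$ forces $\omega_i(E)=0$ for every $i$, hence $\omega(E)=0$.
\end{itemize}
This places $\omega$ in $\ba$ with $\omega([0,1])\le M$. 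This part mirrors the argument already given in Proposition \ref{prop:limits}(iv).

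Second, I establish \eqref{blimcommute} for simple functions $f=\sum_{k=1}^N a_k\chi_{E_k}$. Both sides equal $\sum_k a_k\,\omega(E_k)$: the left side by definition of the charge integral, and the right side by linearity of $\blim$.

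For a general $f\in\Linf$, I choose simple functions $\psi_n$ with $\|f-\psi_n\|_{L^\infty}\to0$, where the convergence is uniform off a Lebesgue null set. Because all the charges vanish on null sets, for any positive $\nu\in\ba$ one has
\[
\Bigl|\int f\,d\nu-\int\psi_n\,d\nu\Bigr|\le\|f-\psi_n\|_{L^\infty}\,\nu([0,1]).
\]
Applying this to each $\omega_i$ gives a bound $M\|f-\psi_n\|_{L^\infty}$ that is uniform in $i$. Applying it to $\omega$ gives the same bound, since $\omega([0,1])\le M$.

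Then I use property (v) of the Banach limit, $\|\blim\|=1$, to obtain
\[
\Bigl|\blim\int f\,d\omega_i-\blim\int\psi_n\,d\omega_i\Bigr|\le M\|f-\psi_n\|_{L^\infty}.
\]
Combining this with the simple-function case and letting $n\to\infty$ yields \eqref{blimcommute}. The sequence $\bigl(\int f\,d\omega_i\bigr)$ is itself bounded by $M\|f\|_{L^\infty}$, so the right side of \eqref{blimcommute} makes sense.

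The only delicate point is the uniform-in-$i$ error estimate. It hinges on two facts: the integral being well defined modulo Lebesgue null sets, which holds because each $\omega_i\in\ba$, and the uniform mass bound $M$. Once these are in place, the interchange of the Banach limit and the integral reduces to a standard $\varepsilon/3$ argument.
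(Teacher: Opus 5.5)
Your proposal is correct and follows essentially the same argument as the paper. You obtain the charge properties from linearity and positivity of the Banach limit, settle the simple-function case by linearity, and reach general $f$ through the uniform-in-$i$ estimate $M\|f-\psi\|_{L^\infty}$ combined with $\|\blim\|=1$. The only minor difference is that you get $\omega([0,1])\le M$ directly from positivity and property (iv), whereas the paper bounds the total variation by choosing signs over finite partitions, which is unnecessary once $\omega\geq 0$ is known.
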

	
	\begin{proof}
		Finite additivity follows from the linearity of $\blim$, and if $\leb(E)=0$ then $\omega_i(E)=0$ for every $i$, hence $\omega(E)=0$. For any finite measurable partition $\{F_k\}$ of $[0,1]$, choosing signs $\sigma_k\in\{\pm 1\}$ with $\sigma_k\omega(F_k)=|\omega(F_k)|$ and using linearity and property (iv) of Definition \ref{def:banachlimit},
		\[\sum_k|\omega(F_k)|=\blim_i\sum_k\sigma_k\omega_i(F_k)\leq\sup_i\sum_k|\omega_i(F_k)|\leq M,\]
		so $\|\omega\|_{TV}\leq M$. 
		
		Now, to prove the equation, if $f=\sum_kc_k\chi_{E_k}$ is simple, \eqref{blimcommute} holds with equality term by term, again by linearity of $\blim$. For general $f$, fix $\varepsilon>0$ and take a simple $\psi$ with $|f-\psi|\leq\varepsilon$ a.e. on $[0,1]$, as in the integration convention of Section \ref{fam}. Since all the charges involved vanish on Lebesgue null sets, $\bigl|\int f\,d\omega_i-\int\psi\,d\omega_i\bigr|\leq\varepsilon M$ uniformly in $i$, hence $\bigl|\blim\int f\,d\omega_i-\blim\int\psi\,d\omega_i\bigr|\leq\varepsilon M$ by linearity and property (v) of Definition \ref{def:banachlimit}, and likewise $\bigl|\int f\,d\omega-\int\psi\,d\omega\bigr|\leq\varepsilon M$. Since $\int\psi\,d\omega=\blim\int\psi\,d\omega_i$ exactly, the two sides of \eqref{blimcommute} differ by at most $2\varepsilon M$. In view that $\varepsilon>0$ can be taken arbitrary, equality follows.
	\end{proof}
	
	\begin{lemma}\label{lem:limit}
		Let $\{t_i\}\subset(0,1)$ with $t_i\to0^+$, and for each $i$ let a tuple $(\lambda_i, \beta_i, \lambda_{0i}, z_i, \zeta_i)$ satisfying Theorem \ref{pmp} for the auxiliary problem $(P_i)$ on $[t_i,1]$ (Remark \ref{rem:interval}) be extended to $[0,1]$ by \eqref{ext}, by $\zeta_i(E):=\zeta_i(E\cap[t_i,1])$, and by the formula of Theorem \ref{pmp}(c) for $\beta_i$ and normalize it by $\lambda_{0i}+\|z_i\|_{L^1}+\zeta_i([0,1])=1$. Suppose:
		\begin{enumerate}[label=(\roman*)]
			\item $\|\lambda_i\|_{L^\infty}\le C_0$ and $\operatorname{Var}_{[0,1]}(\lambda_i)\le V_0$, uniformly in $i$;
			\item for every $\delta\in(0,1)$, $(z_i)$ is uniformly integrable on $[0,1-\delta]$;
			\item for every $\delta\in(0,1)$, there is $i_\delta$ with $\zeta_i(E)=0$ for every measurable $E\subseteq[0,1-\delta]$ and every $i\ge i_\delta$.
		\end{enumerate}
		Then the following properties hold: 
		\begin{enumerate}[label=(\alph*)]
			\item There is a subsequence, not relabeled, along which $\lambda_{0i}\to\lambda_0\in[0,1]$, $\lambda_i\to\lambda$ pointwise and strongly in $\Lone$ with $\lambda\in\BV{n}$, and $z_i\rightharpoonup z$ weakly in $L^1([0,1-\delta];\R)$, with $z\ge0$, for every $\delta\in (0,1)$ .
			\item For any Banach limit, define the charge $\omega(E):=\blim\bigl(\int_Ez_i\,d\leb+\zeta_i(E)\bigr)$, where the Banach limit is taken over the subsequence that guarantees the properties in item (a). Then, the Yosida--Hewitt decomposition of $\omega$ is $d\omega=z\,d\leb+d\zeta$ with $z$ from item (a) and $\zeta$ a pure charge that is left-concentrated at $t=1$ or vanishes. Moreover, setting $\beta$ as in Theorem \ref{pmp}(c) using this $\zeta$, the tuple $(\lambda,\beta,\lambda_0,z,\zeta)$ satisfies conditions (a)--(d) of Theorem \ref{pmp} for $(\bar x,\bar u)$ on $[0,1]$ and satisfies the total mass identity $\lambda_0+\|z\|_{L^1}+\zeta([0,1])=1$.
		\end{enumerate}
	\end{lemma}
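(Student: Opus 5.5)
Item (a) follows from the compactness tools already used in the proofs of Theorems \ref{t1} and \ref{t2}, applied one after another. The normalization gives $\lambda_{0i}\in[0,1]$, and Bolzano--Weierstrass yields a convergent subsequence. Hypothesis (i) and Helly's selection theorem give pointwise convergence of a further subsequence to some $\lambda\in\BV{n}$; since $|\lambda_i|\le C_0$, dominated convergence upgrades this to strong convergence in $\Lone[n]$. For the densities, we take $\delta_k=1/k$ and use hypothesis (ii) together with the bound $\|z_i\|_{L^1}\le 1$. Dunford--Pettis then gives weak convergence in $L^1([0,1-\delta_k])$ for each $k$, and a diagonal extraction produces a single subsequence. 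The limits on the nested intervals agree, so they define one $z\ge 0$ on $[0,1)$. By weak lower semicontinuity, $\int_0^{1-\delta}z\le 1$ for every $\delta$, hence $z\in\Lone$.

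For item (b), Lemma \ref{lem:blimcommute} shows that $\omega$ is a positive charge in $\ba$ with $\omega([0,1])=\blim 1=1$. The main step is to identify its Yosida--Hewitt decomposition. For $E\subseteq[0,1-\delta]$, hypothesis (iii) removes $\zeta_i(E)$ for large $i$. Weak convergence then gives $\omega(E)=\lim\int_E z_i=\int_E z$, because Banach limits agree with ordinary limits on convergent sequences. Consequently, $\zeta:=\omega-z\,d\leb$ vanishes on every measurable subset of $[0,1-\delta]$, for every $\delta$. We show $\zeta\ge0$ by computing, for arbitrary $E$,
\[
\zeta(E)=\lim_{\delta\to0}\bigl(\omega(E\cap(1-\delta,1])-\textstyle\int_{E\cap(1-\delta,1]}z\bigr),
\]
where the $z$-term tends to $0$ and $\omega\ge0$. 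Purity follows as in Proposition \ref{prop:Emult}: a measure $0\le\mu\le\zeta$ vanishes on every $[0,1-1/k]$ and on $\{1\}$, so countable additivity forces $\mu=0$. By uniqueness of the decomposition \eqref{YH}, $d\omega=z\,d\leb+d\zeta$. Moreover, either $\zeta=0$ or $\zeta((1-\varepsilon,1))=\zeta([0,1])$ for every $\varepsilon>0$, which is left-concentration at $1$. The mass identity is then $1=\omega([0,1])=\blim(\lambda_{0i}+\cdots)$ evaluated at $\mathbf 1$, combined with $\blim\lambda_{0i}=\lambda_0$.

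Verifying conditions (a)--(d) of Theorem \ref{pmp} is where I expect the real difficulty, because $z_i$ converges only locally while $\omega$ carries the escaping mass at $1$. Nontriviality follows from the mass identity. Complementary slackness for $z$ follows as in Step 5(c) of the proof of Theorem \ref{t1}, applied on each $[0,1-\delta]$. For $\zeta$, each set $\{|\bar g|>\varepsilon\}$ is $\zeta_i$-null and carries $\int z_i=0$; Lemma \ref{lem:blimcommute} then gives $\omega=0$ there, and positivity gives $\zeta=0$ there. Charge stationarity follows the same way: for each $E$, $\int_E\bar g_u\,d\omega=\blim(\int_E\bar g_u z_i+\int_E\bar g_u\,d\zeta_i)$. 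Here the $\zeta_i$-term vanishes, but the $z_i$-term need not vanish. To handle it, I would test the first stationarity condition of the truncated problems against $\phi\chi_E$ and pass to the Banach limit using Lemma \ref{lem:blimcommute}, the strong convergence of $\lambda_i$, and the $O(t_i)$ estimate on $[0,t_i)$. This yields $\int_E(\lambda^T\bar f_u+\lambda_0\bar L_u)\phi\,dt+\int_E\bar g_u\phi\,d\omega=0$. Splitting $\omega$ into its parts $z\,d\leb$ and $\zeta$ and restricting to $E\subseteq[0,1-\delta]$ gives the first stationarity condition a.e. on each $[0,1-\delta]$, hence a.e. on $[0,1]$. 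Subtracting it leaves $\int_E\bar g_u\phi\,d\zeta=0$.

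The costate equation is handled in the same way. Integrate the costate equation of each auxiliary problem against a test function $y\in\AC{n}$ with $y(0)=0$, as in Lemma \ref{lem:betacharge}, so that the $\beta_i$-term becomes $-\int\bar g_x y\,d\zeta_i$. The Banach limit of $\int\bar g_x y\,z_i\,dt+\int\bar g_x y\,d\zeta_i$ equals $\int\bar g_x y\,d\omega$, which splits into the $z$ and $\zeta$ contributions. Choosing $y$ to localize, and using Lemma \ref{lem:betacharge} in reverse, identifies $\lambda+\beta$ with the absolutely continuous $p$ whose derivative is $-(\bar f_x^T\lambda+\lambda_0\bar L_x^T+\bar g_x^Tz)$ and with $p(1)=0$. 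Here $\beta$ is built from $\zeta$ by the formula of Theorem \ref{pmp}(c), and the transversality condition $\lambda(1)=\beta(1)=0$ is inherited pointwise from the $\lambda_i$. Justifying this identification of $\lambda+\beta$ through the charge integral is the step I regard as the main obstacle.
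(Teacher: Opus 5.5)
Your argument is correct in substance and shares the paper's skeleton. Part (a) uses Helly, Bolzano--Weierstrass and a diagonal Dunford--Pettis extraction. Part (b) uses the Banach-limit charge $\omega$ of Lemma \ref{lem:blimcommute}, with slackness from $\omega=0$ on $\{|\bar g|>\varepsilon\}$ and positivity. Three steps differ from the paper.

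\textbf{Pure part.} You build it as $\zeta:=\omega-z\,d\leb$, check positivity and purity, and then invoke uniqueness in \eqref{YH}. The paper starts from the Yosida--Hewitt decomposition of $\omega$ and leaves implicit that its density is the weak limit $z$; your version makes that identification explicit.

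\textbf{Stationarity.} You localize to $[0,1-\delta]$, where $\zeta$ vanishes, obtain the first condition a.e., and subtract to get the charge condition. The paper derives \eqref{statcharge} and uses that a countably additive measure equal to a pure charge must vanish. Both work.

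\textbf{Costate.} You test against $y$ with $y(0)=0$ and use \eqref{betaid}. The paper works pointwise: at a.e.\ $t$ it applies Lemma \ref{lem:blimcommute} with $f=\bar g_x^T\chi_{(t,1]}$, together with the a.e.\ identification of $\beta$ in Lemma \ref{lem:betacharge}, at points good for $\zeta$ and every $\zeta_i$ simultaneously. The step you call the main obstacle closes at once. After the Banach limit and \eqref{betaid} for the limit $\zeta$, you have
\[
\int_0^1(\lambda+\beta)^T\dot y\,dt=\int_0^1\bigl(\bar f_x^T\lambda+\lambda_0\bar L_x^T+\bar g_x^Tz\bigr)^Ty\,dt=\int_0^1p^T\dot y\,dt,
\]
the last equality by parts since $p(1)=0=y(0)$. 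Taking $y(t)=\int_0^t\phi$ with arbitrary $\phi\in\Linf[n]$ then gives $\lambda+\beta=p$ a.e. For the auxiliary problems, the constant extension on $[0,t_i)$ and the boundary term at $t_i$ contribute only errors of order $t_i$ and $|y(t_i)|$. These vanish because $y(0)=0$ and $\lambda_i,\beta_i,p_i$ are uniformly bounded. Your route trades the paper's bookkeeping of good points for this short duality step.

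\textbf{One slip to correct.} You write $\omega([0,1])=1$, but in fact
\[
\omega([0,1])=\blim\bigl(\|z_i\|_{L^1}+\zeta_i([0,1])\bigr)=\blim(1-\lambda_{0i})=1-\lambda_0;
\]
Lemma \ref{lem:blimcommute} only gives $\omega([0,1])\le 1$. The mass identity is $\lambda_0+\omega([0,1])=\blim\bigl(\lambda_{0i}+\omega_i([0,1])\bigr)=1$, combined with $\omega([0,1])=\|z\|_{L^1}+\zeta([0,1])$. As you wrote it, your two statements are consistent only if $\lambda_0=0$. That is exactly what must be avoided in the application to Theorem \ref{t5}, where $\lambda_0>0$ is the point.
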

	
	\begin{proof}
		The proof of (a) follows by classical results. By (i) and Helly's selection theorem a subsequence of $(\lambda_i)$ converges pointwise to some $\lambda\in\BV{n}$ with $\operatorname{Var}_{[0,1]}(\lambda)\le V_0$; dominated convergence upgrades this to $\lambda_i\to\lambda$ in $\Lone$. By Bolzano--Weierstrass we take a further subsequence so that $\lambda_{0i}\to\lambda_0\in[0,1]$. By (ii) and the Dunford--Pettis theorem, a diagonal extraction over $\delta=1/k$ gives $z_i\rightharpoonup z$ weakly in $L^1([0,1-\delta];\R)$ for every $\delta$, with the same $z\in\Lone$, $z\ge0$ a.e. for any $\delta$, and $\int_0^{1-\delta}z\,dt=\lim\int_0^{1-\delta}z_i\,dt\le1$.
		
		To prove (b), we begin by defining $d\omega_i:=z_i\,d\leb+d\zeta_i$. We have $\omega_i([0,1])=\|z_i\|_{L^1}+\zeta_i([0,1])\le1$, so Lemma \ref{lem:blimcommute} applies: $\omega\in\ba$ is positive with $\omega([0,1])\le1$ and, for the Yosida--Hewitt decomposition $d\omega=z\,d\leb+d\zeta$, $z$ and $\zeta$ are nonnegative. For any $\delta\in(0,1)$, measurable $E\subseteq[0,1-\delta]$, and $i\ge i_\delta$, (iii) gives $\zeta_i(E)=0$, so $\omega(E)=\blim\int_Ez_i\,dt=\int_Ez\,dt$, the last equality by weak $L^1$ convergence on $[0,1-\delta]$ tested against $\chi_E\in\Linf$. Thus $\omega$ restricted to $[0,1-\delta]$ is the absolutely continuous charge $z\,d\leb$; its pure part $\zeta$ therefore vanishes on every measurable subset of $[0,1-\delta]$, implying that $\zeta(E)=0$ for every measurable $E\subseteq[0,b]$ with $0<b<1$. Hence, either $\zeta=0$ or $\zeta$ is left-concentrated at $t=1$; i.e. $\zeta\bigl((1-\varepsilon,1)\bigr)=\zeta([0,1])>0$ for every $\varepsilon>0$.
		
		Now, let us prove that items (a)--(d) of Theorem \ref{pmp} hold:
		
		\emph{Unitary mass.} By linearity of $\blim$ on the constant-$1$ normalized masses, $\lambda_0+\omega([0,1])=\blim\bigl(\lambda_{0i}+\|z_i\|_{L^1}+\zeta_i([0,1])\bigr)=\blim 1=1$, and $\omega([0,1])=\|z\|_{L^1}+\zeta([0,1])$; hence $\lambda_0+\|z\|_{L^1}+\zeta([0,1])=1$, which is condition (a).
		
		\emph{Costate equation.} By Step 1 of the proof of Theorem \ref{t4}, applied on $[t_i,1]$, the sum $\lambda_i+\beta_i$ coincides a.e. with the absolutely continuous function
		\[p_i(t)=\int_t^1\bigl(\bar f_x(s)^T\lambda_i(s)+\lambda_{0i}\bar L_x(s)^T+\bar g_x(s)^Tz_i(s)\bigr)\,ds,\qquad p_i(1)=0.\]
		Let $t\in(0,1)$ be a point at which the identification of Lemma \ref{lem:betacharge} holds simultaneously for $\zeta$ and for every $\zeta_i$; almost every $t$ has this property, since the exceptional set is a countable union of countable sets. For every $i$ with $t_i<t$ we may then substitute $\beta_i(t)=-\int_{(t,1]}\bar g_x^T\,d\zeta_i$ into $\lambda_i(t)+\beta_i(t)=p_i(t)$ and, recalling that $\zeta_i$ vanishes on Lebesgue null sets, gather the two contributions of $z_i$ and $\zeta_i$ into the single charge $\omega_i$:
		\begin{equation}\label{costatecharge}
			\lambda_i(t)=\int_t^1\bigl(\bar f_x(s)^T\lambda_i(s)+\lambda_{0i}\bar L_x(s)^T\bigr)\,ds+\int_{(t,1]}\bar g_x(s)^T\,d\omega_i .
		\end{equation}
		We now apply $\blim$ to \eqref{costatecharge}. On the left, $\lambda_i(t)\to\lambda(t)$ by (a), so the Banach limit is the ordinary limit $\lambda(t)$; in the first integral on the right, $\lambda_i\to\lambda$ in $\Lone[n]$ with $\bar f_x\in\Linf$ and $\lambda_{0i}\to\lambda_0$, so that term converges as well; and the last integral obeys $\blim\int_{(t,1]}\bar g_x^T\,d\omega_i=\int_{(t,1]}\bar g_x^T\,d\omega$ by Lemma \ref{lem:blimcommute} applied with $f=\bar g_x^T\chi_{(t,1]}\in\Linf[n]$. Splitting $d\omega=z\,d\leb+d\zeta$ in the resulting identity and invoking Lemma \ref{lem:betacharge} once more, this time for $\zeta$,
		\[\int_{(t,1]}\bar g_x^T\,d\omega=\int_t^1\bar g_x(s)^Tz(s)\,ds+\int_{(t,1]}\bar g_x(s)^T\,d\zeta=\int_t^1\bar g_x(s)^Tz(s)\,ds-\beta(t),\]
		hence
		\[(\lambda+\beta)(t)=p(t):=\int_t^1\bigl(\bar f_x(s)^T\lambda(s)+\lambda_0\bar L_x(s)^T+\bar g_x(s)^Tz(s)\bigr)\,ds\qquad\text{for a.e. }t\in(0,1),\]
		with $p(1)=0$. Thus, $\lambda+\beta$ coincides a.e. with the absolutely continuous function $p$, which satisfies $-\dot p=\bar f_x^T\lambda+\lambda_0\bar L_x^T+\bar g_x^Tz$ a.e.; this is the costate equation of Theorem \ref{pmp}(c). As for the transversality conditions, $\lambda(1)=0$ because $\lambda_i(1)=0$ for every $i$ and $\lambda_i\to\lambda$ pointwise, while $\beta(1)=0$ by the formula defining $\beta$.
		
		\emph{Stationarity.} Both conditions in Theorem \ref{pmp}(d) follow from a single identity for the limit charge $\omega$, namely
		\begin{equation}\label{statcharge}
			\int_E\bar g_u(t)\,d\omega=-\int_E\bigl(\lambda(t)^T\bar f_u(t)+\lambda_0\bar L_u(t)\bigr)\,dt\qquad\text{for every }E\in\mathfrak L .
		\end{equation}
		To prove \eqref{statcharge}, fix $E\in\mathfrak L$ and $i$. The charge form of the second stationarity condition on $[t_i,1]$, together with the extension $\zeta_i(\,\cdot\,)=\zeta_i(\,\cdot\cap[t_i,1])$, gives $\int_E\bar g_u\,d\zeta_i=\int_{E\cap[t_i,1]}\bar g_u\,d\zeta_i=0$, while the first stationarity condition \eqref{scu} on $[t_i,1]$ reads $z_i\bar g_u=-\bigl(\lambda_i^T\bar f_u+\lambda_{0i}\bar L_u\bigr)$ a.e. there. Since moreover $z_i=0$ on $[0,t_i)$,
		\[\int_E\bar g_u\,d\omega_i=\int_E\bar g_u z_i\,dt+\int_E\bar g_u\,d\zeta_i=-\int_{E\cap[t_i,1]}\bigl(\lambda_i^T\bar f_u+\lambda_{0i}\bar L_u\bigr)\,dt=-\int_E\bigl(\lambda_i^T\bar f_u+\lambda_{0i}\bar L_u\bigr)\,dt+r_i,\]
		with $|r_i|\leq\bigl(C_0\|\bar f_u\|_{L^\infty}+\|\bar L_u\|_{L^\infty}\bigr)t_i\to0$ by (i) and $\lambda_{0i}\leq1$; applying $\blim$ yields \eqref{statcharge}: on the left, Lemma \ref{lem:blimcommute} with $f=\bar g_u\chi_E\in\Linf[m]$ gives $\blim\int_E\bar g_u\,d\omega_i=\int_E\bar g_u\,d\omega$ on the right, $\lambda_i\to\lambda$ in $\Lone[n]$, $\bar f_u\in\Linf$ and $\lambda_{0i}\to\lambda_0$ make the sequence convergent, so that its Banach limit is its ordinary limit.
		
		Taking the Yosida--Hewitt decomposition $d\omega=zdm+d\zeta$ as in the proof of the costate equation above, we obtain from \eqref{statcharge}
		\[\int_E\bigl(\lambda(t)^T\bar f_u(t)+\lambda_0\bar L_u(t)+z(t)\bar g_u(t)\bigr)\,dt=-\int_E\bar g_u(t)\,d\zeta\qquad\text{for all }\  E\in\mathfrak{L}.\]
		Thus, we can equate the total variation of the \emph{measure} on the left hand-side with the total variation of the \emph{pure charge} on the right hand-side:
		\[
		\bigl|\lambda(t)^T\bar f_u(t)+\lambda_0\bar L_u(t)+z(t)\bar g_u(t)\bigr|\,dt=|\bar g_u(t)|\,d\zeta.
		\]
		By a straightforward consequence of the definition of a pure charge (see Definition \ref{charges}(iv)), this is only possible if both are the null measure, yielding exactly both stationarity conditions in Theorem~\ref{pmp}(d).
		
		\emph{Complementary slackness.} Fix $\varepsilon>0$ and let $E\subseteq\{|\bar g|>\varepsilon\}$ be measurable. For every $i$, the two slackness conditions on $[t_i,1]$ give $z_i=0$ a.e. on $\{\bar g\neq0\}$ and $\bar g=0$ $\zeta_i$-a.e., the latter meaning precisely $\zeta_i(\{|\bar g|>\varepsilon\})=0$ for all $\varepsilon$ by Definition \ref{charges}(vi) and the positivity of $\zeta_i$; consequently
		\[\omega_i(E)=\int_Ez_i\,dt+\zeta_i(E)=0\qquad\text{for every }i,\qquad\text{hence}\qquad \omega(E)=\blim\omega_i(E)=0 .\]
		Since $d\omega=z\,d\leb+d\zeta$ with $z\geq0$ a.e. and $\zeta\geq0$, both summands must vanish: $\int_Ez\,dt=0$ and $\zeta(E)=0$ for every measurable $E\subseteq\{|\bar g|>\varepsilon\}$. The first, applied with $E=\{|\bar g|>\varepsilon\}$ and $\varepsilon\to0^+$, gives $z=0$ a.e. on $\{\bar g\neq0\}$, that is, $z\,\bar g=0$ a.e.; the second gives $\zeta(\{|\bar g|>\varepsilon\})=0$ for every $\varepsilon>0$, that is, $\bar g=0$ $\zeta$-a.e. This is condition (b), completing the verification.
	\end{proof}
	
	Now, we proceed with the proof of Theorem \ref{t5}:
	
	\begin{proof}[Proof of Theorem \ref{t5}]
		Choose $\{t_i\}\subset(0,1)$ with $t_i\to0^+$ and take the auxiliary problems $(P_i)$ on $[t_i,1]$ exactly as in the proof of Theorem \ref{t1}. The restriction of $(\bar x,\bar u)$ to $[t_i,1]$ is a weak local minimizer for $(P_i)$, so Theorem \ref{pmp} (Remark \ref{rem:interval}) furnishes multipliers $(\lambda_i,\beta_i,\lambda_{0i},z_i,\zeta_i)$; unlike in Theorems \ref{t1}--\ref{t2}, the terminal instant is nonregular and $\zeta_i$ need not vanish.
		
		\emph{Step 1: Concentration of the auxiliary charges.} Fix $\delta\in(0,1)$. By \textbf{(R-mid)}, the restriction of $(\bar x,\bar u)$ to $[t_i,1-\delta]\subset(0,1)$ is regular for every $i$ with $t_i<1-\delta$; the transcription of Lemma \ref{lem:chargesupport} to $[t_i,1]$ then gives $\zeta_i(E)=0$ for every measurable $E\subseteq[t_i,1-\delta]$. After the extension by $\zeta_i(E)=\zeta_i(E\cap[t_i,1])$, hypothesis (iii) of Lemma \ref{lem:limit} holds.
		
		\emph{Step 2: Uniform bounds.} Extend and normalize the multipliers as in
		Lemma \ref{lem:limit}, so that $\lambda_{0i}+\|z_i\|_{L^1}+\zeta_i([0,1])=1$;
		in particular each of the three terms is bounded by $1$. By Step 1 of the proof
		of Theorem \ref{t4}, applied on $[t_i,1]$, we have $\lambda_i=p_i-\beta_i$ there,
		with
		\[p_i(t)=\int_t^1\bigl(\bar f_x^T\lambda_i+\lambda_{0i}\bar L_x^T
		+\bar g_x^Tz_i\bigr)\,ds,\qquad
		\beta_i(t)=-\int_{(t,1]}\bar g_x^T\,d\zeta_i .\]
		Since $|\beta_i(t)|\le\|\bar g_x\|_{L^\infty}\zeta_i([0,1])$, writing
		$a:=\|\bar f_x\|_{L^\infty}$ and
		$c:=\max\{\|\bar L_x\|_{L^\infty},\|\bar g_x\|_{L^\infty}\}$, the normalization
		gives
		\[|\lambda_i(t)|\le\|\bar L_x\|_{L^\infty}\lambda_{0i}
		+\|\bar g_x\|_{L^\infty}\bigl(\|z_i\|_{L^1}+\zeta_i([0,1])\bigr)
		+a\int_t^1|\lambda_i|\,ds\ \le\ c+a\int_t^1|\lambda_i|\,ds,\]
		so Gr\"onwall's inequality yields $\sup_{[t_i,1]}|\lambda_i|\le c\,e^{a}=:C_0$;
		as the extension \eqref{ext} is constant on $[0,t_i)$ with value
		$\lambda_i(t_i)$, the same bound holds on $[0,1]$.
		
		For the bound on the variation, $\lambda_i$ is
		constant on $[0,t_i)$ and continuous at $t_i$, hence
		\[
		\operatorname{Var}_{[0,1]}(\lambda_i)=\operatorname{Var}_{[t_i,1]}(\lambda_i)
		\le\operatorname{Var}_{[t_i,1]}(p_i)+\operatorname{Var}_{[t_i,1]}(\beta_i).
		\] 
		The first term is bounded by $\int_{t_i}^1|\dot p_i|\,dt\le a\,C_0
		+\|\bar L_x\|_{L^\infty}\lambda_{0i}+\|\bar g_x\|_{L^\infty}\|z_i\|_{L^1}$ on account of the absolute continuity of $p$. For
		the second, the finite additivity of $\zeta_i$ gives
		$\beta_i(s_k)-\beta_i(s_{k-1})=\int_{(s_{k-1},s_k]}\bar g_x^T\,d\zeta_i$ for
		every partition $t_i=s_0<\cdots<s_N=1$, so that
		\[\sum_{k=1}^N\bigl|\beta_i(s_k)-\beta_i(s_{k-1})\bigr|
		\le\|\bar g_x\|_{L^\infty}\sum_{k=1}^N\zeta_i\bigl((s_{k-1},s_k]\bigr)
		=\|\bar g_x\|_{L^\infty}\,\zeta_i([0,1]).\]
		Adding the two estimates and using the normalization once more,
		$\operatorname{Var}_{[0,1]}(\lambda_i)\le a\,C_0+c=:V_0$, a bound independent of
		$i$; this is hypothesis (i). For (ii), on $[0,\varepsilon_0]$, as in the derivation of the majorant in of the proof of Theorem \ref{t2}, using \textbf{(C2-loc)} and complementary slackness, yields the fixed majorant $0\le z_i(t)\le K\,\omega(t)^{-1}=:h(t)$ a.e., with $h\in L^1([0,\varepsilon_0];\R)$; while on $[\varepsilon_0,1-\delta]$, we bound $z_i$ by $0\le z_i\le K\|v_\delta\|_{L^\infty}$ where $v_\delta$ is the functions whose existence is guaranteed by \textbf{(R-mid)} for the interval $[\varepsilon_0, 1-\delta]$. Hence $(z_i)$ is uniformly integrable on $[0,1-\delta]$ for every $\delta$, which is hypothesis (ii).
		
		\emph{Step 3: The limit tuple.} By Lemma \ref{lem:limit} there is a tuple $(\lambda,\beta,\lambda_0,z,\zeta)$ satisfying all conditions of Theorem \ref{pmp} on $[0,1]$, with total mass $1$, whose pure charge $\zeta$ is concentrated to $t=1$; that is, $\zeta(E)=0$ for every measurable $E\subseteq[0,b]$, $b<1$.
		
		\emph{Step 4: Normality via \textbf{(C4$\setminus$0)}.} Let $(y,v)$ and $\varepsilon,\delta_\eta$ be as in \textbf{(C4$\setminus$0)}, and set $w:=\bar g_xy+\bar g_uv$. Because the limit tuple satisfies Theorem \ref{pmp}, the identity \eqref{fundidch}, whose derivation in the proof of Theorem \ref{t4} uses only the conditions of Theorem \ref{pmp} and the pair $(y,v)$, applies:
		\[\lambda_0\int_0^1\bigl(\bar L_xy+\bar L_uv\bigr)\,dt+\int_0^1z\,w\,dt+\int_{0}^1w\,d\zeta=0.\]
		Fix $\eta\in(0,1)$. By complementary slackness $z$ vanishes a.e.\ off $\{\bar g=0\}\subseteq A_\varepsilon$, where $w\le0$; hence $\int_0^\eta z\,w\,dt\le0$, and on $[\eta,1]$, where $w\le-\delta_\eta$ a.e.\ on $A_\varepsilon$, $\int_\eta^1z\,w\,dt\le-\delta_\eta\int_{\eta}^1z\,dt$. As in Step 3 of the proof of Theorem \ref{t4}, $\zeta$ is carried by $A_\varepsilon$ off a Lebesgue (hence $\zeta$) null set, and being concentrated at $t=1\in[\eta,1]$ it satisfies $\int_{0}^1w\,d\zeta\le-\delta_\eta\,\zeta([0,1])$. Substituting,
		\begin{align*}
			\delta_\eta\left(\zeta([0,1])+\int_{\eta}^1z\,dt\right)&\le-\int_0^\eta z\,w\,dt-\int_\eta^1z\,w\,dt-\int_{0}^1w\,d\zeta \\
			&=\lambda_0\int_0^1\bigl(\bar L_xy+\bar L_uv\bigr)\,dt \\
			&\le\lambda_0\int_0^1\bigl|\bar L_xy+\bar L_uv\bigr|\,dt,
		\end{align*}
		which is \eqref{massboundmixed}. If $\lambda_0=0$, this forces $\zeta([0,1])=0$ and $\int_{\eta}^1z\,dt=0$ for every $\eta$, hence $z=0$ a.e.\ on $[0,1]$; together with $\zeta=0$ the total mass vanishes, contradicting $\lambda_0+\|z\|_{L^1}+\zeta([0,1])=1$ from Step 3. Therefore $\lambda_0>0$, completing the proof.
	\end{proof}
	
	\section{Conclusion}\label{sec:conclusion}
	
	We have studied the degeneracy phenomenon for optimal control problems with a single mixed inequality constraints that fails to be regular at an isolated instant, working within the nonregular maximum principle of \cite{be2021}, where purely finitely additive set functions appear as multipliers. The analysis proceeded in two stages.
	
	First, through the case study of Subsection \ref{subsec:example} we showed that nonregularity at a single instant can render the necessary conditions completely uninformative: every admissible process of the model problem satisfies them with multipliers whose only nonzero component is a pure charge concentrated at the nonregular instant. We further established that this degeneration is intrinsic rather than an artifact of how the conditions are derived: the normalized multipliers of the natural regular approximations admit no weak$^*$ limit in $\Linf^*$, and every generalized limit, in the sense of Banach limits, is again such a degenerate charge (Proposition \ref{prop:limits}). This brought into light the failure of weak$^*$ \emph{sequential} compactness in $\Linf^*$, which we handled by Banach limits.
	
	Second, guided by the mechanism exposed, we proposed verifiable nondegeneracy conditions. Condition \textbf{(C1)}, requiring an integrable direction $v$ with $\bar f_u v=0$ and $\langle\bar g_u,v\rangle\geq 1$ on the active set, guarantees normal, charge-free multipliers (Theorem \ref{t1}). Condition \textbf{(C2)} relaxes it by dropping the requirement $\bar f_u v=0$ and retaining only an integrable monotonous lower bound near the singular time, $|\bar g_u(t)|\geq\omega(t)$ with $\int_0^1\omega^{-1}<\infty$: it secures nondegenerate, charge-free multipliers (Theorem \ref{t2}), although normality may now be lost. This integrability threshold admits Hölder degenerations $|\bar g_u|\geq c\,t^\alpha$ with $\alpha<1$ but excludes the rate $\bar g_u\sim t$, which, as the case study and the example of Section \ref{sec:examples} shows, lies exactly at the onset of complete degeneration. Finally, the inward-pointing condition \textbf{(C3)} upgrades the conclusion of \textbf{(C2)} to normality, $\lambda_0\neq 0$ (Theorem \ref{t3}), so that a genuine maximum principle, free of the cost-multiplier degeneracy, is recovered.

	The terminal instant behaves differently. At a terminal nonregular instant with $\bar g_x(1)\neq 0$, the charge cannot in general be nullified (Example \ref{ex:be43}), and Theorem \ref{t4} secures normality together with a bound on its mass; the terminal balance identity of Proposition \ref{prop:termbalance} characterizes its mass exactly, and shows that at a non-integrable terminal instant the alternative between forced abnormality and a forced charge is decided by the problem data alone. When both endpoints are nonregular at once, the initial instant of the type governed by Theorems \ref{t1}--\ref{t3} and the terminal instant beyond their reach, Theorem \ref{t5} bridges the two regimes, securing a normal tuple whose charge is confined to the terminal instant, with the mass bound \eqref{massboundmixed}; the scalar problem (G$_{\alpha,\gamma}$) of Example \ref{ex:mixed} shows the hypotheses to be realizable and the conclusion to be sharp, existence rather than universality. Two directions remain open. The first is the analogue for a nonregular set accumulating at, or spread over, the interior, where the pure charge becomes genuinely infinite-dimensional and the Banach limit is necessary. The second is the vector-constraint case, in which the scalar lower bound $|\bar g_u|\ge\omega$ of \textbf{(C2)} must be replaced by a bound on the smallest singular value of the Jacobian of the active constraints, the two coinciding precisely when a single constraint is active.
	
	To the best of our knowledge, the results above provide what appears to be the first set of nondegeneracy and normality conditions for control problems with nonregular mixed constraints. Among the factors that trigger the undesirable behaviors are an integrability threshold on $|\bar g_u|^{-1}$ at the nonregular instant, the position of that instant in the interval and, at the terminal instant, a single scalar identity between the problem data.
	
	\bibliographystyle{plain}
	\bibliography{mybib_1}
\end{document}